\documentclass[onecolumn,12pt,lettersize]{IEEEtran}
\usepackage[top=1 in, bottom=1 in, left=0.75 in, right=0.75 in]{geometry}

\usepackage[cmex10]{amsmath} 
\usepackage{amssymb}
\usepackage{tabularx}
\usepackage[sc,osf,noBBpl]{mathpazo}
\usepackage{cite} 
\usepackage[pdftex]{graphicx} 
\usepackage{subcaption}
\usepackage{algorithm}
\usepackage[noend]{algorithmic}
\usepackage{amssymb} 
\usepackage{graphicx}
\usepackage{epstopdf}
\usepackage{enumerate}
\usepackage{stfloats}
\usepackage{tikz}

\usepackage{amsthm} 
\usepackage[hidelinks]{hyperref}

\newtheorem{theorem}{Theorem}
\newtheorem{lemma}{Lemma}

\newtheorem{cor}{Corollary}

\newcommand{\delete}[1]{}

\begin{document}

\title{Steady-State Analysis of Load Balancing with Coxian-$2$ Distributed Service Times}

\author{Xin Liu, Kang Gong and Lei Ying
\IEEEcompsocitemizethanks{\IEEEcompsocthanksitem Xin Liu, Kang Gong and Lei Ying are with the Electrical Engineering and Computer Science Department of
the University of Michigan, Ann Arbor, MI, 48109 USA.
Email: \{xinliuee, kanggong,  leiying\}@umich.edu}
}

\maketitle

\begin{abstract}
This paper studies load balancing for many-server ($N$ servers) systems. Each server has a buffer of size $b-1,$ and can  have at most one job in service and $b-1$ jobs in the buffer. The service time of a job follows the Coxian-2 distribution. We focus on steady-state performance of load balancing policies in the heavy traffic regime such that the normalized load of system is $\lambda = 1 - N^{-\alpha}$ for $0<\alpha<0.5.$ 
We identify a set of policies that achieve asymptotic zero waiting. The set of policies include several classical policies such as join-the-shortest-queue (JSQ), join-the-idle-queue (JIQ), idle-one-first (I1F) and power-of-$d$-choices (Po$d$) with $d=O(N^\alpha\log N)$. 
The proof of the main result is based on Stein's method and state space collapse. A key technical contribution of this paper is the iterative state space collapse approach that leads to a simple generator approximation when applying Stein's method.
\end{abstract}

\section{Introduction}
The convergence of cloud computing and machine learning is transforming society in unprecedented ways, and leading to innovations in autonomous systems, healthcare, bioinformatics, social networks, online and in-store retail industry, and education.  Data centers nowadays continuously process complex queries and machine learning tasks in large server farms, with tens of thousands of networked servers. Many of these queries/tasks are time sensitive such as queries for products on online retail platforms, realtime machine learning tasks such as language translation and virtual reality applications. In fact, the latency cost of a data center can be very high. In 2017, Akamai reported that 100-millisecond delay led to 7\% drop  in sales \cite{Aka_17}. Therefore, it is critical for a data center to process these jobs/queries in a timely fashion, ideally without any delay. This paper focuses on the following critical question: {\em can we achieve almost zero-delay in large-scale data centers?} A critical step for achieving zero-delay is a good load-balancing algorithm that can balance the load across servers and assign an incoming job to an idle server immediately.  
Assuming exponential service times, sufficient conditions under which a load balancing algorithm achieves asymptotic zero-delay have been obtain in \cite{LiuYin_20} for $0<\alpha<0.5$ and in \cite{LiuYin_19} for $0.5\leq \alpha<1.$ The results have also been extended to parallel-jobs \cite{WenWan_20}, multi-server jobs \cite{WanXieHar_21} and jobs with data locality \cite{WenZhoSri_20}. 
This paper considers jobs with Coxian-2 service times and identifies a set of load balancing algorithms that achieve zero waiting at steady-state. While Coxian-$2$ distribution is still a restricted service-time distribution, it has been widely-used in  computer systems (see, e.g. \cite{Alt_85,TelHei_03,OsoHar_03}). In particular, \cite{Alt_85} showed the Coxian-$2$ distribution can well approximate a general distribution by fitting its first three moments when the moments of the general distribution satisfies  $m_3/m_1 \geq \frac{3}{2}(c+1)^2$ and $c\geq 1$, where $m_1$, $m_3$ and $c$ are the first-order moment, third-order moment and the squared coefficient of variation, respectively.  \cite{OsoHar_03} also showed that the Coxian-$2$ distribution can represent a large class of bounded Pareto distributions, which model many real-world job service times in computing and communication systems, including UNIX I/O time and the duration of HTTP and FTP transfers.  

\subsection{Related Work}
Performance analysis of systems with distributed queues is one of the most fundamental and widely-studied problems in queueing theory. Assuming exponential service time, the steady-state performance of various load balancing policies has been analyzed using the mean-field analysis (fluid-limit analysis) or diffusion-limit analysis. Among the most popular policies are: 1) join-the-shortest-queue (JSQ) \cite{EscGam_18,Bra_20}, which routes an incoming job to the least loaded server; 2) join-the-idle-queue (JIQ) \cite{LuXieKli_11,Sto_15}, which routes an incoming job to an idle server if possible and otherwise to a server chosen uniformly at random;  3) idle-one-first (I1F) \cite{GupWal_19}, which routes an incoming job to an idle server if available and otherwise to a server with one job if available. If all servers have at least two jobs, the job is routed to a randomly selected server; and 4)
power-of-$d$-choices (Po$d$) \cite{Mit_96,VveDobKar_96}, which samples $d$ servers uniformly at random and dispatches the job to the least loaded server among the $d$ servers. 
With general service time distributions, performance analysis of load balancing policies with distributed queues is a much more challenging problem, and remains to be an active research area in queueing theory \cite{Har_13}. \cite{Mit_96} proposed a  mean-field model of the Po$d$ policy under gamma service time distributions without proving the convergence of the stochastic system to the mean-field model. \cite{AghLiRam_17, VasMukMaz_17, HelVan_18} proposed a set of PDE models to approximate load balancing polices under general service times and numerically analyzed  key performance metrics (e.g. mean response time).
They proved the convergence of the stochastic systems to the corresponding ODEs or PDEs at process-level (over a finite time interval instead of at steady state). 

To go beyond the process-level and establish steady-state performance with general service times, a key challenge is to prove that the mean-field system (fluid-system) is stable, i.e. the system converges to a unique equilibrium starting from any initial condition. Under non-exponential service time distributions, the proof of stability often relies on a so-called ``monotonicity property'', which requires a partial order of two mean-field systems starting from two initial conditions to be maintained over time. In particular,  letting $x(t, y)$ denote the system state at time $t$ with initial state $y,$ given two initial conditions $y_1 \succ y_2,$ where "$\succ$" is a certain partial order, ``monotonicity'' states that the partial order  $x(t, y_1) \succ x(t, y_2)$ holds for any $t \geq 0.$ 

Monotonicity does hold under several load balancing policies with non-exponential service time distributions that have a decreasing hazard rate (DHR) \cite{BraLuPra_12,Sto_15,FosSto_17}. The hazard rate is defined to be $\frac{f(x)}{1-F(x)},$ where $f(x)$ is the density function of the service time and $F(x)$ is the corresponding cumulative distribution function. With DHR,  \cite{BraLuPra_12} proved the asymptotic independence of queues in the mean-field limit under the  Po$d$ load balancing policy, and 
\cite{Sto_15} proved that JIQ achieves asymptotic delay optimality. \cite{Van_19} proved the global stability of the mean-filed model of load balancing policies (e.g. Po$d$) under hyper-exponential distributions with DHR. The key step in \cite{Van_19} is to represent hyper-exponential distribution by a constrained Coxian distribution, where $\mu_i(1-p_i)$ is decreasing in phase $i$ ($\mu_i$ is the service rate in phase $i$ and $p_i$ is the probability that a job finishing service in phase $i$ and entering phase $i+1$). With the alternative representation, monotonicity holds in a certain partial order and the global stability is established.

When service time distributions do not satisfy DHR,  only few works established the stability of mean-field systems for very limited light-traffic regimes.    For example, \cite{FosSto_17} relaxed DHR assumption in \cite{Sto_15} to any general service distribution but the asymptotic optimality of JIQ only holds when the normalized load $\lambda < 0.5.$
The stability of Po$d$ with any general service time distributions with finite second moment has also been established in \cite{BraLuPra_12} when the load per server the normalized load $\lambda < 1/4.$ 

The Coxian-2 distribution considered in this paper does not necessarily satisfy DHR. Under the Coxian-2 service time distribution, each job has two phases (phase 1 and phase 2). When in service, a job finishes phase 1 with rate $\mu_1;$ and after finishing phase 1, the job leaves the system with probability $1-p$ or enters phase 2 with probability $p.$ If the job enters phase 2, it finishes phase 2 with rate $\mu_2,$ and leaves the system. Consider a simple system with two servers. Assume  the Coxian-2 service time distribution and JSQ is used for load balancing. Consider two different initial conditions for this system as shown in Figure~\ref{fig:non-monoto}, where jobs in phase $1$ are in red color, jobs in phase $2$ are in green color and jobs before processed by the server are in black color.
The state of each server can be represented by its queue length and the expected remaining service time of the job in service. Let $Q^{(i,1)}(t)$ and $Q^{(i,2)}(t)$ denote the queue length of server $i$ at time $t$ with initial condition 1 and 2, respectively, and $T^{(i,1)}(t), T^{(i,2)}(t)\in\left\{\frac{1}{\mu_1}+\frac{p}{\mu_2}, \frac{1}{\mu_2}, 0\right\}$ denote the expected remaining service time of the job in service at server $i$ with initial condition 1 and 2, respectively.
At time $0,$ we have $Q^{(i,1)}(0) \geq Q^{(i,2)}(0)$ and $T^{(i,1)}(0) \geq T^{(i,2)}(0)$ for all $i=1,2.$ During the time period $(0, t_1],$ two jobs arrived and were routed to servers according to JSQ, which resulted in the state shown in Figure~\ref{fig:non-monoto}. Suppose that $(1-p)\mu_1<\mu_2,$ then  at time $t_1,$ we have  $T^{(2,1)}(t_1)=\frac{1}{\mu_2}<T^{(2,2)}(t_1)=\frac{1}{\mu_1}+\frac{p}{\mu_2},$ so the system does not have mononticity. Note the hazard rate of Coxian-2 distribution is $\frac{f(x)}{1-F(x)} = \frac{(1-p)\mu_1 + \mu_2e^{(1+p)\mu_1 x}}{1 + e^{(1+p)\mu_1 x}},$ which is an increasing function for $(1-p)\mu_1<\mu_2,$ therefore, it does not satisfy the DHR property.
\begin{figure}[!htbp]
\begin{center}
\includegraphics[width=3.1in]{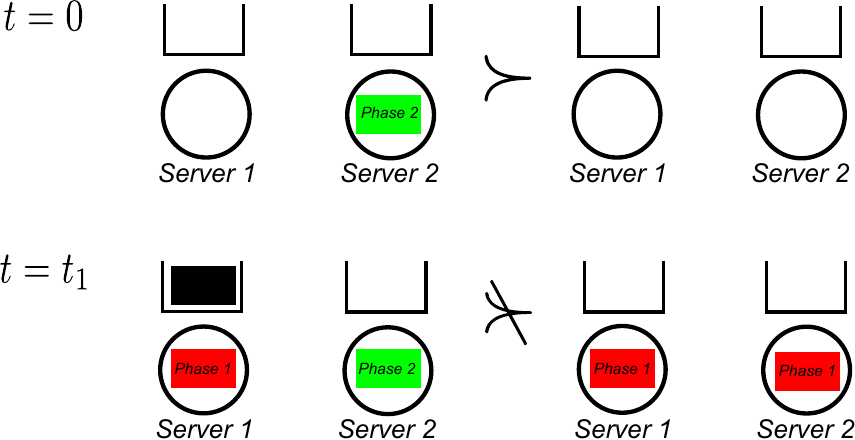}
\end{center}
\caption{~Non-monotocity of JSQ under Coxian-2 distribution.\label{fig:non-monoto}}
\end{figure}

\subsection{Main Contributions}
In this paper, we analyze the steady-state performance of many server systems assuming Coxian service time distributions and heavy traffic regimes ($\lambda = 1 - N^{-\alpha}$ for $0 < \alpha < 0.5$).   From the best of our knowledge, this is the first paper that establishes the steady-state performance of general Coxian distributions without DHR in heavy-traffic regimes. In this paper,  we develop an iterative state space collapse (SSC) to show the steady-state ``lives" in a restricted region (with a high probability), in which the original system is coupled with a simple system by Stein's method. With iterative SSC and Stein's method, we are able to establish several key performance metrics at steady state, including the expected queue length, the probability that a job is allocated to a busy server (waiting probability) and the waiting time. The main results include: 
\begin{itemize}
\item For any load balancing policy in a policy set $\Pi$ (the detailed definition is given in \eqref{Piset}), which  includes join-the-shortest-queue (JSQ), join-the-idle-queue (JIQ), idle-one-first (I1F) and power-of-$d$-choices (Po$d$) with $d=O(N^\alpha\log N)$ , the mean queue length is $\lambda + O\left(\frac{\log N}{\sqrt{N}}\right).$ 
\item For JSQ and Po$d$ with $d=O(N^\alpha\log N),$ the waiting probability and the expected waiting time per job are both $O\left(\frac{\log N}{\sqrt{N}}\right).$ 
\item For JIQ and I1F, the waiting probability is $O\left(\frac{1}{N^{0.5-\alpha}\log N}\right).$
\end{itemize}

\section{Model and Main Results}
We consider a many-server system with $N$ homogeneous servers, where job arrival follows a Poisson process with rate $\lambda N$ with $\lambda = 1 - N^{-\alpha}, 0<\alpha <0.5$ and service times follow Coxian-2 distribution  ($\mu_1$, $\mu_2$, $p$)  as shown in Figure~\ref{fig:cox2}, where $\mu_m>0$ is the rate a job finishes phase $m$ when in service and $0\leq p<1$ is the probability that a job enters phase 2 after finishing  phase 1. We assume $\lambda = 1 - N^{-\alpha}$ for ease of exposition. Our results hold for $\lambda = 1 - \beta N^{-\alpha}$ with any constant $\beta >0,$ and the extension is straightforward.
\begin{figure}[!htbp]
  \centering
  \includegraphics[width=1.9in]{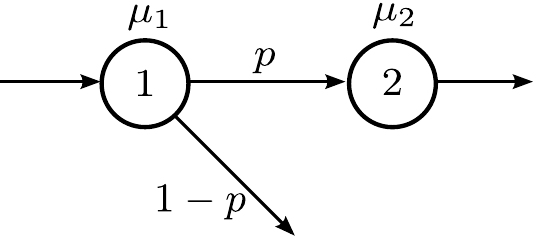}
  \caption{~Coxian-2 distribution.}
  \label{fig:cox2}
\end{figure}

Without loss of generality, we assume the mean service time to be one, i.e. $$\frac{1}{\mu_1}+\frac{p}{\mu_2}=1.$$ 
Under this assumption, $\lambda$ is also the load of the system.

As shown in Figure~\ref{model-coxian2}, an arrival job is colored with black before processed by the server, and colored with red and green when it is in phase $1$ and phase $2$ in service, respectively.
Each server has a buffer of size $b-1,$ so can hold at most $b$ jobs ($b-1$ in the buffer and one in service). 

\begin{figure}[!htbp]
  \centering
  \includegraphics[width=2.5in]{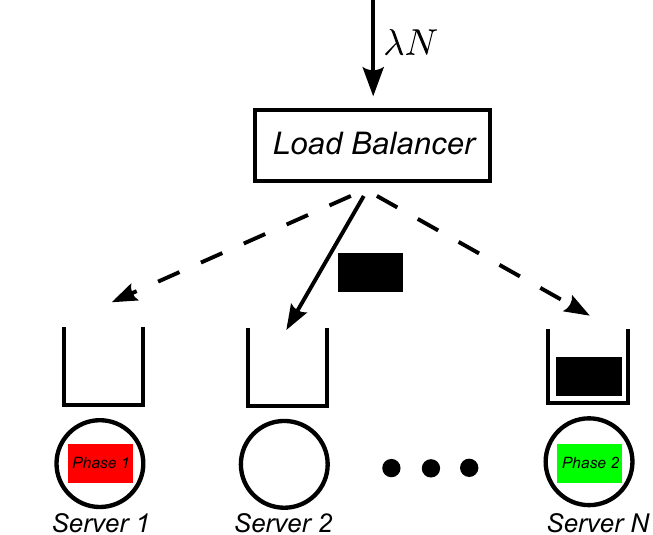}
  \caption{~Load Balancing in Many-Server Systems.}
  \label{model-coxian2}
\end{figure}

Let $Q_{j,m}(t)$ ($m=1,2$) denote the fraction of servers which have $j$ jobs at time $t$ and the one in service is in phase $m$. For convenience, we define $Q_{0,1}(t)$ to be the fraction of servers that are idle at time $t$ and $Q_{0,2}(t)=0.$ Furthermore, define $Q(t)$ to be a $b\times 2$ matrix such that the $(j,m)$th entry of the matrix is $Q_{j,m}(t).$ 
Define $S_{i,m}(t) = \sum_{j \geq i} Q_{j,m}(t)$ and $S_{i}(t) = \sum_{m=1}^2 S_{i,m}(t).$ In other words,  $S_{i,m}(t)$ is the fraction of servers which have at least $i$ jobs and the job in service is in phase $m$ at time $t$ and $S_i(t)$ is the fraction of servers with at least $i$ jobs at time $t.$ Furthermore define $S(t)$ to be a $b\times 2$ matrix such that the $(j,m)$th entry of the matrix is $S_{j,m}(t).$ Note $Q(t)$ and $S(t)$ have an one-to-one mapping.
We consider load balancing policies which dispatch jobs to servers based on $Q(t)$ (or $S(t)$) and under which the finite-state CTMC $\{Q(t), t\geq 0\}$ (or $\{S(t), t\geq 0\}$) is irreducible, and so it has a unique stationary distribution. The load balancing policies include JSQ, JIQ, I1F and Po$d$.

Let $Q_{j,m}$ denote $Q_{j,m}(t)$ at steady state. We further define $S_{i,m}= \sum_{j \geq i} Q_{j,m}$ and $S_{i} = \sum_{m} S_{i,m}.$ In other words,  $S_{i,m}$ is the fraction of servers which have at least $i$ jobs and the job in service is in phase $m$ and $S_i$ is the fraction of servers with at least $i$ jobs at steady state. We illustrate the state representation $S_{i,m}$ in Figure~\ref{model-state} and Table \ref{tab:exa}.
\begin{figure}[!htbp]
  \centering
  \includegraphics[width=5.6in]{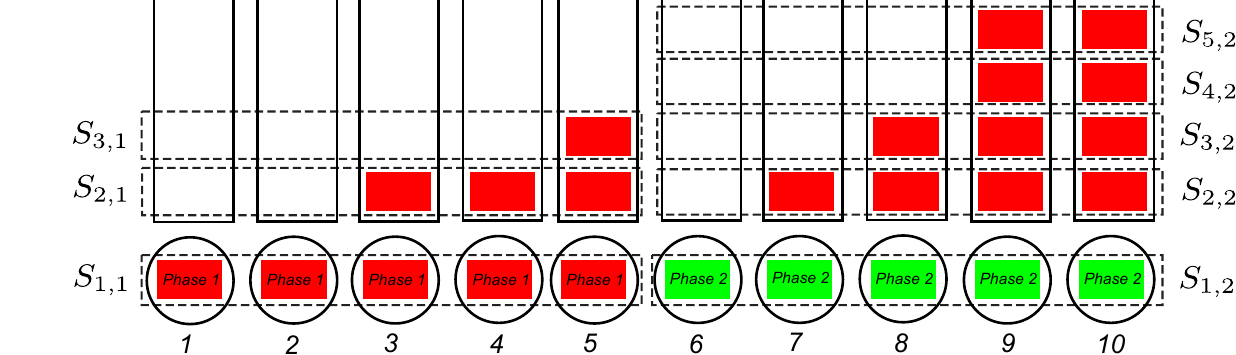}
  \caption{~Illustrations of states $S_{i,m}.$}
  \label{model-state}
\end{figure}

\begin{table}[!htbp]
\centering
\begin{tabular}{lll|lllll}
$Q_{1,1}$               & $Q_{2,1}$               & $Q_{3,1}$                & $Q_{1,2}$               & $Q_{2,2}$               & $Q_{3,2}$               & $Q_{4,2}$               & $Q_{5,2}$               \\ \hline
\multicolumn{1}{c}{0.2} & \multicolumn{1}{c}{0.2} & \multicolumn{1}{c|}{0.1} & \multicolumn{1}{c}{0.1}   & \multicolumn{1}{c}{0.1} & \multicolumn{1}{c}{0.1} & \multicolumn{1}{c}{0}   & \multicolumn{1}{c}{0.2} \\ \hline
$S_{1,1}$               & $S_{2,1}$               & $S_{3,1}$                & $S_{1,2}$               & $S_{2,2}$               & $S_{3,2}$               & $S_{4,2}$               & $S_{5,2}$               \\ \hline
\multicolumn{1}{c}{0.5} & \multicolumn{1}{c}{0.3} & \multicolumn{1}{c|}{0.1} & \multicolumn{1}{c}{0.5} & \multicolumn{1}{c}{0.4} & \multicolumn{1}{c}{0.3} & \multicolumn{1}{c}{0.2} & \multicolumn{1}{c}{0.2}
\end{tabular}
\caption{~Values of $Q_{i,m}$ and $S_{i,m}$ in Figure~\ref{model-state}. \label{tab:exa}}
\end{table}

Define $S$ to be a $b\times 2$ random matrix such that the $(i,m)$th entry is  $S_{i,m}$ and let $s \in \mathbb R^{b\times 2}$ denote a realization of $S.$ Define $\mathcal S^{(N)}$ to be a set of $s$ such that  \begin{align}
\mathcal{S}^{(N)}  = \left\{ s ~\left|~ 1\geq s_{1,m}\geq \cdots \geq s_{b,m}\geq  0, ~ 1\geq \sum_{m=1}^2s_{1,m}; ~ Ns_{i,m}\in \mathbb{N}, ~ \forall i, m\right.\right\} \label{state_S}.
\end{align}

Let $A_1(s)$ denote the probability that an incoming job is routed to a busy server conditioned on that the system is in state $s \in \mathcal{S}^{(N)};$ i.e.  $$A_1(s)= \mathbb P \left(\left. \text{an incoming job is routed to a busy server} \right| {S(t)=s}\right).$$  Among the load balancing policies considered in this paper, define a subset \begin{align}\Pi=\left\{\pi~ \left|~ \text{Under policy}~ \pi, A_1(s)\leq \frac{1}{\sqrt{N}} ~\text{for any}~ s\in{\mathcal{S}^{(N)}} \right.\right. \nonumber\\ \left.\left.~\text{such that}~s_1\leq \lambda+\frac{1+\mu_1+\mu_2}{\min\left\{(1-p)\mu_1,\mu_2\right\}}\frac{\log N}{\sqrt{N}}\right.\right\}. \label{Piset}
\end{align}Our main result of this paper is the following theorem.

\begin{theorem}\label{Thm:main}
Define $w_u=\max\{(1-p)\mu_1,\mu_2\},$ $w_l=\min\{(1-p)\mu_1,\mu_2\},$ $\mu_{\max}=\max\{\mu_1,\mu_2\},$ and $k =\left(1+\frac{w_u b}{w_l}\right)\left(\frac{1+\mu_1+\mu_2}{w_l}+2\mu_1\right).$ Under any load balancing policy in $\Pi,$ the following bound holds 
\begin{align}
\mathbb E\left[\max\left\{\sum_{i=1}^{b} S_i- \lambda -\frac{k\log N}{\sqrt{N}},0\right\}\right] \leq \frac{7\mu_{\max}}{\sqrt{N}\log N}, \label{Thm:main-S}
\end{align}
when $N$ satisfies
\begin{align}
\frac{w_lN^{0.5-\alpha}}{1+\mu_1+\mu_2} \geq \log N \geq \frac{3.5}{\min\left(\frac{\mu_1}{16\mu_{\max}},\frac{\mu_2}{12\mu_{\max}},\frac{\mu_1\mu_2}{40\mu_{\max}}\right)}. \label{N-cond}    
\end{align}
\hfill{$\square$}
\end{theorem}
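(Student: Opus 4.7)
I would combine Stein's method with an iterative state-space collapse (SSC) argument, leveraging the policy hypothesis that $A_1(s)\leq 1/\sqrt{N}$ whenever $s_1$ lies within $\tfrac{1+\mu_1+\mu_2}{w_l}\tfrac{\log N}{\sqrt{N}}$ of $\lambda$. Let $G$ denote the generator of the CTMC $\{S(t)\}$; at stationarity, $\mathbb{E}[GV(S)]=0$ for every bounded $V:\mathcal{S}^{(N)}\to\mathbb{R}$. The plan is to choose Lyapunov-type test functions $V_1,\ldots,V_b$, one per queue-length level, such that $GV_i$ dominates $(S_i-\gamma_i)^+$ on a good set and incurs only negligible error off of it, and then stitch the levels together.

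\textbf{Steps.} First, compute the drift of $S_1$: arrivals accepted by idle servers raise it at rate $\lambda N(1-A_1(S))$, while completions of one-job servers lower it at rate $N((1-p)\mu_1 Q_{1,1}+\mu_2 Q_{1,2})$. Using $Q_{1,m}=S_{1,m}-S_{2,m}$ and the inequality $(1-p)\mu_1 S_{1,1}+\mu_2 S_{1,2}\geq w_l S_1$, and applying $V_1(s)=\bigl(s_1-\lambda-\tfrac{1+\mu_1+\mu_2}{w_l}\tfrac{\log N}{\sqrt{N}}\bigr)^+$, the steady-state identity $\mathbb{E}[GV_1(S)]=0$ combined with the policy condition yields a Level-1 SSC of the form $\mathbb{E}[(S_1-\lambda-c_1\tfrac{\log N}{\sqrt{N}})^+]\leq C_1\mathbb{E}[S_2]+O(\mu_{\max}/(\sqrt{N}\log N))$, where the $\mathbb{E}[S_2]$ term arises from the residual $Q_{1,m}=S_{1,m}-S_{2,m}$. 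Next, iterate upward: for each $i\geq 2$, only arrivals routed to a server with $\geq i-1$ jobs can increase $S_i$, so the upward drift is at most $\lambda N A_1(S)\leq \lambda\sqrt{N}$ on the good set, while completions give downward drift at least $w_l N(S_i-S_{i+1})$. Applying $V_i(s)=(s_i-\gamma_i)^+$ with $\gamma_i$ scaled by $(w_u/w_l)^{i-1}$ gives a recursion $\mathbb{E}[S_i]\leq (w_u/w_l)\mathbb{E}[S_{i+1}]+O(1/\sqrt{N})$, which, iterated from $i=b$ (where $S_{b+1}=0$) down to $i=2$, yields $\mathbb{E}[S_i]=O(1/\sqrt{N})$ for every $i\geq 2$. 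Feeding this back into Level 1, bounding $\sum_{i=1}^b S_i \leq S_1+(b-1)S_2$, and collecting constants produces exactly the factor $k=(1+w_u b/w_l)((1+\mu_1+\mu_2)/w_l+2\mu_1)$ together with the remainder $7\mu_{\max}/(\sqrt{N}\log N)$ asserted in \eqref{Thm:main-S}.

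\textbf{Main obstacle and role of \eqref{N-cond}.} The central difficulty is that Coxian-2 violates monotonicity (Figure~\ref{fig:non-monoto}), so the customary fluid-limit / global-stability route is unavailable. The iterative SSC sidesteps this by operating purely at the drift level, but requires a genuinely joint treatment of the two phases: the transition $p\mu_1$ couples $S_{1,1}$ and $S_{1,2}$, and a separate per-phase analysis would lose the cancellation $\tfrac{1}{\mu_1}+\tfrac{p}{\mu_2}=1$ that pins the equilibrium throughput at $\lambda$; controlling this coupling without an ordering is the delicate step. The hypothesis \eqref{N-cond} enters twice: the upper bound on $\log N$ keeps the good-set threshold $\lambda+\tfrac{1+\mu_1+\mu_2}{w_l}\tfrac{\log N}{\sqrt{N}}$ strictly below $1$ so the SSC region is non-trivial, while the lower bound on $\log N$ ensures that the $O(\mu_{\max}/N)$ second-order jump corrections from Taylor-expanding the $V_i$ are dominated by the linear drift, which is what makes the aggregate Stein error emerge at the claimed order $\mu_{\max}/(\sqrt{N}\log N)$.
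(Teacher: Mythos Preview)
Your outline has the right high-level ingredients---Stein's method plus iterative SSC---but the concrete Steps contain a genuine gap. You work entirely with the phase-aggregated counts $S_i=S_{i,1}+S_{i,2}$ and invoke only the crude bound $(1-p)\mu_1 S_{1,1}+\mu_2 S_{1,2}\geq w_l S_1$. For your Level-1 step to give negative drift of $V_1(s)=(s_1-\text{threshold})^+$, you would need the departure rate $D_1=(1-p)\mu_1 s_{1,1}+\mu_2 s_{1,2}$ (minus a level-2 correction) to exceed $\lambda$ whenever $s_1$ is large; but $w_l s_1\leq w_l=\min\{(1-p)\mu_1,\mu_2\}$ can be far below $\lambda$ (take $\mu_2$ small, or $p$ close to $1$), so the drift of $S_1$ need not be negative no matter how large $s_1$ is. You correctly flag the phase coupling as ``the delicate step'' in your final paragraph, yet your Steps never confront it. There is also a circularity: your $i\geq2$ recursion needs the good-set condition $A_1(S)\leq 1/\sqrt N$, which requires $S_1$ below threshold, which is precisely what Level~1 was supposed to establish. (Separately, the stationary drift identity for $S_i$ yields $w_l\,\mathbb{E}[S_i-S_{i+1}]\leq \lambda\,\mathbb{E}[A_{i-1}]$, so the recursion has coefficient $1$, not $w_u/w_l$; the constant $k$ does not fall out of your scheme.)

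The paper's argument is structurally different. It uses a \emph{single} Stein test function $g\bigl(\sum_i s_i\bigr)=-\tfrac{\sqrt{N}}{2\log N}\bigl(\sum_i s_i-\eta\bigr)^2\mathbb{I}_{\sum_i s_i\geq\eta}$, not one per level, and reduces the dominant term $J_1$ to showing $D_1\geq \lambda+\tfrac{\log N}{\sqrt{N}}$ on a high-probability set $\mathcal S_{ssc}$. The iterative SSC is then carried out on the \emph{phase-resolved} variables: first an \emph{unconditional} upper bound $S_{1,2}\leq p/\mu_2+O(\log N/\sqrt N)$ (Lemma~\ref{tailbound-u:s12}), using only $s_{1,1}\leq 1-s_{1,2}$; this is the step that breaks the circularity, because once $s_{1,2}$ is bounded above and one is in the region where $s_{1,1}$ lies below its equilibrium, the sum $s_1=s_{1,1}+s_{1,2}$ automatically lands in the policy's good set, so $A_1(s)\leq 1/\sqrt N$ can be invoked to push $s_{1,1}$ back up (Lemma~\ref{tailbound-l:s11}) and then $s_{1,2}$ (Lemma~\ref{tailbound-l:s12}). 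A final joint Lyapunov $V(s)=\min\{\eta-s_1,\sum_{i\geq2}s_i\}$ (Lemma~\ref{SSC:s1 s2}) handles $S_1$ and $\sum_{i\geq2}S_i$ simultaneously. With these per-phase lower bounds in hand, a two-point linear program (Lemma~\ref{SSC:s1 s2 negative}) certifies $D_1\geq \lambda+\tfrac{\log N}{\sqrt N}$ on $\mathcal S_{ssc_1}$. All of these are high-probability \emph{tail} bounds obtained via a conditional drift lemma (Lemma~\ref{tail-bound-cond}), not expectation identities of the form $\mathbb{E}[GV_i]=0$; the lower bound on $\log N$ in \eqref{N-cond} is what drives those tail probabilities down to $O(N^{-2})$, rather than controlling a Taylor remainder as you suggest.
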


Note that the condition $A_1(s)\leq \frac{1}{\sqrt{N}}$ for $s$ such that $s_1\leq \lambda+\frac{1+\mu_1+\mu_2}{w_l}\frac{\log N}{\sqrt{N}}$ means that an incoming job is routed to an idle server with probability at least $1-\frac{1}{\sqrt{N}}$ when at least $\frac{1}{N^{\alpha}}-\frac{1+\mu_1+\mu_2}{w_l}\frac{\log N}{\sqrt{N}}$ fraction of servers are idle. 
There are several well-known policies that satisfy this condition.
\begin{itemize}
\item {\bf Join-the-Shortest-Queue (JSQ)}: JSQ routes an incoming job to the least loaded server in the system. Therefore, $A_1(s)=0$ when $s_{1}<1.$

\item {\bf Idle-One-First (I1F)} \cite{GupWal_19}: I1F routes an incoming job to an idle server if available; and otherwise to a server with one job if available. If all servers have at least two jobs, the job is routed to a randomly selected server. Therefore, $A_1(s)=0$ when $s_1<1.$

\item {\bf Join-the-Idle-Queue (JIQ)} \cite{LuXieKli_11}: JIQ routes an incoming job to an idle server if possible and otherwise, routes to a server chosen uniformly at random.
Therefore, $A_1(s)=0$ when $s_1<1.$

\item {\bf Power-of-$d$-Choices (Po$d$)} \cite{Mit_96,VveDobKar_96}:  Po$d$ samples $d$ servers uniformly at random and dispatches the job to the least loaded server among the $d$ servers. Ties are broken uniformly at random. When $d\geq \mu_1N^\alpha\log N,$ $A_1(s)\leq \frac{1}{\sqrt{N}}$ when $s_1\leq \lambda+\frac{1+\mu_1+\mu_2}{w_l}\frac{\log N}{\sqrt{N}}.$
\end{itemize}

A direct consequence of Theorem \ref{Thm:main} is {\it{asymptotic zero waiting} at steady state}.  Let $\mathcal W$ denote the event that an incoming job is routed to a busy server in a system with $N$ servers, and $\mathbb P(\mathcal W)$ denote the probability of this event at steady-state. Let $\mathcal B$ denote the event that an incoming job is blocked (discarded) and $\mathbb P(\mathcal B)$ denote the probability of this event at steady-state. Note that the occurrence of event ${\mathcal B}$ implies the occurrence of event ${\mathcal W}$ because a job is blocked when being routed to a server with $b$ jobs. 
Furthermore, let $W$ denote the waiting time of a job (when the job is not dropped). We have the following results based on the main theorem.

\begin{cor} \label{Thm:zerodelay}
The following results hold when $N$ satisfies condition \eqref{N-cond}.
\begin{itemize}
\item Under JSQ and Po$d$ with $d\geq \mu_1 N^\alpha \log N$ such that $\sqrt{N} \geq \frac{8k\log N}{b-\lambda}+\frac{8b N^{0.5-\alpha}}{(b-\lambda)\mu_1},$ we have
\begin{align} 
\mathbb E\left[W\right]\leq& \frac{2k\log N}{\sqrt{N}} + \frac{14\mu_{\max} + \frac{16\mu_{\max}}{b-\lambda}}{\sqrt{N} \log N}, \label{Pod-zero-delay}\\
\mathbb P(\mathcal W) \leq& \frac{1}{N}+ \frac{\mu_{\max}}{\lambda }\left(\frac{k\log N}{\sqrt{N}} +  \frac{7\mu_{\max} + \frac{8 \mu_{\max}}{b-\lambda}}{\sqrt{N} \log N}\right)\label{Pod-zero-prob}.
\end{align}

\item Under JIQ and I1F such that $N^{0.5-\alpha} \geq 2k\log N,$ \begin{align}\mathbb P(\mathcal W)\leq \frac{14\mu_{\max}}{N^{0.5-\alpha}\log N}.\label{JIQ-zero-waiting}\end{align}
\end{itemize}
\hfill{$\square$}
\end{cor}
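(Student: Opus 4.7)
The plan is to derive Corollary~\ref{Thm:zerodelay} from Theorem~\ref{Thm:main} by combining two applications of Little's law, Markov's inequality applied to the tail of $\sum_{i=1}^b S_i$, and a residual-service-time argument specific to Coxian-$2$.

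First, since $\sum_{i=1}^b S_i\ge 0$, inequality~\eqref{Thm:main-S} immediately yields $\mathbb E[\sum_{i=1}^b S_i]\le \lambda + k\log N/\sqrt N + 7\mu_{\max}/(\sqrt N\log N)$, and Markov's inequality applied to the nonnegative random variable $\bigl(\sum_{i=1}^b S_i-\lambda-k\log N/\sqrt N\bigr)^+$ gives, for any $t>\lambda+k\log N/\sqrt N$,
\[
\mathbb P\!\left(\sum_{i=1}^b S_i\ge t\right)\;\le\;\frac{7\mu_{\max}/(\sqrt N\log N)}{t-\lambda-k\log N/\sqrt N}.
\]
Conservation of in-service jobs gives $\mathbb E[S_1]=\lambda(1-\mathbb P(\mathcal B))$, and Little's law applied to the buffer gives $\lambda(1-\mathbb P(\mathcal B))\,\mathbb E[W]=\mathbb E[\sum_{i=2}^b S_i]$. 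Subtracting yields $\mathbb E[\sum_{i=2}^b S_i]\le k\log N/\sqrt N + 7\mu_{\max}/(\sqrt N\log N) + \lambda\,\mathbb P(\mathcal B)$.

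For JSQ and Po$d$, blocking requires the queried server(s) to be full, which forces $\sum_{i=1}^b S_i\ge b$. The tail bound with $t=b$, together with the hypothesis $\sqrt N\ge 8k\log N/(b-\lambda)+8bN^{0.5-\alpha}/((b-\lambda)\mu_1)$ (which ensures $b-\lambda-k\log N/\sqrt N\ge \tfrac{7}{8}(b-\lambda)$), gives $\mathbb P(\mathcal B)\le 8\mu_{\max}/((b-\lambda)\sqrt N\log N)$. For Po$d$ one additionally exploits $A_b(s)=s_b^d$ with $d\ge \mu_1 N^\alpha\log N$ by splitting on $\{S_b\le 1-(b-\lambda)/(8b)\}$ and using the exponential decay of $\delta^d$, which is dominated by the Markov term because of the secondary term in the hypothesis. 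Substituting back into the Little identity produces~\eqref{Pod-zero-delay}.

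For~\eqref{Pod-zero-prob} I use that, under Coxian-$2$, the conditional expected residual service time of the in-service job is at least $1/\mu_{\max}$, because the two possible residual means are $1/\mu_1+p/\mu_2=1$ and $1/\mu_2$, both $\ge 1/\mu_{\max}$. Hence $\mathbb E[W\mid\mathcal W]\ge 1/\mu_{\max}$ and thus $\mathbb P(\mathcal W)\le \mu_{\max}\mathbb E[W]$, up to an $O(1/N)$ correction absorbing the blocked-arrival contribution and the discreteness of the state grid; combining with the bound on $\mathbb E[\sum_{i=2}^b S_i]$ yields~\eqref{Pod-zero-prob}. For JIQ and I1F, $A_1(s)=0$ whenever $s_1<1$, so PASTA gives $\mathbb P(\mathcal W)=\mathbb P(S_1=1)\le \mathbb P(\sum_{i=1}^b S_i\ge 1)$; the hypothesis $N^{0.5-\alpha}\ge 2k\log N$ forces $1-\lambda-k\log N/\sqrt N\ge N^{-\alpha}/2$, and the tail bound with $t=1$ immediately produces~\eqref{JIQ-zero-waiting}. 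The main obstacle I anticipate is the blocking bound under Po$d$: because $A_b(s)=s_b^d$ decays geometrically in the very large exponent $d\gtrsim N^\alpha\log N$, the crude Markov bound on $\{\sum_i S_i\ge b\}$ alone is loose, and one must split on $\{S_b\le \delta\}$ with $\delta$ close to $1$ and balance $\delta^d$ against the Markov tail; the secondary term $8bN^{0.5-\alpha}/((b-\lambda)\mu_1)$ in the hypothesis is precisely what makes the $\delta^d$ contribution subdominant.
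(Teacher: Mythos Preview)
Your proposal is correct and follows essentially the same route as the paper: Markov's inequality on the tail of $\sum_i S_i$ from Theorem~\ref{Thm:main}, work conservation $\mathbb E[S_1]=\lambda(1-\mathbb P(\mathcal B))$, Little's law on the buffer, and the residual-service-time lower bound $\mathbb E[T_Q]\ge \min\{1/\mu_1,1/\mu_2\}=1/\mu_{\max}$ to pass from $\mathbb E[\sum_{i\ge 2}S_i]$ to $\mathbb P(\mathcal W)$; for JIQ/I1F your PASTA-plus-Markov argument is exactly the paper's. The only notable difference is the Po$d$ blocking split: the paper thresholds at $S_b\le 1-1/(\mu_1 N^\alpha)$ (so that $(1-1/(\mu_1 N^\alpha))^d\le 1/N$ directly from $d\ge \mu_1 N^\alpha\log N$, and the secondary hypothesis term $8bN^{0.5-\alpha}/((b-\lambda)\mu_1)$ is used to keep the Markov denominator $b-\tfrac{b}{\mu_1 N^\alpha}-\lambda-\tfrac{k\log N}{\sqrt N}$ bounded below), whereas you threshold at the constant $1-(b-\lambda)/(8b)$ and use the secondary hypothesis term instead to force $\delta^d\le 1/N$; both choices yield $\mathbb P(\mathcal B)\le 1/N+O\bigl(\tfrac{\mu_{\max}}{(b-\lambda)\sqrt N\log N}\bigr)$, so this is a cosmetic rearrangement rather than a different idea.
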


The proof of this corollary is an application of  Little's law and  Markov's inequality, and can be found in the Section \ref{sec:0-delay}. We  remark that according to  \eqref{Pod-zero-delay} and \eqref{Pod-zero-prob}, asymptotic zero-waiting is achieved under JSQ and Po$d$ when $k=o\left(\frac{\sqrt{N}}{\log N}\right);$  and according to \eqref{JIQ-zero-waiting}, 
 asymptotic zero-waiting is achieved  under JIQ and I1F
when $k=O\left(\frac{N^{0.5-\alpha}}{\log N}\right).$ Since Theorem \ref{Thm:main} assumes  $k=\Theta(b),$ the buffer size has to be $O\left(\frac{N^{0.5-\alpha}}{\log N}\right)$ as well, which results in the finite-buffer assumption in this paper. This finite-buffer assumption, however, is a sufficient condition. It remains open whether such a condition is necessary.

\def\Thmmain{\ref{Thm:main}~}

\section{Proof of Theorem \Thmmain}   \label{sec:proof}
In this section, we present the proof of our main theorem, which is organized along the three key ingredients: generator approximation, gradient bounds, and iterative state space collapse.

\subsection{Generator Approximation}

Define $e_{i,m} \in \mathbb R^{b\times 2}$ to be a $b\times 2$-dimensional matrix such that the $(i,m)$th entry is $1/N$ and all other entries are zero. Furthermore, define $A_{i,m}(s)$ to be the probability that an incoming job is routed to a server with at least $i$ jobs and the job in service in phase $m$, when the system is in state $s$, i.e.
\begin{align*}
A_{i,m}(s)=\Pr\left( \hbox{an incoming job is routed to a server with at least $i$ jobs} \right.\\
\left. \hbox{and the job in service in phase $m$} ~|~{S(t)}=s\right).    
\end{align*}

Given the state $s$ of the CTMC and the corresponding $q,$ the following events trigger a
transition from state $s$.
\begin{itemize}
\item Event 1: A job arrives and is routed to a server that it has $i-1$ jobs and the job in service is in phase $1.$ When this occurs, $q_{i,1}$ increases by $1/N,$ and $q_{i-1,1}$ decreases by $1/N,$ so the CTMC has the following transition: 
\begin{align*}
q \to& ~q+e_{i,1}-e_{i-1,1},\\
s \to& ~s+e_{i,1}.
\end{align*}

This transition occurs with rate $$\lambda N(A_{i-1,1}(s)-A_{i,1}(s)),$$ where $A_{i-1,1}(s)-A_{i,1}(s))$ is the probability that an incoming job is routing to a server with $i-1$ jobs and the job in service in phase $1.$ For example, under JSQ, we have $A_{i-1,1}(s)-A_{i,1}(s))=\frac{q_{i-1,1}}{q_{i-1}}{\mathbb{I}}_{\{s_{i-1}=1, s_i<1\}},$ where $\frac{q_{i-1,1}}{q_{i-1}}$ is the probability that the server which receives the job is serving a job in phase $1$ conditioned on the job is routed to a server with $i-1$ jobs, and $\{s_{i-1}=1, s_i<1\}$ implies that the shortest queue in the system has length $i-1.$

\item Event 2: A job arrives and is routed to a server such that it has $i-1$ jobs and the job in service is in phase $2.$  When this occurs, $q_{i,2}$ increases by $1/N,$ and $q_{i-1,2}$ decreases by $1/N,$ so the CTMC has the following transition: 
\begin{align*}
q \to& ~q+e_{i,2}-e_{i-1,2},\\
s \to& ~s+e_{i,2}.
\end{align*}

This transition occurs with rate $$\lambda N(A_{i-1,2}(s)-A_{i,2}(s)),$$ where $A_{i-1,2}(s)-A_{i,2}(s))$ is the probability that an incoming job is routing to a server with $i-1$ jobs and the job in service in phase $2.$ For example, under JSQ, we have $A_{i-1,2}(s)-A_{i,2}(s))=\frac{q_{i-1,2}}{q_{i-1}}{\mathbb{I}}_{\{s_{i-1}=1, s_i<1\}},$ where $\frac{q_{i-1,2}}{q_{i-1}}$ is the probability that the server which receives the job is serving a job in phase $2$ conditioned on the job is routed to a server with $i-1$ jobs, and $\{s_{i-1}=1, s_i<1\}$ implies that the shortest queue in the system has length $i-1.$

\item Event 3: A server, which has $i$ jobs, finishes  phase 1 of the job in service. The job leaves the system without entering into phase 2. When this occurs, $q_{i,1}$ decreases by $1/N$ and $q_{i-1,1}$ increases by $1/N,$ so the CTMC has the following transition:
\begin{align*}
q \to& ~q-e_{i,1}+e_{i-1,1},\\
s \to& ~s-e_{i,1}.
\end{align*}

This transition occurs with rate $$\mu_1Nq_{i,1}(1-p),$$
where $(1-p)$ is the probability that a job finishes phase 1 and departures without entering phase 2. 

\item Event 4: A server, which has $i$ jobs, finishes phase 1 of the job in service. The job enters phase $2$. 
When this occurs, a server in state $(i,1)$ transits to state $(i,2),$ so $q_{i,1}$ decreases by $1/N$ and $q_{i,2}$ increases by $1/N.$ Therefore, the CTMC has the following transition:
\begin{align*}
q \to& ~q-e_{i,1}+e_{i,2},\\
s \to& ~s-\sum_{j=1}^i e_{j,1} + \sum_{j=1}^i e_{j,2},
\end{align*} where the transition of $s$ can be verified based on the definition $s_{i,m}=\sum_{j\geq i} q_{j,m}$ so $s_{j,1}$ decreases by $1/N$ for any $j\leq i$ and $s_{j,2}$ increases by $1/N$ for any $j\leq i.$ This event occurs with rate $$\mu_1Nq_{i,1}p,$$
where $p$ is the probability that a job enters phase 2 after finishing  phase $1.$ 

\item Event 5: A server, which has $i$ jobs, finishes phase 2 of the job in service. The job leaves the system. When this occurs, $q_{i,2}$ decreases by $1/N$ and $q_{i-1,1}$ increases by $1/N$ (because the server starts a new job in phase 1 {and the event when $i=1$ means the fraction of idle server increase by $1/N$}), so the CTMC has the following transition:
\begin{align*}
q \to& ~q-e_{i,2}+e_{i-1,1},\\
s \to& ~s-\sum_{j=1}^i e_{j,2} + \sum_{j=1}^{i-1} e_{j,1}.
\end{align*}

This transition occurs with rate $$\mu_2Nq_{i,2}.$$
\end{itemize}

We illustrate local state transitions related to state $s$ under JSQ in Figure~ \ref{fig:state-trans}.
\begin{figure}[!htbp]
  \centering
\includegraphics[width=4.7in]{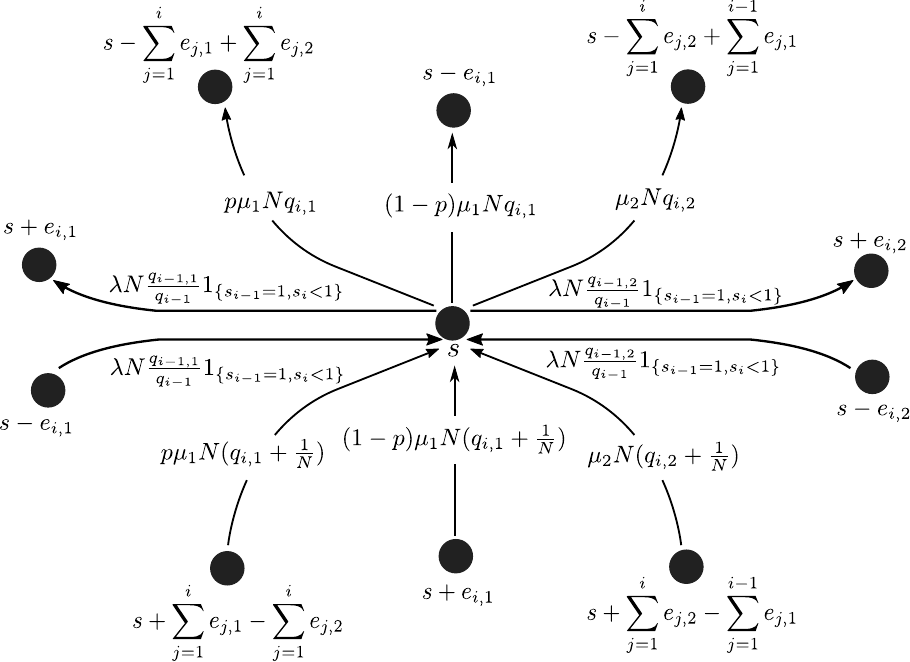} 
  \caption{~Illustrations of state transitions under JSQ for any $i$ with $1 \leq i \leq b.$}
  \label{fig:state-trans}
\end{figure}

Let $G$ be the generator of CTMC ($S(t):t\geq 0$). Given function $f: \mathcal S^{(N)} \to \mathbb{R},$ we have
\begin{align}
G f(s) = &\sum_{i=1}^{b}\left[\lambda N(A_{i-1,1}(s)-A_{i,1}(s))(f(s + e_{i,1}) - f(s))  \right.  \label{G-Si1-A}\\ 
&~~~~~\left. + \lambda N(A_{i-1,2}(s)-A_{i,2}(s))(f(s + e_{i,2}) - f(s))  \right.  \label{G-Si2-A}\\ 
&~~~~~ + (1-p)\mu_1 Nq_{i,1}(f(s - e_{i,1})-f(s))  \label{G-Si1-D}\\
&~~~~~ + p\mu_1 Nq_{i,1}\left(f\left(s - \sum_{j=1}^ie_{j,1}+\sum_{j=1}^ie_{j,2}\right)-f(s)\right)  \label{G-Si1-2}\\
&~~~~~\left. + \mu_2 Nq_{i,2}\left(f\left(s -  \sum_{j=1}^i e_{j,2} + \sum_{j=1}^{i-1} e_{j,1}\right)-f(s)\right)\right].  
\label{Gen:G}
\end{align}  
For any bounded function $f: \mathcal S^{(N)} \to \mathbb{R},$
\begin{align}
\mathbb E[ G f(S) ] = 0, \label{steady-cond}
\end{align}
{which can be easily verified by using the global balance equations and the fact that $S$ represents the steady-state of the CTMC.}

To understand the steady-state performance of a load balancing policy, we will establish an upper bound on the distance function in \eqref{Thm:main-S}:
$$\max\left\{\sum_{i=1}^bS_i-\eta,0\right\},$$ with 
\begin{align}
\eta = \lambda +\frac{k\log N}{\sqrt{N}}.    
\end{align}
The upper bound measures the quantity that the total number of jobs in the system ($N\sum_{i=1}^b S_i$) exceeds $N\lambda+k\sqrt{N}\log N$ at steady state, and can be used to bound the probability that an incoming job is routed to an idle server in Corollary \ref{Thm:zerodelay}.

We consider a simple fluid system with arrival rate $\lambda$ and departure rate $\lambda+\frac{\log N}{\sqrt{N}},$ i.e.
$$\dot{x}=-\frac{\log N}{\sqrt{N}},$$ and function $g(x)$ which is the solution of the following Stein's equation \cite{Yin_16}:
\begin{align}
g'(x) \left(-\frac{\log N}{\sqrt{N}}\right)  = \max\left\{x-\eta,0\right\},  \forall x, \label{Gen:L}
\end{align} where {$g'(x) = \frac{d g(x)}{d x}$}. The left-hand side of \eqref{Gen:L} can be viewed as applying the generator of the simple fluid system to function $g(x),$ i.e. $$\frac{dg(x)}{dt}=g'(x)\dot{x}=g'(x) \left(-\frac{\log N}{\sqrt{N}}\right).$$

It is easy to verify that the solution to \eqref{Gen:L} is
\begin{equation}g(x)=-\frac{\sqrt{N}}{2\log N}\left(x-\eta\right)^{2} \mathbb{I}_{x\geq \eta},\label{eq:L}\end{equation} and
\begin{equation}g'(x)=-\frac{\sqrt{N}}{\log N}\left(x-\eta\right) \mathbb{I}_{x\geq \eta}.\label{eq:Lder}\end{equation}

We note that the simple fluid system is a one-dimensional system and the stochastic system is $b\times 2$-dimensional. In order to couple these two systems, we define
\begin{equation}
f(s)=g\left(\sum_{i=1}^b\sum_{m=1}^2 s_{i,m}\right),\label{eq:steinsolution}
\end{equation} and invoke $f(s)$ in Stein's method.

Since $\sum_{i=1}^b\sum_{m=1}^2 s_{i,m} = \sum_{i=1}^b s_i\leq b$ for $s\in \mathcal{S}^{(N)},$ and $f(s)$ is bounded for $s\in\mathcal{S}^{(N)},$ we have
\begin{equation}
\mathbb E[Gf(S)]=\mathbb E\left[Gg\left(\sum_{i=1}^b\sum_{m=1}^2 S_{i,m}\right)\right]=0.\label{eq:sse}
        \end{equation}

Now define $$h(x)=\max\left\{x-\eta,0\right\}.$$ Based on \eqref{Gen:L} and \eqref{eq:sse}, we obtain
\begin{align}
&\mathbb E\left[h\left(\sum_{i=1}^b\sum_{m=1}^2 S_{i,m}\right)\right] \nonumber\\
= & \mathbb E\left[ g'\left(\sum_{i=1}^b\sum_{m=1}^2 S_{i,m}\right) \left(-\frac{\log N}{\sqrt{N}}\right) - Gg\left(\sum_{i=1}^b\sum_{m=1}^2 S_{i,m}\right)\right].\label{eq:gencou}
\end{align}

Note that according to the definition of $f(s)$ in \eqref{eq:steinsolution}, $e_{j,1}$ and $e_{j,2}$, we have $$f(s+e_{j,1})=g\left(\sum_{i=1}^b\sum_{m=1}^2 s_{i,m}+\frac{1}{N}\right), ~ f(s+e_{j,2})=g\left(\sum_{i=1}^b\sum_{m=1}^2 s_{i,m}+\frac{1}{N}\right)$$ and
$$f(s-e_{j,1})=g\left(\sum_{i=1}^b\sum_{m=1}^2 s_{i,m}-\frac{1}{N}\right), ~ f(s-e_{j,2})=g\left(\sum_{i=1}^b\sum_{m=1}^2 s_{i,m}-\frac{1}{N}\right)$$for any $1\leq j \leq b.$ Therefore,
\begin{align*}
    &Gg\left(\sum_{i=1}^b\sum_{m=1}^2 s_{i,m}\right) \\
    =&
    N \lambda\left(1-A_b(S)\right)\left(g\left(\sum_{i=1}^b\sum_{m=1}^2 s_{i,m}+\frac{1}{N}\right)-g\left(\sum_{i=1}^b\sum_{m=1}^2 s_{i,m}\right)\right)\\
    &+N \left((1-p)\mu_1 s_{1,1}+\mu_2s_{1,2}\right)\left(g\left(\sum_{i=1}^b\sum_{m=1}^2 s_{i,m}-\frac{1}{N}\right)-g\left(\sum_{i=1}^b\sum_{m=1}^2 s_{i,m}\right)\right),
\end{align*}
where the first term represents the transitions when a job arrives and the second term represents the transitions when a job departures from the system.
Note $(1-p)\mu_1s_{1,1}$ and $\mu_2s_{1,2}$ are the rates at which jobs leave the system when in phase 1 and phase 2, respectively in the state $s$. Therefore, $(1-p)\mu_1s_{1,1}+\mu_2s_{1,2}$ is the total departure rate. 
Define $d_1=(1-p)\mu_1 s_{1,1}+\mu_2s_{1,2}$ and its stochastic correspondence $D_1=(1-p)\mu_1 S_{1,1}+\mu_2S_{1,2}$ for simple notations.  

Substituting the equation above to \eqref{eq:gencou}, we have \begin{align}
&\mathbb E\left[h\left(\sum_{i=1}^b\sum_{m=1}^2 S_{i,m}\right)\right]\nonumber\\
=&\mathbb E\left[g'\left(\sum_{i=1}^b\sum_{m=1}^2 S_{i,m}\right)\left(-\frac{\log N}{\sqrt{N}}\right) \right. \nonumber\\
&\left. -N\lambda(1-A_b(S))\left(g\left(\sum_{i=1}^b\sum_{m=1}^2 S_{i,m}+\frac{1}{N}\right)-g\left(\sum_{i=1}^b\sum_{m=1}^2 S_{i,m}\right)\right)\nonumber\right.\\
&\left.-N D_1\left(g\left(\sum_{i=1}^b\sum_{m=1}^2 S_{i,m}-\frac{1}{N}\right)-g\left(\sum_{i=1}^b\sum_{m=1}^2 S_{i,m}\right)\right)\right]. \label{gen-diff-expand}
\end{align}

From the closed-forms of $g$ and $g'$ in \eqref{eq:L} and \eqref{eq:Lder}, note that for any $x <  \eta,$
\begin{equation*}
g(x) = g'\left(x\right)=0.
\end{equation*} Also note that when $x>\eta+\frac{1}{N},$
\begin{equation}g'(x)=-\frac{\sqrt{N}}{\log N}\left(x-\eta\right),\end{equation} so for $x>\eta+\frac{1}{N},$
\begin{equation}g''(x)=-\frac{\sqrt{N}}{\log N}.\end{equation}

By using mean-value theorem in the region $\mathcal T_1 = \{x ~|~ \eta-\frac{1}{N} \leq x \leq \eta+\frac{1}{N}\}$ and Taylor theorem in the region $\mathcal T_2 = \{ x ~|~ x > \eta+\frac{1}{N}\},$ we have
\begin{align}
g(x+\frac{1}{N})-g\left(x\right) =& \left(g(x+\frac{1}{N})-g\left(x\right)\right) \left(\mathbb{I}_{x \in \mathcal T_1} + \mathbb{I}_{x \in \mathcal T_2} \right) \nonumber\\
=&  \frac{g'(\xi)}{N}\mathbb{I}_{x \in \mathcal T_1} + \left(\frac{g'(x)}{N} +  \frac{g''(\zeta)}{2N^2}\right) \mathbb{I}_{x \in \mathcal T_2} \label{g+1/N}\\
g(x-\frac{1}{N})-g\left(x\right) =& \left(g(x-\frac{1}{N})-g\left(x\right)\right) \left(\mathbb{I}_{x \in \mathcal T_1} + \mathbb{I}_{x \in \mathcal T_2} \right) \nonumber\\
=&  -\frac{g'(\tilde{\xi})}{N}\mathbb{I}_{x \in \mathcal T_1} + \left(-\frac{g'(x)}{N} +  \frac{g''(\tilde{\zeta})}{2N^2}\right) \mathbb{I}_{x \in \mathcal T_2} \label{g-1/N}
\end{align}
where $\xi, \zeta \in (x,x+\frac{1}{N})$ and $\tilde{\xi}, \tilde{\zeta} \in (x-\frac{1}{N},x).$
Substitute \eqref{g+1/N} and \eqref{g-1/N} into the generator difference in \eqref{gen-diff-expand}, we have
\begin{align}
&\mathbb E\left[h\left(\sum_{i=1}^b  S_{i}\right)\right]
= J_1 + J_2 + J_3,
\end{align}
with
\begin{align}
J_1 =& \mathbb E\left[g'\left(\sum_{i=1}^b S_{i}\right)\left(\lambda A_b(S) - \lambda-\frac{\log N}{\sqrt{N}}+D_1\right)\mathbb{I}_{\sum_{i=1}^b S_{i} \in \mathcal T_2}\right], \label{G-expansion-SSC}\\
J_2=&\mathbb E\left[\left(g'\left(\sum_{i=1}^b S_{i}\right)\left(-\frac{\log N}{\sqrt{N}}\right)-\lambda(1-A_b(S))g'(\xi)+D_1g'(\tilde{\xi}) \right)\mathbb{I}_{\sum_{i=1}^b S_{i} \in \mathcal T_1} \right], \label{G-expansion-Gradient-1}\\
J_3=&-\mathbb E\left[\frac{1}{2N}\left(\lambda(1-A_b(S))g''(\zeta) + D_1g''(\tilde{\zeta})\right)\mathbb{I}_{\sum_{i=1}^b S_{i} \in \mathcal T_2}\right]. \label{G-expansion-Gradient-2}
\end{align}
Note that in \eqref{G-expansion-Gradient-1} and \eqref{G-expansion-Gradient-2}, we have that
$$\xi,\zeta  \in \left(\sum_{i=1}^b S_i,\sum_{i=1}^b S_i+\frac{1}{N}\right) ~\text{and}~ \tilde{\xi},\tilde{\zeta}\in \left(\sum_{i=1}^b S_i-\frac{1}{N},\sum_{i=1}^b S_i\right)$$ are random variables whose values depend on $\sum_{i=1}^b S_i.$ We do not include $\sum_{i=1}^b S_i$ in the notation for simplicity.

To establish the main result in Theorem \ref{Thm:main}, we need to provide the upper bounds on \eqref{G-expansion-SSC}, \eqref{G-expansion-Gradient-1} and \eqref{G-expansion-Gradient-2}. In the following subsection \ref{sec:Grad_Bounds}, we study $g'$ and $g''$ to bound the terms in \eqref{G-expansion-Gradient-1} and \eqref{G-expansion-Gradient-2}; In the subsection \ref{sec:SSC}, we study SSC to bound the term in \eqref{G-expansion-SSC}. We summarize the proof in a roadmap in Figure \ref{fig:roadmap}. Lemmas \ref{lemma:g'} and \ref{lemma:g''} establish gradient bounds, which are used to bound $J_2+J_3$ in Lemma \ref{Gradient:final}. Lemmas \ref{tailbound-u:s12}, \ref{tailbound-l:s11}, \ref{tailbound-l:s12} and \ref{SSC:s1 s2} are iterative SSC to show the system is in $S_{ssc}$ with a high probability, which rely on Lemma \ref{tail-bound-cond} and are used to bound $J_1$ in Lemmas \ref{SSC:s1 s2 negative} and \ref{SSC:out}. We finally prove Theorem \ref{Thm:main} by combining Lemmas \ref{Gradient:final}, \ref{SSC:s1 s2 negative} and \ref{SSC:out}.
\begin{figure}[!htbp]
  \centering
  \includegraphics[width=4.1in]{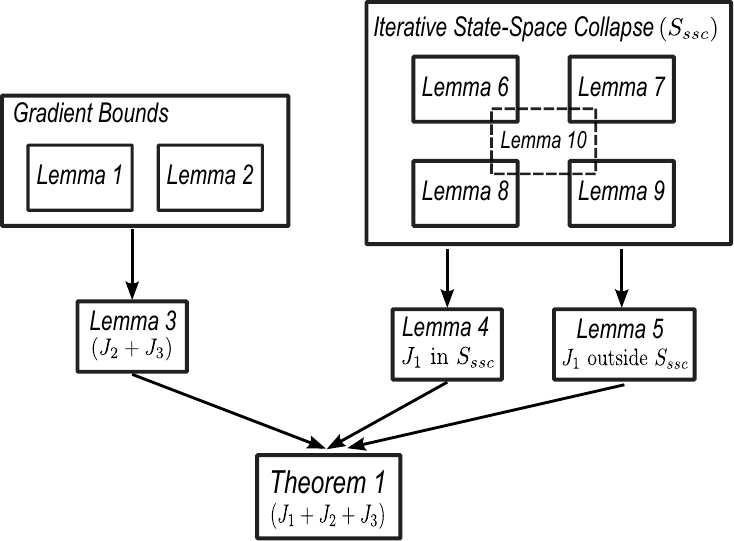}
  \caption{~The roadmap of proving Theorem \ref{Thm:main}.}
  \label{fig:roadmap}
\end{figure}

\subsection{Gradient Bounds}\label{sec:Grad_Bounds}
To bound $J_2$ in \eqref{G-expansion-Gradient-1} and $J_3$ in \eqref{G-expansion-Gradient-2}, we summarize bounds on $g'$ and $g''$ in the following two lemmas.

\begin{lemma}\label{lemma:g'}
Given  $x\in\left[ \eta  -\frac{2}{N}, \eta  +\frac{2}{N}\right],$ we have $$|g'(x)|\leq \frac{2}{\sqrt{N}\log N}.$$ \qed 
\end{lemma}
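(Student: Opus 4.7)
The plan is essentially a direct substitution into the closed form \eqref{eq:Lder} derived earlier. Recall that
$$g'(x) = -\frac{\sqrt{N}}{\log N}(x-\eta)\mathbb{I}_{x\geq \eta},$$
so $g'$ is identically zero on $(-\infty,\eta)$ and linear with slope $-\sqrt{N}/\log N$ on $[\eta,\infty)$. I would split the interval $[\eta-\tfrac{2}{N},\eta+\tfrac{2}{N}]$ into the two cases prescribed by the indicator.

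First, for $x\in[\eta-\tfrac{2}{N},\eta)$, the indicator vanishes and $g'(x)=0$, which is trivially bounded by $\tfrac{2}{\sqrt{N}\log N}$. Second, for $x\in[\eta,\eta+\tfrac{2}{N}]$, one has $0\leq x-\eta\leq \tfrac{2}{N}$, so
$$|g'(x)| \;=\; \frac{\sqrt{N}}{\log N}(x-\eta) \;\leq\; \frac{\sqrt{N}}{\log N}\cdot\frac{2}{N} \;=\; \frac{2}{\sqrt{N}\log N}.$$
Combining the two cases gives the stated bound. There is no real obstacle here: the lemma is a one-line consequence of the explicit solution to the Stein equation \eqref{Gen:L}, and its role in the paper is purely to furnish a pointwise magnitude estimate that will be fed, together with the companion bound on $g''$ in Lemma~\ref{lemma:g''}, into the control of $J_2$ and $J_3$ in \eqref{G-expansion-Gradient-1}--\eqref{G-expansion-Gradient-2}.
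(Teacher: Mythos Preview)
Your proof is correct and essentially identical to the paper's: both use the explicit form of $g'$ from the Stein equation and bound $|x-\eta|\leq 2/N$. The only cosmetic difference is that you split on the indicator $\mathbb{I}_{x\geq\eta}$, whereas the paper writes $g'(x)=-\frac{\sqrt{N}}{\log N}\max\{x-\eta,0\}$ and uses $\max\{x-\eta,0\}\leq|x-\eta|$ in one line.
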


\begin{lemma}\label{lemma:g''}
For $x>\eta,$ we have 
\begin{align*}
|g''(x)|\leq \frac{\sqrt{N}}{\log N}.  
\end{align*}
\qed 
\end{lemma}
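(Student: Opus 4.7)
The proof of Lemma \ref{lemma:g''} is essentially a direct computation from the closed form of $g$ given in \eqref{eq:L}, so my plan is short. First I would recall that, by \eqref{eq:L},
\begin{equation*}
g(x) = -\frac{\sqrt{N}}{2\log N}(x-\eta)^2\,\mathbb{I}_{x\geq \eta},
\end{equation*}
and that on the open region $x > \eta$ the indicator is identically $1$, so $g$ coincides there with the smooth quadratic $-\frac{\sqrt{N}}{2\log N}(x-\eta)^2$.

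Next, on this open region I differentiate directly twice. The first derivative is
\begin{equation*}
g'(x) = -\frac{\sqrt{N}}{\log N}(x-\eta),
\end{equation*}
matching \eqref{eq:Lder} (where the indicator is again redundant for $x>\eta$), and differentiating once more yields
\begin{equation*}
g''(x) = -\frac{\sqrt{N}}{\log N}.
\end{equation*}
Taking absolute values then gives $|g''(x)| = \frac{\sqrt{N}}{\log N}$, which of course satisfies the stated bound with equality.

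There is essentially no obstacle here: the only minor subtlety is that $g$ fails to be $C^2$ at the single point $x=\eta$ (the left and right second derivatives differ), but the lemma is stated only for $x>\eta$, so this non-smoothness is irrelevant. This matches the remark made just before the lemma in the text, where the same expression for $g''(x)$ was already derived for $x > \eta + 1/N$; my proof simply notes that the same closed-form derivation goes through on the entire open interval $x>\eta$.
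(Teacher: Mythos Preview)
Your proposal is correct and essentially identical to the paper's own proof: both simply note that for $x>\eta$ one has $g'(x)=-\frac{\sqrt{N}}{\log N}(x-\eta)$ and differentiate once more to obtain $|g''(x)|=\frac{\sqrt{N}}{\log N}$. The only cosmetic difference is that the paper reads off $g'$ from the Stein equation \eqref{Gen:L} while you read it off from the closed form \eqref{eq:L}, which amounts to the same computation.
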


Based on the bounds on $g'$ in Lemma \ref{lemma:g'} and $g''$ in Lemma \ref{lemma:g''}, we provide the upper bound on $J_2+J_3$ in the following lemma.

\begin{lemma}\label{Gradient:final}
For $g(\cdot)$ defined in \eqref{eq:L}, we have
\begin{align*}
J_2 + J_3 \leq  \frac{6\mu_{\max}}{\sqrt{N}\log N}. 
\end{align*}
\qed
\end{lemma}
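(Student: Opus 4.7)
} The plan is a direct term-by-term estimation of $J_2$ and $J_3$ using the gradient bounds from Lemmas \ref{lemma:g'} and \ref{lemma:g''}, after first verifying that the random evaluation points $\xi,\tilde\xi,\zeta,\tilde\zeta$ actually lie in the regions where those bounds are valid.

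First I would handle $J_2$ in \eqref{G-expansion-Gradient-1}. On the event $\sum_{i=1}^b S_i\in\mathcal T_1$, the random points satisfy $\xi\in(\sum_i S_i,\sum_i S_i+1/N)\subseteq[\eta-1/N,\eta+2/N]$ and $\tilde\xi\in(\sum_i S_i-1/N,\sum_i S_i)\subseteq[\eta-2/N,\eta+1/N]$, and of course $\sum_i S_i\in[\eta-1/N,\eta+1/N]$ itself. All three arguments therefore lie in the window $[\eta-2/N,\eta+2/N]$ where Lemma \ref{lemma:g'} gives $|g'|\le 2/(\sqrt N\log N)$. Pulling this deterministic bound out of the expectation and using the trivial bounds $\lambda\le 1$, $1-A_b(S)\le 1$, and (pointwise) $D_1=(1-p)\mu_1 S_{1,1}+\mu_2 S_{1,2}\le \max\{(1-p)\mu_1,\mu_2\}\le\mu_{\max}$, I obtain a bound of the form
\[
|J_2|\;\le\;\frac{2}{\sqrt N\log N}\left(\frac{\log N}{\sqrt N}+\lambda+\mu_{\max}\right),
\]
which is $O(1/(\sqrt N\log N))$ with an explicit constant.

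Next I would bound $J_3$ in \eqref{G-expansion-Gradient-2}. On the event $\sum_{i=1}^b S_i\in\mathcal T_2$, both $\zeta\in(\sum_i S_i,\sum_i S_i+1/N)$ and $\tilde\zeta\in(\sum_i S_i-1/N,\sum_i S_i)$ exceed $\eta$ (since $\sum_i S_i>\eta+1/N$), so Lemma \ref{lemma:g''} applies and gives $|g''(\zeta)|,|g''(\tilde\zeta)|\le \sqrt N/\log N$. The prefactor $1/(2N)$ combines with this $\sqrt N/\log N$ to produce the correct scaling $1/(\sqrt N\log N)$, and bounding $\lambda(1-A_b(S))\le 1$ and $D_1\le\mu_{\max}$ yields
\[
|J_3|\;\le\;\frac{1}{2\sqrt N\log N}\bigl(\lambda+\mu_{\max}\bigr).
\]

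Finally I would add the two bounds and absorb constants using the fact that $\mu_{\max}\ge 1$, which follows from the normalization $1/\mu_1+p/\mu_2=1$ with $p<1$, forcing $\mu_1>1$. Under the assumption \eqref{N-cond} that $\log N\ge$ const, the leftover $O(1/N)$ contribution coming from the $(\log N)/\sqrt N$ factor in the first term of $J_2$ can be absorbed into an $O(\mu_{\max}/(\sqrt N\log N))$ term, yielding the claimed constant $6\mu_{\max}$.

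This is essentially a bookkeeping argument; the only non-trivial checks are the containments $\xi,\tilde\xi\in[\eta-2/N,\eta+2/N]$ and $\zeta,\tilde\zeta>\eta$, which are ensured by the respective indicators and the definition of $\mathcal T_1,\mathcal T_2$, and the uniform pointwise bound $D_1\le\mu_{\max}$. The main mild obstacle is simply keeping the constants tight enough to hit the stated $6\mu_{\max}$, which is where using $\mu_{\max}\ge 1$ to dominate $\lambda$ and the $O(1/N)$ leftovers matters.
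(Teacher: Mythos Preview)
Your proposal is correct and follows essentially the same route as the paper: bound $D_1\le\mu_{\max}$, apply Lemma~\ref{lemma:g'} to the three $g'$ terms in $J_2$ and Lemma~\ref{lemma:g''} to the two $g''$ terms in $J_3$, then absorb $\lambda\le 1\le\mu_{\max}$ and the residual $O(1/N)$ piece. If anything, you are more careful than the paper in explicitly verifying that $\xi,\tilde\xi\in[\eta-2/N,\eta+2/N]$ and $\zeta,\tilde\zeta>\eta$ on the respective events, which the paper leaves implicit.
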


The proofs of the lemmas above are presented in Appendix \ref{appendix:GB}. 

\subsection{State Space Collapse (SSC)}\label{sec:SSC}

In this subsection, we analyze $J_1$ in \eqref{G-expansion-SSC}:
\begin{align}
&\mathbb E\left[g'\left(\sum_{i=1}^b S_{i}\right)\left(\lambda A_b(S) - \lambda-\frac{\log N}{\sqrt{N}}+D_1\right)\mathbb{I}_{\sum_{i=1}^b S_{i} > \eta + \frac{1}{N}}\right] \nonumber\\
=& \mathbb E\left[\frac{\sqrt{N}}{\log N} h\left(\sum_{i=1}^bS_{i}\right) \left(-\lambda A_b(S) + \lambda + \frac{\log N}{\sqrt{N}}-D_1\right)\mathbb{I}_{\sum_{i=1}^b S_{i} > \eta + \frac{1}{N}}\right]\nonumber\\
\leq& \mathbb E\left[\frac{\sqrt{N}}{\log N} h\left(\sum_{i=1}^b S_{i}\right) \left(\lambda + \frac{\log N}{\sqrt{N}}-D_1\right)\mathbb{I}_{\sum_{i=1}^b S_{i} > \eta + \frac{1}{N}}\right]\label{SSC0},
\end{align} where the equality is due to Stein's equation \eqref{Gen:L}, and the inequality holds because $$\frac{\sqrt{N}}{\log N} h\left(\sum_{i=1}^b S_{i}\right) \mathbb{I}_{\sum_{i=1}^b S_{i} > \eta + \frac{1}{N}}\geq 0.$$

We first focus on 
\begin{equation}
\left(\lambda + \frac{\log N}{\sqrt{N}}-(1-p)\mu_1s_{1,1}-\mu_2s_{1,2}\right)\mathbb{I}_{\sum_{i=1}^b s_{i} > \eta + \frac{1}{N}},\label{eq:sscterm}
\end{equation}
where we recall $\eta = \lambda +\frac{k\log N}{\sqrt{N}}$ and $d_1=(1-p)\mu_1s_{1,1}+\mu_2s_{1,2}$ is the total departure rate when the system is in the state $s.$

We consider two cases: $s\in \mathcal S_{ssc} $ and $s\not\in \mathcal S_{ssc},$ where $$\mathcal S_{ssc} = \mathcal S_{ssc_1} \bigcup S_{ssc_2},$$ and
\begin{align*}
\mathcal S_{ssc_1} =& \left\{s ~\left|~ s_1 \geq \lambda + \left(\frac{1+\mu_1+\mu_2}{w_l}-\mu_1\right)\frac{\log N}{\sqrt{N}}, \right. \right. \\ 
 &\left. \left.  ~~~~~~s_{1,1} \geq \frac{\lambda}{\mu_1} - \frac{\log N}{\sqrt{N}}, \text{and}~ s_{1,2} \geq \frac{p\lambda}{\mu_2} - \frac{\mu_1\log N}{\sqrt{N}}\right.\right\}, \\
\mathcal S_{ssc_2} =& \left\{s ~\left|~ \sum_{i=1}^{b} s_i \leq \lambda+\frac{k\log N}{\sqrt{N}}\right. \right\}.
\end{align*}
\begin{itemize}
    \item {\bf Case 1:} $\mathcal S_{ssc_1}$ is shown as the gray region in Figure~\ref{fig:final SSC}. Any $s\in \mathcal S_{ssc_1}$ satisfies $$(1-p)\mu_1s_{1,1}+\mu_2s_{1,2} \geq \lambda + \frac{\log N}{\sqrt{N}},$$ so  $\left(\lambda + \frac{\log N}{\sqrt{N}}-(1-p)\mu_1s_{1,1}-\mu_2s_{1,2}\right)\mathbb{I}_{\sum_{i=1}^b s_{i} > \eta + \frac{1}{N}} \leq 0$ for any $s\in \mathcal S_{ssc_1}.$ The details are presented in Lemma \ref{SSC:s1 s2 negative}.   When $s\in \mathcal S_{ssc_2},$ $$\mathbb{I}_{\sum_{i=1}^b s_{i} > \eta + \frac{1}{N}}=0,$$ so $\left(\lambda + \frac{\log N}{\sqrt{N}}-(1-p)\mu_1s_{1,1}-\mu_2s_{1,2}\right)\mathbb{I}_{\sum_{i=1}^b s_{i} > \eta + \frac{1}{N}} =0$ for any $s\in \mathcal S_{ssc_2}.$ 
    
    \item {\bf Case 2:} We will show that $$\mathbb P\left( S \notin \mathcal S_{ssc}\right) \leq \frac{3}{N^{2}}$$ in Lemma \ref{SSC:out} using an iterative state space collapse approach. 
\end{itemize}

\begin{figure}[!htbp]
  \centering
  \includegraphics[width=3.1in]{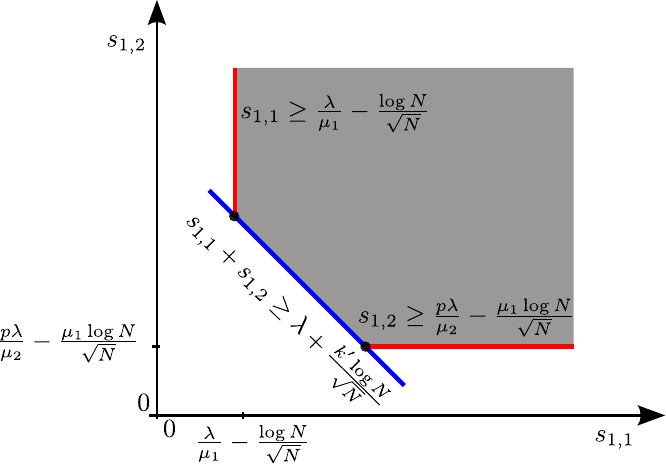}
  \caption{~State Space Collapse in $\mathcal S_{ssc_1}.$}
  \label{fig:final SSC}
\end{figure}

\begin{lemma}\label{SSC:s1 s2 negative} For any $s \in \mathcal S_{ssc_1},$ 
\begin{align*} 
\left(\lambda+\frac{\log N}{\sqrt{N}}-(1-p)\mu_1s_{1,1}-\mu_2s_{1,2}\right)\mathbb{I}_{\sum_{i=1}^bs_i> \lambda +\frac{k\log N}{\sqrt{N}}+\frac{1}{N}} \leq 0.
\end{align*} \hfill{$\square$}
\end{lemma}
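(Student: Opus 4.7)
The plan is to drop the indicator entirely and prove directly that for every $s \in \mathcal{S}_{ssc_1}$ the parenthesized factor $\lambda + \frac{\log N}{\sqrt N} - (1-p)\mu_1 s_{1,1} - \mu_2 s_{1,2}$ is already non-positive; the product with the indicator is then automatically $\leq 0$, regardless of whether $\sum_i s_i$ exceeds $\lambda + \frac{k\log N}{\sqrt N} + \frac{1}{N}$. So the whole task reduces to establishing
\[
(1-p)\mu_1 s_{1,1} + \mu_2 s_{1,2} \;\geq\; \lambda + \frac{\log N}{\sqrt N}
\]
from the three lower bounds in the definition of $\mathcal{S}_{ssc_1}$.

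To use all three constraints simultaneously with nonnegative coefficients, I would split on which of $(1-p)\mu_1$ and $\mu_2$ realizes $w_l$. When $w_l = (1-p)\mu_1 \leq \mu_2$, decompose
\[
(1-p)\mu_1 s_{1,1} + \mu_2 s_{1,2} \;=\; w_l\, s_1 + (\mu_2 - w_l)\, s_{1,2}
\]
and substitute the $s_1$ and $s_{1,2}$ bounds; when $w_l = \mu_2 < (1-p)\mu_1$, decompose instead as $w_l s_1 + ((1-p)\mu_1 - w_l) s_{1,1}$ and substitute the $s_1$ and $s_{1,1}$ bounds. In each case the residual coefficient is nonnegative, so the substitution yields a valid lower bound. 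After substitution the $\lambda$-terms collapse to exactly $\lambda$ via the normalization $\frac{1}{\mu_1} + \frac{p}{\mu_2} = 1$—which is precisely why the definition of $\mathcal{S}_{ssc_1}$ pins the leading parts of $s_{1,1}$ and $s_{1,2}$ at $\frac{\lambda}{\mu_1}$ and $\frac{p\lambda}{\mu_2}$.

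What remains is to check that the coefficient of $\frac{\log N}{\sqrt N}$ coming out of the substitution is at least $1$. In the first case this simplifies to $1 + \mu_1 + \mu_2 - \mu_1 \mu_2 \geq 1$, i.e.\ $\frac{1}{\mu_1} + \frac{1}{\mu_2} \geq 1$, which holds because $\frac{1}{\mu_1} + \frac{1}{\mu_2} = 1 + \frac{1-p}{\mu_2}$ and $p \leq 1$. In the second case it becomes $1 + p\mu_1 + 2\mu_2 - \mu_1 \mu_2 \geq 1$, equivalently $\frac{p}{\mu_2} + \frac{2}{\mu_1} \geq 1$, and the identity gives $\frac{p}{\mu_2} + \frac{2}{\mu_1} = 1 + \frac{1}{\mu_1}$. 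The ``hard part'' is really only algebraic bookkeeping: the constant $\frac{1+\mu_1+\mu_2}{w_l} - \mu_1$ appearing in the $s_1$ constraint is calibrated exactly so that, after collecting everything, the $\frac{\log N}{\sqrt N}$ contributions cancel the $-\mu_1\mu_2$ term from the substitution and leave a residual $\geq 1$; once one observes that the $w_l s_1 + (\cdot) s_{1,m}$ decomposition keeps coefficients nonnegative in the appropriate case, no further inequality work is needed.
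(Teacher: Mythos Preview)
Your argument is correct and essentially matches the paper's proof: both show the parenthesized term is $\leq 0$ on all of $\mathcal{S}_{ssc_1}$ (so the indicator is irrelevant), and both reduce to verifying the same two algebraic inequalities using $\tfrac{1}{\mu_1}+\tfrac{p}{\mu_2}=1$. The only cosmetic difference is framing: the paper casts $\min_{(s_{1,1},s_{1,2})\in\mathcal{S}_{ssc_1}} (1-p)\mu_1 s_{1,1}+\mu_2 s_{1,2}$ as a linear program and checks the two extreme points $(s_{1,1}=\tfrac{\lambda}{\mu_1}-\tfrac{\log N}{\sqrt N},\ s_1\text{ tight})$ and $(s_{1,2}=\tfrac{p\lambda}{\mu_2}-\tfrac{\mu_1\log N}{\sqrt N},\ s_1\text{ tight})$, whereas you split on which of $(1-p)\mu_1,\mu_2$ equals $w_l$ and use the decomposition $w_l s_1 + (\cdot)s_{1,m}$ --- which is exactly the choice that picks out the binding vertex in each case.
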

The proof of Lemma \ref{SSC:s1 s2 negative} can be found in Appendix \ref{app:inssc}.

\begin{lemma}\label{SSC:out}
For a large $N$ such that $\log N \geq \frac{3.5}{\min\left(\frac{\mu_1}{16\mu_{\max}},\frac{\mu_2}{12\mu_{\max}},\frac{\mu_1\mu_2}{40\mu_{\max}}\right)},$ we have
$$\mathbb P\left( S \notin \mathcal S_{ssc}\right) \leq \frac{3}{N^{2}}.$$  \hfill{$\square$}
\end{lemma}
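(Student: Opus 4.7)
The plan is as follows. Since $\mathcal{S}_{ssc}\supseteq \mathcal{S}_{ssc_1}$, we have $\mathbb P(S\notin \mathcal{S}_{ssc})\leq \mathbb P(S\notin \mathcal{S}_{ssc_1})$, so a union bound over the three defining constraints of $\mathcal{S}_{ssc_1}$ reduces the problem to bounding three one-dimensional lower-tail probabilities by $N^{-2}$ each:
\begin{align*}
\mathbb P\!\Bigl(S_1 < \lambda + \bigl(\tfrac{1+\mu_1+\mu_2}{w_l}-\mu_1\bigr)\tfrac{\log N}{\sqrt{N}}\Bigr),\quad \mathbb P\!\Bigl(S_{1,1} < \tfrac{\lambda}{\mu_1}-\tfrac{\log N}{\sqrt{N}}\Bigr),\quad \mathbb P\!\Bigl(S_{1,2} < \tfrac{p\lambda}{\mu_2}-\tfrac{\mu_1\log N}{\sqrt{N}}\Bigr).
\end{align*}
These are exactly the tail estimates delivered by Lemmas \ref{tailbound-l:s11}, \ref{tailbound-l:s12}, and \ref{SSC:s1 s2}, which I would invoke in that order.

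The technical core is the iterative structure needed to untangle the circular dependence between the marginals. The drift of $S_{1,1}$ under the generator $G$ involves $\mu_2 S_{1,2}$ (servers finishing phase 2 and starting new phase-1 jobs), while the drift of $S_{1,2}$ involves $p\mu_1 S_{1,1}$, so neither marginal can be analyzed in isolation. I would break this cycle by first applying Lemma \ref{tailbound-u:s12} to obtain an upper tail bound on $S_{1,2}$: use a Lyapunov test function like a quadratic or exponential of $(S_{1,2}-p\lambda/\mu_2-c\log N/\sqrt{N})^+$, exploit $\mathbb E[Gf(S)]=0$ together with the crude inflow bound $p\mu_1 S_{1,1}\leq p\mu_1$, and feed this into the general drift-to-tail conversion of Lemma \ref{tail-bound-cond}. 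With $S_{1,2}$ pinned from above, Lemma \ref{tailbound-l:s11} yields a lower tail on $S_{1,1}$: on $\{S_{1,1}<\lambda/\mu_1-\log N/\sqrt{N}\}$, the total departure rate $(1-p)\mu_1 S_{1,1}+\mu_2 S_{1,2}$ falls strictly below the arrival rate $\lambda$ by a margin of order $\log N/\sqrt{N}$, so the drift of the appropriate Lyapunov function points the right way and a second invocation of Lemma \ref{tail-bound-cond} gives $N^{-2}$. Iterating once more, Lemma \ref{tailbound-l:s12} uses the lower bound on $S_{1,1}$ (which lower-bounds the phase-2 inflow $p\mu_1 S_{1,1}$) to produce the matching lower tail on $S_{1,2}$, and Lemma \ref{SSC:s1 s2} combines the two phasewise bounds into the lower tail on $S_1$ stated above.

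The main obstacle is choosing the slack constants in each iterative step so that the drift strictly dominates the $O(1/N)$ jump error of the generator, and so that the composition still produces tails fast enough for three applications of Lemma \ref{tail-bound-cond} to yield a $3/N^2$ total after the final union bound. The coefficient $\mu_1$ on $\log N/\sqrt{N}$ in the $s_{1,2}$ threshold and the coefficient $\frac{1+\mu_1+\mu_2}{w_l}-\mu_1$ in the $s_1$ threshold are calibrated for exactly this purpose: the first absorbs the slack transferred from the $S_{1,1}$ bound through the phase-2 inflow, and the second captures the combined slack budget when summing the two phase-wise bounds. The hypothesis $\log N\geq 3.5/\min(\mu_1/(16\mu_{\max}),\mu_2/(12\mu_{\max}),\mu_1\mu_2/(40\mu_{\max}))$ is the precise threshold at which each of the three iterative Lyapunov moments remains finite and the resulting Chernoff-type tail decays at the $N^{-2}$ rate; the three ratios inside the minimum correspond one-to-one with the three iterative steps, and the constant $3.5$ is what the quadratic/exponential moment calculation requires to convert an $O(\log N/\sqrt{N})$ drift gap into a probability of order $N^{-2}$.
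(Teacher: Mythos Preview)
Your very first reduction is the problem. You write $\mathbb P(S\notin\mathcal S_{ssc})\leq \mathbb P(S\notin\mathcal S_{ssc_1})$ and then try to bound the three defining constraints of $\mathcal S_{ssc_1}$ separately. The inequality is formally valid, but the right-hand side is \emph{not} $O(N^{-2})$: by work conservation $\mathbb E[S_1]=\lambda(1-\mathbb P(\mathcal B))\leq\lambda$, whereas $\mathcal S_{ssc_1}$ requires $s_1\geq\lambda+\bigl(\tfrac{1+\mu_1+\mu_2}{w_l}-\mu_1\bigr)\tfrac{\log N}{\sqrt{N}}$, a threshold that sits \emph{above} $\lambda$. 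The typical steady state has $S_1\approx\lambda$ and $\sum_{i\geq 2}S_i\approx 0$, which lies in $\mathcal S_{ssc_2}$, not $\mathcal S_{ssc_1}$. Discarding $\mathcal S_{ssc_2}$ therefore throws away the region where the chain actually lives, and $\mathbb P(S\notin\mathcal S_{ssc_1})$ is bounded away from zero.

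This is tied to your misreading of Lemma~\ref{SSC:s1 s2}. That lemma does \emph{not} give a lower tail bound on $S_1$; it bounds the tail of $\min\{\eta-S_1,\sum_{i=2}^b S_i\}$. A one-sided Lyapunov argument for $-s_1$ alone cannot work, because the drift of $-s_1$ contains the term $-\bigl((1-p)\mu_1 s_{2,1}+\mu_2 s_{2,2}\bigr)$, and without a lower bound on $s_2$ this term can vanish and the drift is not uniformly negative. The paper's proof handles this by using the min-Lyapunov function in Lemma~\ref{SSC:s1 s2} and then exploiting the resulting dichotomy: with high probability either $\eta-S_1\leq \tfrac{(c_1+\mu_1)\log N}{\sqrt N}$, which together with Lemmas~\ref{tailbound-l:s11} and~\ref{tailbound-l:s12} lands in $\mathcal S_{ssc_1}$, or $\sum_{i=2}^b S_i\leq \tfrac{(c_1+\mu_1)\log N}{\sqrt N}\leq \eta-S_1$, which forces $\sum_{i=1}^b S_i\leq\eta$ and hence $S\in\mathcal S_{ssc_2}$. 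The union $\mathcal S_{ssc_1}\cup\mathcal S_{ssc_2}$ is essential and cannot be replaced by $\mathcal S_{ssc_1}$ alone.
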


\begin{proof}
The proof of Lemma \ref{SSC:out} is based on an ``iterative" procedure to establish state space collapse, which is achieved by proving a sequence of lemmas (Lemma \ref{tailbound-u:s12} - Lemma \ref{SSC:s1 s2}). The detailed proof of four lemmas can be found in Appendix \ref{App:IterSSC}.

Define sets $\tilde{\cal S}_1$ and $\tilde{\cal S}_2$  such that 
\begin{align}
\tilde{\cal S}_1=&\left\{s\left|s_{1,1} \geq \frac{\lambda}{\mu_1} - \frac{\log N}{\sqrt{N}}\hbox{ and } s_{1,2}\geq \frac{p\lambda}{\mu_2} - \frac{\mu_1\log N}{\sqrt{N}}\right.\right\}\\
\tilde{\cal S}_2=&\left\{s\left| \min\left\{\eta-s_1,\sum_{i=2}^{b} s_i\right\} \leq \frac{(c_1+\mu_1)\log N}{\sqrt{N}}\right.\right\}. 
\end{align}
According to the union bound and Lemmas \ref{tailbound-l:s11}-\ref{SSC:s1 s2}, we have 
\begin{align*}
&\mathbb P \left( S \notin \tilde{\mathcal S}_1\cap \tilde{\mathcal S}_2\right)\\ 
\leq& \frac{5}{\mu_1}\frac{\sqrt{N}}{\log N} e^{-\min\left(\frac{\mu_1}{16\mu_{\max}},\frac{\mu_1\mu_2}{40\mu_{\max}}\right)\log^2 N}+\frac{16}{\mu_1\mu_2} \frac{N}{\log^2 N} e^{-\min\left(\frac{\mu_1}{16\mu_{\max}},\frac{\mu_2}{12\mu_{\max}},\frac{\mu_1\mu_2}{40\mu_{\max}}\right)\log^2 N},\\
&+\frac{34}{\mu_1^2\mu_2} \frac{N^{1.5}}{\log^3 N} e^{-\min\left(\frac{\mu_1}{16\mu_{\max}},\frac{\mu_2}{12\mu_{\max}},\frac{\mu_1\mu_2}{40\mu_{\max}}\right)\log^2 N}\\
\leq& \frac{3}{N^{2}},    
\end{align*}
where the second inequality holds for a sufficiently large $N$ such that $$\log N \geq 
\frac{3.5}{\min\left(\frac{\mu_1}{16\mu_{\max}},\frac{\mu_2}{12\mu_{\max}},\frac{\mu_1\mu_2}{40\mu_{\max}}\right)}.$$

We note that  $\tilde{\cal S}_1\cap \tilde{\cal S}_2 $  is a subset of  ${\cal S}_{ssc}.$ This is because for any $s$ which satisfies 
$$\min\left\{\eta-s_1,\sum_{i=2}^{b} s_i\right\} \leq \frac{(c_1+\mu_1)\log N}{\sqrt{N}},$$ we either have $$\eta-s_1\leq \frac{(c_1+\mu_1)\log N}{\sqrt{N}},$$ which implies 
$$s_1\geq \lambda+\left(\frac{1+\mu_1+\mu_2}{w_l}-\mu_1\right)\frac{\log N}{\sqrt{N}};$$ or 
$$\sum_{i=2}^{b} s_i\leq \eta-s_1,$$ which implies 
$$\sum_{i=1}^{b} s_i\leq \eta.$$ Note that $$\tilde{\cal S}_1\cap\left\{s\left|s_1\geq \lambda+\left(\frac{1+\mu_1+\mu_2}{w_l}-\mu_1\right)\frac{\log N}{\sqrt{N}}\right.\right\}={\cal S}_{ssc_1}$$ and 
$$\tilde{\cal S}_1\cap\left\{s\left|\sum_{i=1}^{b} s_i\leq \eta\right.\right\}\subseteq {\cal S}_{ssc_2}.$$ We, therefore, have  $$\tilde{\cal S}_1\cap \tilde{\cal S}_2\subseteq {\cal S}_{ssc},$$ and \begin{align*}
\mathbb P\left( S \notin {\mathcal S}_{ssc}\right) \leq  \mathbb P\left( S \notin \tilde{\mathcal S}_1\cap \tilde{\mathcal S}_2\right)
\leq \frac{3}{N^{2}},
\end{align*} so Lemma \ref{SSC:out} holds.
\end{proof}

We present ``iterative" state space collapse procedure in Lemma \ref{tailbound-u:s12} - Lemma \ref{SSC:s1 s2}.
\begin{lemma}[An Upper Bound on $S_{1,2}$]\label{tailbound-u:s12}
$$\mathbb P\left(S_{1,2} \leq \frac{p}{\mu_2} + \frac{\log N}{2\sqrt{N}}\right) \geq 1- e^{-\frac{\mu_1\mu_2\log^2 N}{40\mu_{\max}}}.$$ \qed
\end{lemma}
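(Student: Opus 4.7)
The plan is to apply Stein's method via an exponential Lyapunov function whose steady-state expectation can be controlled using $\mathbb E[GV(S)]=0$, Jensen's inequality, and the structural identity $1/\mu_1 + p/\mu_2 = 1$ (equivalently $p\mu_1 + \mu_2 = \mu_1\mu_2$) that underlies the Coxian-$2$ model. Specifically, I would set
$$V(s) \;=\; \exp\!\Bigl(\theta\sqrt{N}\bigl(s_{1,2} - p/\mu_2\bigr)\Bigr)$$
for a parameter $\theta>0$ to be chosen at the end. Inspecting the generator $G$ in \eqref{Gen:G}, only Event~4 (phase $1\!\to\!$ phase $2$, rate $p\mu_1 N s_{1,1}$) and Event~5 (phase $2\!\to$ departure, rate $\mu_2 N s_{1,2}$) change $s_{1,2}$, and both by $\pm 1/N$. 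Hence
$$GV(s) \;=\; V(s)\,N\Bigl[p\mu_1 s_{1,1}\bigl(e^{\theta/\sqrt{N}}-1\bigr) + \mu_2 s_{1,2}\bigl(e^{-\theta/\sqrt{N}}-1\bigr)\Bigr].$$

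Next I would use the Taylor bounds $e^{\theta/\sqrt{N}}-1 \leq \theta/\sqrt{N} + \theta^2/N$ and $e^{-\theta/\sqrt{N}}-1 \leq -\theta/\sqrt{N} + \theta^2/(2N)$ (valid when $\theta \leq \sqrt{N}$), combined with the physical constraint $s_{1,1}+s_{1,2}\leq 1$ (each server has at most one job in service). The drift term collapses using the Coxian identity:
$$p\mu_1 s_{1,1} - \mu_2 s_{1,2} \;\leq\; p\mu_1(1-s_{1,2}) - \mu_2 s_{1,2} \;=\; -\mu_1\mu_2\bigl(s_{1,2}-p/\mu_2\bigr),$$
while the quadratic coefficient $p\mu_1 s_{1,1} + \mu_2 s_{1,2}/2$ is bounded crudely by $\mu_{\max}$. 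Writing $\log V(s) = \theta\sqrt{N}(s_{1,2}-p/\mu_2)$ yields the clean drift inequality
$$GV(s) \;\leq\; V(s)\Bigl[-\mu_1\mu_2\log V(s) \;+\; C\,\mu_{\max}\,\theta^{2}\Bigr]$$
for an absolute constant $C$ (plus a vanishing cubic Taylor remainder, controlled since $\theta/\sqrt{N}\to 0$).

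The steady-state relation $\mathbb E[GV(S)]=0$ then gives $\mu_1\mu_2\,\mathbb E[V\log V] \leq C\mu_{\max}\theta^2\,\mathbb E[V]$. Applying Jensen's inequality to the convex function $x\log x$, namely $\mathbb E[V]\log\mathbb E[V] \leq \mathbb E[V\log V]$, produces the moment generating function bound
$$\log \mathbb E[V(S)] \;\leq\; C\,\tfrac{\mu_{\max}}{\mu_1\mu_2}\,\theta^{2}.$$
Finally, Markov's inequality with $\delta = \log N/(2\sqrt{N})$ gives
$$\mathbb P\!\Bigl(S_{1,2} > \tfrac{p}{\mu_2}+\delta\Bigr) \;=\; \mathbb P\bigl(V>e^{\theta\sqrt{N}\delta}\bigr) \;\leq\; \exp\!\Bigl(-\tfrac{\theta\log N}{2} + C\tfrac{\mu_{\max}}{\mu_1\mu_2}\theta^{2}\Bigr),$$
and optimizing $\theta = \Theta(\mu_1\mu_2\log N/\mu_{\max})$ produces an exponent of order $-\mu_1\mu_2\log^{2}N/\mu_{\max}$, which after tracking the explicit constants yields the claimed $e^{-\mu_1\mu_2\log^{2}N/(40\mu_{\max})}$.

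The main technical obstacle is the quantitative bookkeeping in the drift/variance split: we must ensure that $s_{1,1}\leq 1-s_{1,2}$ (used for the drift) is compatible with a variance coefficient of order $\mu_{\max}$, so that after Jensen the ratio produces precisely the $\mu_1\mu_2/\mu_{\max}$ scaling rather than a weaker $1/(\mu_1+\mu_2)$-type factor. The Coxian identity $p\mu_1+\mu_2=\mu_1\mu_2$ is essential here — without it, the drift would not be naturally centered at the target $p/\mu_2$. A secondary nuisance is ensuring that the cubic Taylor remainder and the error from $\theta/\sqrt{N}\leq 1$ are negligible; this requires the implicit assumption that $N$ is large enough that $\log N \cdot \mu_1\mu_2/\mu_{\max} \ll \sqrt{N}$, which is subsumed in the paper's global condition on $N$.
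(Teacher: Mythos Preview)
Your proof is correct and reaches the claimed bound (indeed with a slightly sharper implicit constant, of order $1/16$ rather than $1/40$). Both your argument and the paper's hinge on the same two structural ingredients: the constraint $s_{1,1}\le 1-s_{1,2}$ and the Coxian identity $p\mu_1+\mu_2=\mu_1\mu_2$, which together yield the key drift inequality $p\mu_1 s_{1,1}-\mu_2 s_{1,2}\le -\mu_1\mu_2\bigl(s_{1,2}-p/\mu_2\bigr)$.

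The genuine difference is in how the tail bound is extracted from this drift. The paper uses the \emph{linear} Lyapunov function $V(s)=s_{1,2}-p/\mu_2$, shows that $\nabla V(s)\le -\tfrac{\mu_1\mu_2}{4}\tfrac{\log N}{\sqrt{N}}$ whenever $V(s)\ge \tfrac{\log N}{4\sqrt{N}}$, and then invokes the general geometric-tail Lemma~\ref{tail-bound-cond} (a Bertsimas--Gamarnik--Tsitsiklis style result) with $\mathcal E=\mathcal S^{(N)}$. You instead build the exponential directly into the Lyapunov function and close the argument via $\mathbb E[GV]=0$, Jensen on $x\log x$, and Markov. Your route is self-contained and avoids the auxiliary lemma; the paper's route is more modular, since the same Lemma~\ref{tail-bound-cond} is reused with nontrivial excursion sets $\mathcal E$ in the subsequent iterative SSC steps (Lemmas~\ref{tailbound-l:s11}--\ref{SSC:s1 s2}), where the negative drift only holds on a proper subset of the state space. (A minor terminological quibble: what you are doing is a Lyapunov/MGF drift argument rather than Stein's method per se; the paper reserves ``Stein's method'' for the generator comparison with the fluid system in Section~\ref{sec:proof}.)
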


\begin{lemma}[A Lower Bound on $S_{1,1}$]\label{tailbound-l:s11}
$$\mathbb P\left(S_{1,1} \geq \frac{\lambda}{\mu_1} - \frac{\log N}{\sqrt{N}}\right) \geq 1-\frac{5}{\mu_1}\frac{\sqrt{N}}{\log N} e^{-\min\left(\frac{\mu_1}{16\mu_{\max}},\frac{\mu_1\mu_2}{40\mu_{\max}}\right)\log^2 N}.$$ \qed
\end{lemma}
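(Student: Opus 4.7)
The plan is to mirror the Stein/drift template already used for Lemma \ref{tailbound-u:s12}, now applied to $S_{1,1}$ and with Lemma \ref{tailbound-u:s12} supplied as an input. The first task is to compute the drift of $S_{1,1}$ from the five-event enumeration given earlier. The only transitions that change $S_{1,1}$ are: arrivals to an idle server (event 1 with $i=1$) and phase-2 completions at servers with at least two jobs (event 5 with $i\geq 2$), which each add $1/N$; and phase-1 departures at single-job servers (event 3 with $i=1$) and any phase-1-to-phase-2 transitions (event 4), which each subtract $1/N$. Summing, the drift of $S_{1,1}$ equals
$$\lambda(1-A_1(s)) + \mu_2 S_{2,2} + (1-p)\mu_1 S_{2,1} - \mu_1 S_{1,1},$$
using $Q_{1,1} = S_{1,1}-S_{2,1}$. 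When $S_{1,1}$ drops below $\lambda/\mu_1$, the $-\mu_1 S_{1,1}$ term is the dominant restoring force.

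Next, I would apply the generator identity $\mathbb E[Gf(S)]=0$ with $f(s)=g_1(s_{1,1})$, where $g_1$ is the Stein solution of a one-dimensional fluid problem analogous to \eqref{Gen:L} but using the penalty $h_1(x) = (\lambda/\mu_1 - \log N/\sqrt{N} - x)_+$ and drift $-\log N/\sqrt N$. The closed form parallels \eqref{eq:L}--\eqref{eq:Lder}, with $|g_1'|\leq 2/(\sqrt N\log N)$ near the kink and $|g_1''|\leq \sqrt N/\log N$, exactly as in Lemmas \ref{lemma:g'} and \ref{lemma:g''}. Taylor-expanding $f(s\pm e_{i,m})$ and collecting terms yields a decomposition of $\mathbb E[Gf(S)]$ analogous to \eqref{G-expansion-SSC}--\eqref{G-expansion-Gradient-2}: the leading contribution is $\mathbb E[g_1'(S_{1,1})(\lambda(1-A_1(S))+\mu_2 S_{2,2}+(1-p)\mu_1 S_{2,1}-\mu_1 S_{1,1})]$, plus an $O(1/(\sqrt N \log N))$ residual from the second-order terms.

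The heart of the argument is to turn this identity into a Markov-type bound on $\mathbb E[h_1(S_{1,1})]$. On the event $\{S_{1,1} < \lambda/\mu_1 - \log N/\sqrt N\}$, I would use $S_{2,m}\leq S_{1,m}$ together with Lemma \ref{tailbound-u:s12} to conclude that $\mu_2 S_{2,2}\leq p + \mu_2\log N/(2\sqrt N)$ outside a set of probability at most $\exp(-\mu_1\mu_2\log^2 N/(40\mu_{\max}))$. Combined with $(1-p)\mu_1 S_{2,1}\leq (1-p)\mu_1 S_{1,1}$, the drift inside the parenthesis is bounded below by a term of order $\log N/\sqrt N$ on the "good" event, and the exceptional mass from the bad event of Lemma \ref{tailbound-u:s12} can be absorbed into the exponent. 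Dividing by the order-$\log N/\sqrt N$ gap and inverting the $g_1'$ scaling produces the pre-factor $\tfrac{5}{\mu_1}\tfrac{\sqrt N}{\log N}$ in the statement; the exponent $\min(\mu_1/(16\mu_{\max}),\mu_1\mu_2/(40\mu_{\max}))$ arises by taking the worse of the direct drift-based concentration on $S_{1,1}$ (which contributes the $\mu_1/(16\mu_{\max})$ piece) and the inherited tail from $S_{1,2}$ (which contributes the $\mu_1\mu_2/(40\mu_{\max})$ piece). A final application of Markov's inequality on $h_1(S_{1,1})$ converts the moment bound into the desired tail probability.

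The main obstacle I anticipate is that the restoring drift of $S_{1,1}$ is only a first-moment statement and by itself yields at best a polynomial tail; upgrading it to the Gaussian-scale bound $e^{-c\log^2 N}$ requires careful composition with Lemma \ref{tailbound-u:s12}, and in particular tracking how the exceptional set $\{S_{1,2} > p/\mu_2 + \log N/(2\sqrt N)\}$ propagates through the generator identity without inflating the constants. Managing the coupling among $(S_{1,1},S_{2,1},S_{1,2},S_{2,2})$ so that the residual after applying the upper bound on $S_{1,2}$ remains a clean linear functional of $S_{1,1}$ is where the bookkeeping is most delicate.
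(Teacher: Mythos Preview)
Your drift computation for $S_{1,1}$ is correct, but the overall machinery you propose is not what produces the bound, and as written it would not deliver the stated exponent.

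The paper does \emph{not} run Stein's method here. It applies the Lyapunov function $V(s)=\lambda/\mu_1-s_{1,1}$ directly to the geometric tail bound of Lemma~\ref{tail-bound-cond} (the Bertsimas--Gamarnik--Tsitsiklis/Wang--Maguluri--Srikant lemma). That lemma converts a negative drift of size $\gamma$ on a set $\mathcal E$, plus a bounded drift $\delta$ off $\mathcal E$, into a tail $\alpha^j+\beta\,\mathbb P(S\notin\mathcal E)$ with $\alpha=q_{\max}\nu_{\max}/(q_{\max}\nu_{\max}+\gamma)$. Choosing $j=\Theta(\sqrt N\log N)$ steps and $\gamma=\Theta(\log N/\sqrt N)$ is exactly what manufactures the $e^{-c\log^2 N}$ factor; the $\mu_1/(16\mu_{\max})$ piece comes from $\alpha^j$, and the $\mu_1\mu_2/(40\mu_{\max})$ piece is inherited from $\mathbb P(S\notin\mathcal E)$ via Lemma~\ref{tailbound-u:s12}. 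Your Stein route with a quadratic $g_1$ yields only $\mathbb E[h_1(S_{1,1})]=O(1/(\sqrt N\log N))$, and Markov's inequality on that moment gives a polynomial tail, not $e^{-c\log^2 N}$. You correctly flag this as the obstacle, but ``careful composition with Lemma~\ref{tailbound-u:s12}'' cannot repair it: the exponential decay has to come from a geometric mechanism, which in this paper is Lemma~\ref{tail-bound-cond}.

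A second, smaller point: you misidentify how the upper bound on $S_{1,2}$ enters. In $\nabla V(s)=-\lambda(1-A_1(s))+\mu_1 s_{1,1}-(1-p)\mu_1 s_{2,1}-\mu_2 s_{2,2}$, the terms $-(1-p)\mu_1 s_{2,1}-\mu_2 s_{2,2}$ are already nonpositive and are simply dropped; there is no need to upper-bound $\mu_2 S_{2,2}$. The role of $s_{1,2}\le p/\mu_2+\log N/(2\sqrt N)$ is to force $s_1=s_{1,1}+s_{1,2}<1$, which guarantees idle servers and hence $A_1(s)\le 1/\sqrt N$ for policies in $\Pi$. That is what makes the drift $\le 1/\sqrt N-\lambda+\mu_1 s_{1,1}\le -(\mu_1/3)\log N/\sqrt N$ on $\mathcal E$.
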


\begin{lemma}[A Lower Bound on $S_{1,2}$]\label{tailbound-l:s12}
$$\mathbb P\left(S_{1,2} \geq \frac{p\lambda}{\mu_2} - \frac{\mu_1\log N}{\sqrt{N}}\right) \geq 1-\frac{16}{\mu_1\mu_2} \frac{N}{\log^2 N} e^{-\min\left(\frac{\mu_1}{16\mu_{\max}},\frac{\mu_2}{12\mu_{\max}},\frac{\mu_1\mu_2}{40\mu_{\max}}\right)\log^2 N}.$$ \qed
\end{lemma}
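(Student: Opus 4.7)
The plan is to prove the left-tail bound for $S_{1,2}$ by a Chernoff-style Stein argument, coupled to the already-established lower bound on $S_{1,1}$ from Lemma~\ref{tailbound-l:s11}. First I would observe that among the five transition types listed in the derivation of the generator $G$, only Event~4 (phase-1 completion followed by entering phase~2, at total rate $p\mu_1 N q_{i,1}$) and Event~5 (phase-2 completion, at total rate $\mu_2 N q_{i,2}$) change $s_{1,2}$, by $+1/N$ and $-1/N$ respectively. Consequently the generator applied to the linear function $s\mapsto s_{1,2}$ is exactly $p\mu_1 s_{1,1} - \mu_2 s_{1,2}$, and the steady-state identity $\mathbb E[G\,s_{1,2}]=0$ gives the mean relation $\mathbb E[S_{1,2}] = (p\mu_1/\mu_2)\,\mathbb E[S_{1,1}]$. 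Combined with Lemma~\ref{tailbound-l:s11} this already pins $\mathbb E[S_{1,2}]$ near $p\lambda/\mu_2$, matching the center of the bound being claimed; what remains is concentration.

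To upgrade the mean statement to a tail bound I would apply $\mathbb E[GV(S)] = 0$ to the exponential test function $V(s) = e^{-\theta s_{1,2}}$ with $\theta$ of order $\sqrt{N}\log N$. A direct computation gives
\[
GV(s) = e^{-\theta s_{1,2}}\Bigl[p\mu_1 N s_{1,1}\bigl(e^{-\theta/N}-1\bigr) + \mu_2 N s_{1,2}\bigl(e^{\theta/N}-1\bigr)\Bigr].
\]
A second-order Taylor expansion (valid since $\theta/N = o(1)$) rewrites this as $\theta\,e^{-\theta s_{1,2}}\bigl(\mu_2 s_{1,2} - p\mu_1 s_{1,1}\bigr) + R(s)$, with a remainder $|R(s)| = O(\theta^2\mu_{\max}/N)\,e^{-\theta s_{1,2}}$. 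Taking expectations and using $\mathbb E[GV(S)]=0$ then produces a linear identity whose dominant term is $\theta\,\mathbb E\bigl[e^{-\theta S_{1,2}}(\mu_2 S_{1,2} - p\mu_1 S_{1,1})\bigr]$.

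The key step is to split this expectation according to $\mathcal H = \{S_{1,1} \geq \lambda/\mu_1 - \log N/\sqrt N\}$. On $\mathcal H \cap \{S_{1,2} \leq c\}$ with $c = p\lambda/\mu_2 - \mu_1\log N/\sqrt N$, the quantity $p\mu_1 S_{1,1} - \mu_2 S_{1,2}$ is bounded below by a positive multiple of $\log N/\sqrt N$, producing a favorable negative drift for $V$; on $\mathcal H^c$ the integrand is controlled by $\mu_{\max}$ (using $S_{i,m}\in[0,1]$ and $e^{-\theta S_{1,2}}\leq 1$), and the probability of $\mathcal H^c$ is exponentially small by Lemma~\ref{tailbound-l:s11}. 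Combining the two contributions yields an upper bound on $\mathbb E[e^{-\theta S_{1,2}}]$, and Markov's inequality $\mathbb P(S_{1,2}\leq c) \leq e^{\theta c}\,\mathbb E[e^{-\theta S_{1,2}}]$ with the choice $\theta$ of order $\sqrt N\log N$ then gives the stated $\exp(-\Omega(\log^2 N))$ decay, the polynomial factor $N/\log^2 N$ arising from $e^{\theta c}$ after the cancellations.

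The main obstacle will be balancing the three error sources simultaneously: the second-order Taylor term of size $\theta^2\mu_{\max}/N$, the conditional contribution on $\mathcal H^c$ which inherits the $\min(\mu_1/(16\mu_{\max}),\mu_1\mu_2/(40\mu_{\max}))$ rate from Lemma~\ref{tailbound-l:s11}, and the magnitude of the negative-drift gain on $\mathcal H$ which scales with $\mu_2\log N/\sqrt N$ and naturally produces the new rate $\mu_2/(12\mu_{\max})$. Tuning $\theta$ and the Taylor bookkeeping so that the final exponent picks up precisely the three competing rates $\mu_1/(16\mu_{\max})$, $\mu_2/(12\mu_{\max})$, and $\mu_1\mu_2/(40\mu_{\max})$ is the delicate constants-tracking part, but it is mechanical once the splitting above is in place.
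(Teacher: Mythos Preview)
Your high-level plan is on target and matches the paper: the dynamics of $s_{1,2}$ are driven only by the phase-change and phase-$2$ departure events, yielding drift $p\mu_1 s_{1,1}-\mu_2 s_{1,2}$; you correctly condition on $\mathcal H=\{S_{1,1}\ge \lambda/\mu_1-\log N/\sqrt N\}$ from Lemma~\ref{tailbound-l:s11} so that this drift is strictly positive whenever $s_{1,2}$ lies below the target threshold. That is exactly the decomposition the paper uses.

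The gap is in the exponential-test-function mechanics. With $\theta\asymp\sqrt N\log N$ and $c=p\lambda/\mu_2-\mu_1\log N/\sqrt N$ bounded away from $0$, the Markov step $\mathbb P(S_{1,2}\le c)\le e^{\theta c}\,\mathbb E[e^{-\theta S_{1,2}}]$ carries a prefactor $e^{\theta c}=e^{\Theta(\sqrt N\log N)}$. On $\mathcal H^c$ you have no lower bound on $S_{1,2}$, so $e^{-\theta S_{1,2}}$ can be as large as $1$, contributing a term of order $\mathbb P(\mathcal H^c)=e^{-O(\log^2 N)}$ to the MGF; multiplied by $e^{\theta c}$ this blows up rather than producing the claimed polynomial factor. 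Separately, the identity $\mathbb E[G\,e^{-\theta S_{1,2}}]=0$ does not by itself bound $\mathbb E[e^{-\theta S_{1,2}}]$: it only relates this moment to $\mathbb E[S_{1,2}e^{-\theta S_{1,2}}]$, and you have not supplied the recursion or differential-inequality argument needed to close that loop.

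The paper avoids both issues by taking the \emph{linear} Lyapunov function $V(s)=p\lambda/\mu_2-s_{1,2}$, proving the two-sided drift bound $\nabla V\le -(\mu_2/2)\log N/\sqrt N$ on $\mathcal E=\mathcal H$ and $\nabla V\le 1$ on $\mathcal E^c$ (Lemma~\ref{driftbound-l:s12}), and then invoking the geometric-tail Lemma~\ref{tail-bound-cond}. That lemma turns negative drift above a threshold $B$ into decay $\alpha^j$ with only a \emph{polynomial} penalty $\beta=1+\delta/\gamma=O(\sqrt N/\log N)$ multiplying $\mathbb P(S\notin\mathcal E)$, so the $\mathcal H^c$ contribution stays harmless; the rate $\mu_2/(12\mu_{\max})$ then emerges from the choice $j=\sqrt N\log N/4$, and the other two rates are inherited from Lemma~\ref{tailbound-l:s11}.
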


\begin{lemma}[A Lower Bound on $S_1$ via $\sum_{i=2}^b S_i$]\label{SSC:s1 s2}
\begin{align*}
&\mathbb P\left(\min\left\{\lambda +\frac{k\log N}{\sqrt{N}}-S_1,\sum_{i=2}^{b} S_i\right\} \leq \frac{(c_1+\mu_1)\log N}{\sqrt{N}}\right) \\
&\geq 1-\frac{34}{\mu_1^2\mu_2} \frac{N^{1.5}}{\log^3 N} e^{-\min\left(\frac{\mu_1}{16\mu_{\max}},\frac{\mu_2}{12\mu_{\max}},\frac{\mu_1\mu_2}{40\mu_{\max}}\right)\log^2 N}\end{align*} 
for $\log N \geq \frac{1}{\min\{\mu_1,\mu_2\}},$ where $k = \left(1+\frac{w_u b}{w_l}\right)\left(\frac{1+\mu_1+\mu_2}{w_l}+2\mu_1\right)$ and $c_1 = \frac{w_u b}{w_l}\left(\frac{1+\mu_1+\mu_2}{w_l}+2\mu_1\right)+2\mu_1.$ \qed
\end{lemma}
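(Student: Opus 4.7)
The plan is to control the complementary event
\[
\mathcal B := \left\{S_1 < \eta - \tau\right\} \cap \left\{\sum_{i=2}^b S_i > \tau\right\}, \qquad \tau := \frac{(c_1+\mu_1)\log N}{\sqrt{N}},
\]
via a Lyapunov analysis of $V(s) := \sum_{i=2}^b s_i$ (total buffer occupancy per server). The constants $k$ and $c_1$ are calibrated so that $k-c_1-\mu_1 = \frac{1+\mu_1+\mu_2}{w_l}-\mu_1$; consequently, on $\{S_1 < \eta - \tau\}$ we have $S_1 < \lambda + \frac{1+\mu_1+\mu_2}{w_l}\frac{\log N}{\sqrt{N}}$, and the defining property of $\pi\in\Pi$ forces $A_1(S)\leq 1/\sqrt{N}$ throughout $\mathcal B$.

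First I would compute $GV$. The only transitions that move $V$ are arrivals routed to busy servers (rate $\lambda N A_1(s)$, jump $+1/N$) and job completions at servers with at least two jobs (aggregate rate $N[(1-p)\mu_1 s_{2,1}+\mu_2 s_{2,2}]$, jump $-1/N$), giving
\[
GV(s)=\lambda A_1(s)-(1-p)\mu_1 s_{2,1}-\mu_2 s_{2,2}\leq \lambda A_1(s)-w_l s_2.
\]
Since $s_2\geq s_3\geq\cdots\geq s_b$, we have $s_2\geq V(s)/b$, so on $\mathcal B$,
\[
GV(s)\leq \frac{1}{\sqrt{N}}-\frac{w_l(c_1+\mu_1)\log N}{b\sqrt{N}},
\]
which is a drift of order $-\Theta(\log N/\sqrt{N})$ once $\log N$ is sufficiently large---matching the scale of $\tau$ under jumps of size $1/N$.

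Next I would invoke the conditional tail bound of Lemma \ref{tail-bound-cond} to turn this drift estimate into a bound on $\mathbb P(\mathcal B)$. Because the strong negative drift above only holds once one restricts to the high-probability set where the Coxian phase populations $S_{1,1}$ and $S_{1,2}$ (and hence the departure rate) are close to their fluid targets, I would decompose $\mathbb P(\mathcal B)$ via a union bound: a main term to which the tail bound applies on the intersection with the a priori sets from Lemmas \ref{tailbound-l:s11} and \ref{tailbound-l:s12}, plus the failure probabilities of those a priori sets. Iterating three SSC steps each costs a factor of $\sqrt{N}/\log N$ in the prefactor (carried over from the previous lemmas), producing the total $N^{1.5}/\log^3 N$, while the tail bound itself yields $e^{-\Theta(\log^2 N)}$ from a Hajek-style exponential argument: with jumps of size $1/N$ at aggregate rate $\Theta(N)$ and a threshold $\tau$, the optimal exponential tilt $\theta=\Theta(\sqrt N\log N)$ gives $\theta\tau=\Theta(\log^2 N)$.

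The main obstacle is the conditional nature of the drift: $GV$ is only negative in a region of state space that itself must be shown to have high probability, so a plain Foster--Lyapunov tail bound does not apply. Navigating this is exactly what Lemma \ref{tail-bound-cond} automates, but matching its hypotheses requires careful bookkeeping to ensure the constants $k$ and $c_1$ are tuned so that $\{S_1<\eta-\tau\}$ implies the policy-class condition $A_1\leq 1/\sqrt{N}$ and so that the three prefactor contributions combine cleanly with the final $e^{-\min(\cdot)\log^2 N}$ factor stated in the lemma.
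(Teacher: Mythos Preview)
Your drift computation for $V(s)=\sum_{i=2}^b s_i$ is correct, but the way you plan to feed it into Lemma~\ref{tail-bound-cond} has a genuine gap. That lemma bounds $\mathbb P\bigl(V(S)\ge B+2\nu_{\max}j\bigr)$, and it requires the negative drift to hold on \emph{all} of $\{V(s)\ge B\}\cap\mathcal E$ with $\mathbb P(S\notin\mathcal E)$ small. Your drift bound uses $A_1(s)\le 1/\sqrt{N}$, which in turn needs $s_1<\eta-\tau$; but $\{V(s)\ge B\}$ imposes no constraint on $s_1$. If you try to absorb this by taking $\mathcal E\subseteq\{s_1<\eta-\tau\}$, then $\mathbb P(S\notin\mathcal E)\ge\mathbb P(S_1\ge\eta-\tau)$, and nothing so far shows that this probability is small---in fact, in heavy traffic $S_1$ concentrates near $\lambda$, and $\eta-\tau=\lambda+\bigl(\frac{1+\mu_1+\mu_2}{w_l}-\mu_1\bigr)\frac{\log N}{\sqrt N}$ may well lie below $S_1$ with probability bounded away from zero. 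So the $\beta\,\mathbb P(S\notin\mathcal E)$ term kills the bound.

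The paper fixes this by choosing the Lyapunov function to be the event indicator itself:
\[
V(s)=\min\Bigl\{\eta-s_1,\;\sum_{i=2}^b s_i\Bigr\}.
\]
Then $\{V(s)\ge c_1\log N/\sqrt N\}$ automatically forces \emph{both} $s_1\le\lambda+(k-c_1)\frac{\log N}{\sqrt N}$ (so $A_1(s)\le 1/\sqrt N$) and $\sum_{i\ge2}s_i\ge c_1\log N/\sqrt N$, and the tail event $\{V(S)\ge B+2\nu_{\max}j\}$ coincides exactly with the complement of the event in the lemma statement. The drift analysis splits into two cases according to which argument realizes the minimum. Case~1 ($V=\sum_{i\ge2}s_i$) is essentially your calculation. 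Case~2 ($V=\eta-s_1$) is where the phase lower bounds from Lemmas~\ref{tailbound-l:s11} and~\ref{tailbound-l:s12} actually enter: the drift of $-s_1$ involves the departure rate $(1-p)\mu_1 s_{1,1}+\mu_2 s_{1,2}$, and one needs $s_{1,1}\ge L_{1,1}$, $s_{1,2}\ge L_{1,2}$ to make it negative. So the set $\mathcal E$ in Lemma~\ref{tail-bound-cond} is $\{s_{1,1}\ge L_{1,1},\,s_{1,2}\ge L_{1,2}\}$, whose complement is already controlled by the earlier lemmas---this is the source of the $N^{1.5}/\log^3 N$ prefactor, not a union-bound decomposition of~$\mathcal B$ as you outlined.
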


{\bf Remark:} An important contribution of this paper is the iterative state collapse method we use to prove Lemma \ref{SSC:out}. The method continues refining the state space in which the system stays with a high probability at steady-state.  Figure~\ref{BoundS1-issc} illustrates the iterative state-space collapse in Lemma \ref{tailbound-u:s12} - Lemma \ref{tailbound-l:s12}. We first show in Lemma \ref{tailbound-u:s12} that with a high probability, $S_{1,2}\leq \frac{p}{\mu_2}+\frac{\log N}{2\sqrt{N}}$ at steady-state. Then in the reduced state space $\left(S_{1,2}\leq \frac{p}{\mu_2}+\frac{\log N}{2\sqrt{N}}\right)$, we further show in Lemma \ref{tailbound-l:s11} that $S_{1,1}\geq \frac{\lambda}{\mu_1}-\frac{\log N}{\sqrt{N}}$ with a high probability at steady state. We then further establish in Lemma \ref{tailbound-u:s12} that $S_{1,2}\geq \frac{p\lambda}{\mu_2}-\frac{\mu_1\log N}{\sqrt{N}}$ with a high probability at steady state in the reduced state space. 

\begin{figure}[!htbp]
  \centering
  \includegraphics[width=6.1in]{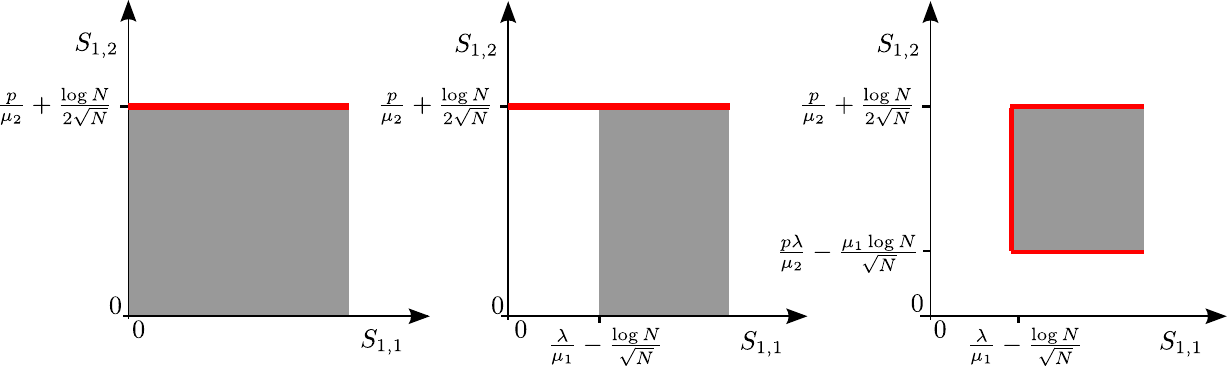}
  \caption{~Iterative State-Space Collapse to Show that  $S_{1,1}$ and $S_{1,2}$ are in a Smaller State-Space (the Gray Region) at Steady-State}
  \label{BoundS1-issc}
\end{figure}

\subsection{Proof of Theorem \ref{Thm:main}}
Based on Lemma \ref{SSC:s1 s2 negative} and Lemma \ref{SSC:out}, we can establish the following bound on \eqref{SSC0}, which is a upper bound on $J_1$ in \eqref{G-expansion-SSC},
\begin{align}
&\mathbb E\left[\frac{\sqrt{N}}{\log N} h\left(\sum_{i=1}^b\sum_{m=1}^2 S_{i,m}\right) \left(\lambda + \frac{\log N}{\sqrt{N}}-D_1\right)\mathbb{I}_{\sum_{i=1}^b S_{i} > \eta + \frac{1}{N}}\right] \notag\\
=&\mathbb E\left[\frac{\sqrt{N}}{\log N}\left(\sum_{i=1}^bS_i-\eta\right)\left(\lambda+\frac{\log N}{\sqrt{N}}-D_1\right)\mathbb{I}_{S \in \mathcal S_{ssc}}\mathbb{I}_{\sum_{i=1}^bS_i> \eta +\frac{1}{N}}\right]\nonumber\\
&+\mathbb E\left[\frac{\sqrt{N}}{\log N}\left(\sum_{i=1}^bS_i-\eta\right)\left(\lambda+\frac{\log N}{\sqrt{N}}-D_1\right)\mathbb{I}_{S \notin \mathcal S_{ssc}}\mathbb{I}_{\sum_{i=1}^bS_i> \eta +\frac{1}{N}}\right] \nonumber\\
\leq& \frac{3b}{N^{1.5}\log N},  \label{SSC-bound}
\end{align}
where the last inequality holds because we have used the facts that the average total number of jobs per server is at most $b$ and $\left(\lambda+\frac{\log N}{\sqrt{N}}-D_1\right)\mathbb{I}_{S \notin \mathcal S_{ssc}}\mathbb{I}_{\sum_{i=1}^bS_i> \eta +\frac{1}{N}} < 1.$

Based on Lemma \ref{Gradient:final}, we are ready to establish Theorem \ref{Thm:main} under JSQ.
\begin{align}
&\mathbb E\left[\max\left\{\sum_{i=1}^{b} S_i- \eta,0\right\}\right] = J_1 + J_2 + J_3 \leq\frac{3b}{N^{1.5}\log N}+\frac{6\mu_{\max}}{\sqrt{N}\log N}, \nonumber
\end{align} 
which implies $$\mathbb E\left[\max\left\{\sum_{i=1}^{b} S_i- \eta,0\right\}\right] \leq \frac{7\mu_{\max}}{\sqrt{N}\log N}.$$

\section{Conclusions}
In this paper, we considered load balancing under the Coxian-2 service time distribution in heavy traffic regimes. The Coxian-2 service time distribution does not have DHR and the system considered in this paper lacks monotonicity. We developed an iterative SSC and identified a policy set $\Pi,$ in which any policy can achieve asymptotic zero delay. The set $\Pi$ includes JSQ, JIQ, I1F and Po$d$ with $d=O\left(\frac{\log N}{1-\lambda}\right).$
The proposed Stein's method with iterative SCC is a general method that can be used for steady-state analysis of other queueing systems. The key idea of this method is to use an iterative SSC to reduce the state space to a much smaller subspace, in which the system can be well approximated with a simple fluid model, and the approximation error can be quantified using Stein's method. The iterative SSC approach iteratively reduces the state space by focusing on one direction at each iteration based on the system dynamics. This provides an intuitive way to establish SSC results that may be difficult to obtain at once. For example, it remains open whether the SSC result in this paper can be proved using a single Lyapunov function. This method has already inspired and been used in recent work \cite{WenZhoSri_20},
which developed zero-delay load balancing algorithms for networked servers assuming exponential service times.

We also would like to remark it is nontrivial to extend the results in this paper beyond Coxian-$2.$ The analysis in this paper utilized some simple yet critical properties of the Coxian-2 distribution: a job in phase-$1$ either departs or enters phase-$2$ immediately, and a job always starts its service from phase-$1$. In a Coxian-$M$ distribution or a general phase-type distribution, the dependence between jobs in different phases becomes more involved. In particular, it becomes more challenging to establish a result similar to Lemma \ref{tailbound-u:s12}. Recall that for a Coxian-2 distribution, $s_{1,2}$ decreases when its value is large because a large $s_{1,2}$ implies $s_{1,1}$ is small (because $s_{1,1}+s_{1,2}\leq 1$)  so the rate at which jobs move from phase-1 to phase-2 is  small. However,   for a Coxian-$M$ distribution, a large $s_{1,M}$ is not sufficient to guarantee that $s_{1,M-1}$ is small enough so that $s_{1,M}$ will decrease. For a general phase-type distribution, jobs in the queues can be in any phase, not necessarily in phase-1, which makes it difficult to show that $S_{1,M}$ will be close to its ``equilibrium''. However, we believe if a proper Lyapunov function could be found to establish a ``good'' upper bound on $S_{1,M},$ then we may apply the iterative approach in this paper to establish SSC and to extend the results in this paper to more general service distributions.

\section*{Acknowledgements} 
The authors are very grateful to Prof. Jim Dai for his insightful comments. The discussions with Jim had continuously stimulated the authors during the writing of this paper. This work was supported in part by NSF ECCS 1739344, CNS 2002608 and CNS 2001687. 

\bibliographystyle{abbrv}
\bibliography{inlab-refs}%

\appendix

\section{Gradient Bounds} \label{appendix:GB}
\subsection{Proof of Lemma \ref{lemma:g'}}
\begin{proof}
From the definition of $g$ function in \eqref{Gen:L}, we have
$$g'(x)=\frac{\max\left\{x-\eta, 0\right\}}{-\frac{\log N}{\sqrt{N}}}.$$
Hence, for any $x\in\left[ \eta  -\frac{2}{N}, \eta  +\frac{2}{N}\right],$ we have
$$|g'(x)| \leq \frac{|x-\eta|}{\frac{\log N}{\sqrt{N}}}\leq \frac{\frac{2}{N}}{\frac{\log N}{\sqrt{N}}}=\frac{2}{\sqrt{N}\log N}.$$
\end{proof}

\subsection{Proof of Lemma \ref{lemma:g''}}
\begin{proof}
From the definition of $g$ function in \eqref{Gen:L}, we have
$$g'(x)=\frac{\max\left\{x-\eta, 0\right\}}{-\frac{\log N}{\sqrt{N}}}.$$
For $x> \eta,$ we have
$$g'(x)=\frac{x-\eta}{-\frac{\log N}{\sqrt{N}}},$$ which implies
\begin{eqnarray*}
|g''(x)|=\left|\frac{1}{-\frac{\log N}{\sqrt{N}}}\right|=\frac{\sqrt{N}}{\log{N}}.
\end{eqnarray*}
\end{proof}

\subsection{Proof of Lemma \ref{Gradient:final}}

Note $(1-p)\mu_1 s_{1,1}+\mu_2s_{1,2} \leq \mu_{\max} s_{1} \leq \mu_{\max},$ then we have
\begin{align}
J_2 + J_3 
\leq &\mathbb E\left[\left(g'\left(\sum_{i=1}^{b} S_i\right)\left(-\frac{\log N}{\sqrt{N}}\right)+\lambda|g'(\xi)|+\mu_{\max}|g'(\tilde{\xi})|\right)\mathbb{I}_{\sum_{i=1}^{b} S_i \in \mathcal T_1}\right] \\
&+ \mathbb E\left[\frac{1}{N}\left(\lambda |g''(\eta)|  + \mu_{\max} |g''(\tilde{\eta})| \right) \mathbb{I}_{\sum_{i=1}^{b} S_i \in \mathcal T_2}\right] \\
\leq & \frac{4\mu_{\max}}{\sqrt{N}\log N} + \frac{\lambda+\mu_{\max}}{N} \frac{\sqrt{N}}{\log N} \\
\leq & \frac{6\mu_{\max}}{\sqrt{N}\log N}
\end{align}

\def \SSCsonestwonegative{\ref{SSC:s1 s2 negative}}
\section{Proof of Lemma \SSCsonestwonegative}
\label{app:inssc}

We consider the following problem $$\min_{(s_{1,1}, s_{1,2})\in \mathcal S_{ssc_1}}(1-p)\mu_1 s_{1,1}+\mu_2 s_{1,2},$$ which is a linear programming in terms of variables $s_{1,1}$ and $s_{1,2}$. Therefore, we only need to consider the extreme points of set $\mathcal S_{ssc_1}.$ In fact, from Figure~\ref{fig:final SSC}, it is clear that we only need to consider the following two extreme points. 

\begin{itemize}
\item Case $1$: $s_{1,1}=\frac{\lambda}{\mu_1}-\frac{\log N}{\sqrt{N}}$ and $s_{1,2}=\lambda +\left(\frac{1+\mu_1+\mu_2}{w_l}-\mu_1\right)\frac{\log N}{\sqrt{N}}-s_{1,1}=\frac{p\lambda}{\mu_2}+\left(\frac{1+\mu_1+\mu_2}{w_l}-\mu_1+1\right)\frac{\log N}{\sqrt{N}},$ where we use the fact $\frac{1}{\mu_1}+\frac{p}{\mu_2}=1.$ 
In this case, 
\begin{align}
(1-p)\mu_1s_{1,1}+\mu_2s_{1,2} 
=& \lambda + \left(-(1-p)\mu_1+\mu_2\left(\frac{1+\mu_1+\mu_2}{w_l}-\mu_1+1\right)\right) \frac{\log N}{\sqrt{N}}\\
\geq& \lambda + \left(-(1-p)\mu_1+\left(1+\mu_1-\mu_1\mu_2+2\mu_2\right)\right) \frac{\log N}{\sqrt{N}}\label{SSCCase1: negative 1}\\
=&\lambda + \left(1+\mu_2\right) \frac{\log N}{\sqrt{N}}\label{SSCCase1: negative 2}\\
\geq& \lambda + \frac{\log N}{\sqrt{N}} \label{SSCCase1: negative 5},
\end{align}
where \eqref{SSCCase1: negative 1} holds because $w_l=\min\{(1-p)\mu_1, \mu_2\}$ and \eqref{SSCCase1: negative 2} holds because $\frac{1}{\mu_1}+\frac{p}{\mu_2}=1.$

\item Case 2: $s_{1,1}=\lambda +\left(\frac{1+\mu_1+\mu_2}{w_l}-\mu_1\right)\frac{\log N}{\sqrt{N}}-s_{1,2}=\frac{\lambda}{\mu_1} +\frac{1+\mu_1+\mu_2}{w_l}\frac{\log N}{\sqrt{N}}$ and $s_{1,2}=\frac{p\lambda}{\mu_2}-\frac{\mu_1\log N}{\sqrt{N}}.$ At this extreme point, we have
\begin{align}
(1-p)\mu_1s_{1,1}+\mu_2s_{1,2} 
=& \lambda + \left((1-p)\mu_1\left(\frac{1+\mu_1+\mu_2}{w_l}\right)-\mu_1\mu_2\right) \frac{\log N}{\sqrt{N}} \label{SSCCase2: negative 3}\\
\geq& \lambda + \left({1+\mu_1+\mu_2}-\mu_1\mu_2\right) \frac{\log N}{\sqrt{N}} \label{SSCCase2: negative 4}\\
\geq& \lambda + \frac{\log N}{\sqrt{N}}, \label{SSCCase2: negative 5}
\end{align}  where 
\eqref{SSCCase2: negative 4} holds because $w_l=\min\{(1-p)\mu_1, \mu_2\}$ and  \eqref{SSCCase2: negative 5} holds because $\mu_1 +\mu_2 \geq p \mu_{1} + \mu_2 = \mu_1\mu_2.$
\end{itemize}

\def \SSCout{\ref{SSC:out}~}
\section{Proof of Iterative State Space Collapse} \label{App:IterSSC}

We present the iterative SSC approach for proving Lemma \ref{tailbound-u:s12}-Lemma \ref{SSC:s1 s2}. The first three lemmas are on the upper and lower bounds on $S_{1,1}$ and $S_{1,2},$ illustrated in Figure~ \ref{BoundS1}, which shows that  both $S_{1,1}$ and $S_{1,2}$ are close to its equilibrium values, in particular, with a high probability, 
$S_{1,1} \geq \frac{\lambda}{\mu_1}-\frac{\log N}{\sqrt{N}}$ and $S_{1,2} \geq \frac{p\lambda}{\mu_2}-\frac{\mu_1\log N}{\sqrt{N}}.$  
However, these two low bounds do not guarantee the total departure rate, which is $(1-p)\mu_1S_{1,1}+\mu_2S_{1,2},$ is larger than the arrival rate $\lambda.$
Therefore, we need Lemma \ref{SSC:s1 s2} to guarantee sufficient fraction of busy servers $S_1$ such that the total departure rate is "larger than" the arrival rate $\lambda.$ We therefore need Lemma \ref{SSC:s1 s2} to further establish a lower bound on $S_1$ unless the total normalized queue length $\sum_{i=1}^b S_i$ is small. 

\begin{figure}[!htbp]
  \centering
  \includegraphics[width=5.0in]{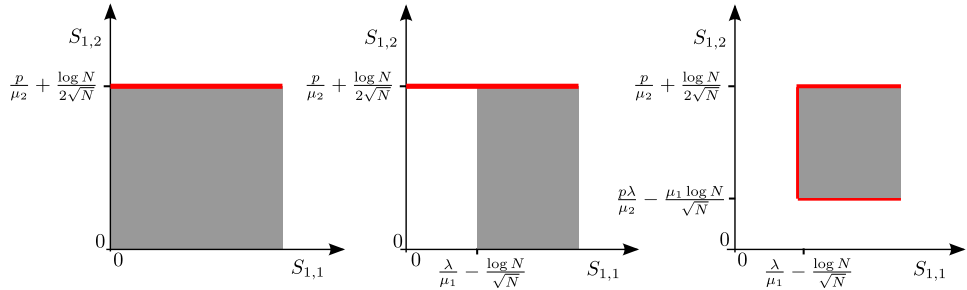}
  \caption{~Bounds (red lines) on $S_{1,1}$ and $S_{1,2}.$}
  \label{BoundS1}
\end{figure}

\subsection[]{A tail bound from \cite{WanMagSri_18}}

To prove the space space collapse results, we first introduce Lemma \ref{tail-bound-cond}, which will be repeatedly used to obtain probability tail bounds. Lemma \ref{tail-bound-cond} allows us to apply Lyapunov-drift-based heavy traffic analysis \cite{ErySri_12} to reduced state spaces instead of to the entire state space. The lemma is an extension of the tail bound in \cite{BerGamTsi_01}. This Lyapunov drift analysis on reduced state space enables us to iteratively refine the state space at steady state. The lemma was proven in \cite{WanMagSri_18}. We include the proof to make the paper self-contained. 

\begin{lemma}\label{tail-bound-cond}
Let $(S(t): t \geq 0)$ be a continuous-time Markov chain over a finite state space $\mathcal S$ and is irreducible, so it has a unique stationary distribution $\pi.$  Consider a Lyapunov function $V: \mathcal S \to R^{+}$ and define the drift of $V$ at a state $s \in \mathcal S$ as $$\nabla V(s) = \sum_{s' \in \mathcal S: s' \neq s} q_{s,s'} (V(s') - V(s)),$$ where $q_{s,s'}$ is the transition rate from $s$ to $s'.$ Assume
\begin{align*} 
\nu_{\max} :=& \max\limits_{s,s'\in \mathcal S: q_{s,s'} >0} |V(s') - V(s)|< \infty ~~ \text{and} ~~ \bar q := \max\limits_{s \in \mathcal S} (-q_{s,s}) < \infty 
\end{align*}
and define $$q_{\max}:=\max\limits_{s \in \mathcal S} \sum_{s' \in \mathcal S: V(s) < V(s')} q_{s,s'}.$$
If there exists a set $\mathcal E$ with $B>0$, $\gamma>0$, $\delta \geq 0$ such that the following conditions satisfy: 
\begin{enumerate}[(i)]
\item $\nabla V(s) \leq -\gamma$ when $V(s) \geq B$ and $s \in \mathcal E.$
\item $\nabla V(s) \leq \delta$ when $V(s) \geq B$ and $s \notin \mathcal E.$
\end{enumerate}
Then $$\mathbb P \left(V(S)\geq B + 2 \nu_{\max} j \right) \leq \alpha^j + \beta \mathbb P\left(S \notin \mathcal E\right), ~ \forall j \in \mathbb{N},$$
with $$\alpha = \frac{q_{\max}\nu_{\max}}{q_{\max}\nu_{\max} + \gamma} ~~\text{and}~~ \beta = \frac{\delta}{\gamma}+1.$$
\end{lemma}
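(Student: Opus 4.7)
The plan: This statement is a Lyapunov-drift tail bound at stationarity that extends the classical result of \cite{BerGamTsi_01} to accommodate a ``bad'' set $\mathcal{E}^c$ on which the drift is merely bounded above by $\delta$ rather than being strictly negative. I would key the argument to the discrete levels $B_j := B + 2\nu_{\max}j$; their spacing $2\nu_{\max}$ is chosen precisely so that no single transition can skip a level, given that all jumps are bounded in magnitude by $\nu_{\max}$.

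The key structural fact is that, from any state $s \in \mathcal{E}$ with $V(s) \geq B$, the probability that the very next state-change is an up-jump is at most $\alpha$. Indeed, the total rate of up-jumps from $s$ is at most $q_{\max}$ by definition, while the drift inequality $\nabla V(s) \leq -\gamma$ forces the rate-weighted total down-jump magnitude $\sum_{s': V(s')<V(s)} q_{s,s'}(V(s)-V(s'))$ to be at least $\gamma$, which combined with $|V(s')-V(s)| \leq \nu_{\max}$ implies a down-jump rate of at least $\gamma/\nu_{\max}$. Hence the conditional up-jump probability is at most $q_{\max}/(q_{\max}+\gamma/\nu_{\max}) = \alpha$.

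With this observation I would prove, by induction on $j$, a one-step recursion of the form $\mathbb{P}(V(S) \geq B_j) \leq \alpha\, \mathbb{P}(V(S) \geq B_{j-1}) + (\text{a bad-set correction})$ by applying the stationarity identity $\mathbb{E}[Gf(S)] = 0$ to an appropriately capped test function such as $f(s) = \min((V(s) - B_{j-1})^+,\, 2\nu_{\max})$. The truncation keeps $f$ bounded (so the identity applies on the finite state space), and the flux obtained splits cleanly: on states in $\mathcal{E}$ above $B_{j-1}$ one extracts the $\alpha$-contraction from the up-rate/down-rate comparison above, while on states in $\mathcal{E}^c$ above $B_{j-1}$ the drift bound $\delta$ yields an $O(\delta)$ overestimate of the extra upward mass transfer. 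Iterating the recursion $j$ times produces a geometric series whose bounded-in-$j$ bad-set coefficient is exactly $\beta = 1 + \delta/\gamma$, where the ``$+1$'' captures the direct $\mathcal{E}^c$-mass above $B_j$ and the ``$\delta/\gamma$'' captures the extra upcrossings that $\mathcal{E}^c$-states can drive across each level.

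The main obstacle will be the bookkeeping in the stationarity/flux identity: one must set things up so that each bad-set contribution appears at most once when summed across levels (otherwise the bound would accumulate a spurious factor of $j$), and one must verify that the geometric constant in the one-step recursion is sharp, namely exactly $\alpha = q_{\max}\nu_{\max}/(q_{\max}\nu_{\max}+\gamma)$ rather than a worse expression. This is the standard-but-delicate step where I would essentially transcribe the argument already given in \cite{WanMagSri_18}, whose notation and level decomposition match the setup here.
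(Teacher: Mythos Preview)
Your proposal is correct and follows essentially the same route as the paper: apply the stationarity identity to a truncated version of $V$, extract the one-step recursion $\mathbb{P}(V(S)\geq B_j)\leq\alpha\,\mathbb{P}(V(S)\geq B_{j-1})+\kappa\,\mathbb{P}(S\notin\mathcal{E})$ with $\kappa=(\delta+\gamma)/(q_{\max}\nu_{\max}+\gamma)$, and iterate, noting $\kappa/(1-\alpha)=\beta$. The only cosmetic difference is the choice of test function: the paper uses the one-sided cap $\hat V(s)=\max\{C,V(s)\}$ with $C=B+(2j-1)\nu_{\max}$, so that on $\{V(s)>C+\nu_{\max}\}$ the drift of $\hat V$ coincides with $\nabla V(s)$ and conditions (i)--(ii) apply directly, whereas your doubly-capped $f(s)=\min((V(s)-B_{j-1})^+,2\nu_{\max})$ would also work but introduces a second boundary layer to track.
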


\begin{proof}
Let $C \geq B - \nu_{\max}$ and consider Lyapunov function $$\hat V(s) = \max\{C, V(s)\}.$$ 

At steady state, we have 
\begin{align}
0 =& \sum_{ V(s) \leq C - \nu_{\max}} \pi(s) \sum_{s'\neq s} q_{s,s'}\left(\hat V(s')-\hat V(s)\right) \nonumber\\
   &+ \sum_{C - \nu_{\max} <  V(s) \leq C + \nu_{\max}} \pi(s) \sum_{s'\neq s} q_{s,s'}\left(\hat V(s')-\hat V(s)\right) \nonumber\\
   &+ \sum_{V(s)>C + \nu_{\max}} \pi(s) \sum_{s'\neq s} q_{s,s'}\left(\hat V(s')-\hat V(s)\right). \label{Drift_equilibrium}
\end{align}

Note $\nabla \hat V(s) = \sum_{s'\neq s} q_{s,s'}\left(\hat V(s')-\hat V(s)\right)$. We consider three terms in \eqref{Drift_equilibrium} as follows:

\begin{itemize}
\item The first term is $0$ because $V(s) \leq C - \nu_{\max}$ and $V(s') \leq C$ imply $\hat V(s) = \hat V(s') = C$.

\item The second term is bounded
\begin{align*}
&\sum_{C - \nu_{\max} <  V(s) \leq C + \nu_{\max}} \pi(s) \sum_{s'\neq s} q_{s,s'}\left(\hat V(s')-\hat V(s)\right) \\
\leq& \sum_{C - \nu_{\max} <  V(s) \leq C + \nu_{\max}} \pi(s) q_{\max} \nu_{\max} \\
\leq& q_{\max} \nu_{\max} \left( \mathbb P( V(S) > C - \nu_{\max}) - \mathbb P( V(S) > C + \nu_{\max}) \right)
\end{align*}

\item The third term is divided into two regions $s \in \mathcal E$ and $s \notin \mathcal E$ 
\begin{align*}
&\sum_{V(s)>C + \nu_{\max}} \pi(s) \sum_{s'\neq s} q_{s,s'}\left(\hat V(s')-\hat V(s)\right) \\
=& \mathop{\sum_{V(s)>C + \nu_{\max}}}_{s \in \mathcal E} \pi(s) \sum_{s'\neq s} q_{s,s'}\left(\hat V(s')-\hat V(s)\right) + \mathop{\sum_{V(s)>C + \nu_{\max}}}_{s \notin \mathcal E} \pi(s) \sum_{s'\neq s} q_{s,s'}\left(\hat V(s')-\hat V(s)\right) \\
\leq& -\gamma \mathbb P\left(V(S)>C + \nu_{\max}, s \in \mathcal E\right) + \delta \mathbb P\left(V(S)>C + \nu_{\max}, s \notin \mathcal E\right)\\
=& -\gamma \mathbb P\left(V(S)>C + \nu_{\max}\right) + (\delta + \gamma) \mathbb P\left(V(S)>C + \nu_{\max}, s \notin \mathcal E\right)
\end{align*}
where the inequality holds because of two conditions (i) and (ii).
\end{itemize}

Combining three terms above, we have 
\begin{align*}
&(q_{\max} \nu_{\max} + \gamma) \mathbb P( V(S) > C + \nu_{\max}) \\
\leq& q_{\max} \nu_{\max}  \mathbb P( V(S) > C - \nu_{\max}) + (\delta + \gamma) \mathbb P\left(V(S)>C + \nu_{\max}, S \notin \mathcal E\right)
\end{align*}
which implies
\begin{align*}
&\mathbb P( V(S) > C + \nu_{\max}) \\
\leq& \frac{q_{\max} \nu_{\max}}{q_{\max} \nu_{\max} + \gamma}  \mathbb P( V(S) > C - \nu_{\max}) + \frac{\delta + \gamma}{q_{\max} \nu_{\max} + \gamma} \mathbb P\left(V(S)>C + \nu_{\max}, S \notin \mathcal E\right) \\
\leq& \frac{q_{\max} \nu_{\max}}{q_{\max} \nu_{\max} + \gamma}  \mathbb P( V(S) > C - \nu_{\max}) + \frac{\delta + \gamma}{q_{\max} \nu_{\max} + \gamma} \mathbb P\left(S \notin \mathcal E \right) \\
=&  \alpha \mathbb P( V(S) > C - \nu_{\max}) + \kappa \mathbb P\left(S \notin \mathcal E\right),
\end{align*}
where $$\alpha = \frac{q_{\max}\nu_{\max}}{q_{\max}\nu_{\max} + \gamma} ~~\text{and}~~ \kappa = \frac{\delta+\gamma}{q_{\max}\nu_{\max} + \gamma}.$$

Let $C = B + (2j -1)\nu_{\max}, \forall j \in \mathbb{N}$ and we have 
\begin{align}
&\mathbb P\left( V(S) > B + 2\nu_{\max}j\right) \nonumber\\
\leq&  \alpha \mathbb P\left( V(S) > B + 2(j-1)\nu_{\max}\right) + \kappa \mathbb P\left(S \notin \mathcal E\right) \label{recursion}
\end{align}

By recursively using the inequality \eqref{recursion}, we have
\begin{align*}
\mathbb P\left( V(S) > B + 2\nu_{\max}j\right) 
\leq&  \alpha^j + \kappa \mathbb P\left(S \notin \mathcal E\right) \sum_{i=0}^{j} \alpha^i  \\
\leq&  \alpha^j + \frac{\kappa}{1 - \alpha} \mathbb P\left(S \notin \mathcal E\right) \\
=&  \alpha^j + \beta \mathbb P\left(S \notin \mathcal E\right) 
\end{align*}
\end{proof}

As mentioned above, Lemma \ref{tail-bound-cond} is an extension of Theorem 1 in \cite{BerGamTsi_01}, where $\mathcal E = \mathcal S^{(N)}$ is the entire state space and $\mathbb P\left(S \notin \mathcal E\right)=0.$
As suggested in Lemma \ref{tail-bound-cond}, constructing proper Lyapunov functions are critical to establish the tail bounds. In the following lemmas, we construct a sequence of Lyapunov functions and apply Lemma \ref{tail-bound-cond} to establish SSC results. 
 
\subsection{Proof of Lemma \ref{tailbound-u:s12}: An upper bound on $S_{1,2}.$}

To prove Lemma \ref{tailbound-u:s12}, we first establish a Lyaponuv drift analysis for $\mathcal E = \mathcal S^{(N)}$ (the entire state space) in Lemma \ref{driftbound-u:s12}.

\begin{lemma}\label{driftbound-u:s12}
Consider Lyapunov function $$V(s) = s_{1,2} - \frac{p}{\mu_2}.$$ When $V(s) \geq \frac{\log N}{4\sqrt{N}},$ we have $$\nabla V(s) \leq -\frac{\mu_1\mu_2}{4}\frac{\log N}{\sqrt{N}}.$$
\end{lemma}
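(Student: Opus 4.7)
The plan is to compute $\nabla V(s)$ directly from the CTMC transitions listed in Events 1--5 and then exploit two standard facts: the physical constraint $s_{1,1}+s_{1,2}\le 1$ and the normalization $1/\mu_1+p/\mu_2=1$. Since $V(s)=s_{1,2}-p/\mu_2$ only involves $s_{1,2}=\sum_{j\ge 1}q_{j,2}$, I would first go through the five event types and identify which ones actually change $s_{1,2}$. Events 1 and 3 do not touch the second column of $s$. Event 2 with $i\ge 2$ increases $s_{i,2}$ but leaves $s_{1,2}$ unchanged (the affected server was already counted), and Event 2 with $i=1$ has rate zero because a server with $0$ jobs cannot be serving a phase-$2$ job. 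The only events that move $s_{1,2}$ by $\pm 1/N$ are therefore Event 4 (phase-$1$ to phase-$2$ transitions, $+1/N$) and Event 5 (a phase-$2$ completion, $-1/N$).

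Aggregating the rates over $i$, Event 4 contributes $\sum_{i}\mu_1 p N q_{i,1}=\mu_1 p N s_{1,1}$ and Event 5 contributes $\sum_i \mu_2 N q_{i,2}=\mu_2 N s_{1,2}$. Each transition changes $V$ by $\pm 1/N$, so
\[
\nabla V(s)=\mu_1 p\, s_{1,1}-\mu_2\, s_{1,2}.
\]
Next I would plug in the constraint $s_{1,1}\le 1-s_{1,2}$ (which follows from $s_1\le 1$ and $s_{0,2}=0$) to eliminate $s_{1,1}$, yielding
\[
\nabla V(s)\le \mu_1 p-(\mu_1 p+\mu_2)\,s_{1,2}.
\]

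The final step is to substitute $s_{1,2}\ge p/\mu_2+\log N/(4\sqrt{N})$, which is equivalent to $V(s)\ge \log N/(4\sqrt{N})$. The constant term collapses via the normalization $1/\mu_1+p/\mu_2=1$: indeed, $\mu_1 p-(\mu_1 p+\mu_2)\cdot p/\mu_2=\mu_1 p(1-p/\mu_2)-p=\mu_1 p\cdot(1/\mu_1)-p=0$. So the drift reduces cleanly to
\[
\nabla V(s)\le -(\mu_1 p+\mu_2)\,\frac{\log N}{4\sqrt{N}}.
\]
Finally, the same identity $p/\mu_2=1-1/\mu_1$ gives $\mu_1 p=\mu_2(\mu_1-1)$, hence $\mu_1 p+\mu_2=\mu_1\mu_2$, and the claimed bound $\nabla V(s)\le -\mu_1\mu_2\log N/(4\sqrt{N})$ follows.

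There is no real obstacle here; the proof is a short drift calculation. The only subtlety worth double-checking is the careful bookkeeping of which events change $s_{1,2}$ (in particular that Event 2 does \emph{not} affect $s_{1,2}$ for $i\ge 2$, and that Event 5 with $i=1$ still decreases $s_{1,2}$ even though the server becomes idle), together with the clean algebraic identity $\mu_1 p+\mu_2=\mu_1\mu_2$ that is hidden inside the mean-service-time normalization and that is exactly what matches the $\mu_1\mu_2/4$ constant in the statement.
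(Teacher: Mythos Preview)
Your proof is correct and follows essentially the same approach as the paper: compute $\nabla V(s)=p\mu_1 s_{1,1}-\mu_2 s_{1,2}$, bound $s_{1,1}\le 1-s_{1,2}$, and use the identity $p\mu_1+\mu_2=\mu_1\mu_2$ from the normalization $1/\mu_1+p/\mu_2=1$. Your event-by-event bookkeeping for the drift is more explicit than what the paper writes down, but the argument and the algebra match line for line.
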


\begin{proof}
When $V(s) = s_{1,2} - \frac{p}{\mu_2} \geq \frac{\log N}{4\sqrt{N}},$ we have 
\begin{align}
\nabla V(s) =& p\mu_1s_{1,1} - \mu_2s_{1,2} \label{s12-up-1}\\ 
       \leq& p\mu_1 - (p\mu_1+\mu_2)s_{1,2} \label{s12-up-2}\\
          =& \mu_1(p - \mu_2s_{1,2}) \leq -\frac{\mu_1\mu_2}{4}\frac{\log N}{\sqrt{N}} \label{s12-up-3} 
\end{align}
\eqref{s12-up-1} to \eqref{s12-up-2} holds because $s_{1,1} = s_{1} - s_{1,2} \leq 1 - s_{1,2}$ (note this structure is simple yet critical in proving Lemma \ref{driftbound-u:s12} and driving iterative SSC in Figure \ref{fig:roadmap}); \eqref{s12-up-2} to \eqref{s12-up-3} holds because $\frac{1}{\mu_1} + \frac{p}{\mu_2} = 1$ implies $p\mu_1+\mu_2=\mu_1\mu_2.$
\end{proof}

From Lemma \ref{driftbound-u:s12}, we know $B = \frac{\log N}{4\sqrt{N}}$ and $\gamma = \frac{\mu_1\mu_2}{4}\frac{\log N}{\sqrt{N}}.$  According to the definition of $q_{\max}$ and $\nu_{\max},$ we have $q_{\max} = \mu_{\max}N$ and $\nu_{\max} = \frac{1}{N}.$ Since $\mathcal E= \mathcal S^{(N)}$ is the entire space, then $\mathbb P\left(S \notin \mathcal E\right)=0,$ we use Lemma \ref{tail-bound-cond} (or Theorem 1 in \cite{BerGamTsi_01}) to obtain the following tail bound with $j=\frac{\sqrt{N}\log N}{8},$
\begin{align}
\mathbb P\left(V(S) \geq B + 2 \nu_{\max}j\right) 
=& \mathbb P\left(S_{1,2} - \frac{p}{\mu_2} \geq \frac{\log N}{2\sqrt{N}}\right) \label{s12-tail-1}\\
\leq& \left(\frac{1}{1 + \frac{\mu_1\mu_2}{4\mu_{\max}}\frac{\log N}{\sqrt{N}}}\right)^{\frac{\sqrt{N}\log N}{8}} \label{s12-tail-2}\\
\leq& \left(1 - \frac{\mu_1\mu_2}{5\mu_{\max}}\frac{\log N}{\sqrt{N}}\right)^{\frac{\sqrt{N}\log N}{8}} \label{s12-tail-3}\\
\leq& e^{-\frac{\mu_1\mu_2\log^2 N}{40\mu_{\max}}} \label{s12-tail-4}                             
\end{align}
\begin{itemize}
\item \eqref{s12-tail-1} holds by substituting $B=\frac{\log N}{4\sqrt{N}},$ $\nu_{\max}=\frac{1}{N}$ and $j=\frac{\sqrt{N}\log N}{8};$
\item \eqref{s12-tail-1} to \eqref{s12-tail-2} holds based on Lemma \ref{driftbound-u:s12};
\item \eqref{s12-tail-2} to \eqref{s12-tail-3} holds because $\frac{\mu_1\mu_2}{\mu_{\max}} \leq \frac{\sqrt{N}}{\log N}$ for a large $N$ satisfying \eqref{N-cond}. 
\end{itemize}

\subsection{Proof of Lemma \ref{tailbound-l:s11}: A lower bound on $S_{1,1}.$}

To prove Lemma \ref{tailbound-l:s11}, we first establish a Lyaponuv drift analysis in Lemma \ref{driftbound-l:s11}.

\begin{lemma}\label{driftbound-l:s11}
Consider Lyapunov function 
\begin{align}
V(s) = \frac{\lambda}{\mu_1} - s_{1,1}. \label{Ly:s11}
\end{align} We have 
\begin{itemize}
\item $\nabla V(s) \leq - \frac{\mu_1}{3}\frac{\log N}{\sqrt{N}},$
when $$V(s) \geq \frac{\log N}{2 \sqrt{N}} ~~\text{and}~~ s_{1,2} \leq \frac{p}{\mu_2}+\frac{\log N}{2\sqrt{N}};
$$
\item $\nabla V(s) \leq 1,$
when $$V(s) \geq \frac{\log N}{2 \sqrt{N}} ~~\text{and}~~ s_{1,2} \geq \frac{p}{\mu_2}+\frac{\log N}{2\sqrt{N}}.$$
\end{itemize}
\end{lemma}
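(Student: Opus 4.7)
The plan is to first write $\nabla V(s)$ as a closed-form expression by enumerating the CTMC transitions that change $s_{1,1}$, and then to analyze the two bullets separately. Going through the five event types in the generator, four of them move $s_{1,1}$: an arrival routed to an idle server contributes $+1/N$ at rate $\lambda N(1-A_1(s))$; an Event 5 firing with $i\geq 2$ contributes $+1/N$ at total rate $\mu_2 N s_{2,2}$; an Event 3 firing with $i=1$ contributes $-1/N$ at rate $(1-p)\mu_1 N q_{1,1}$; and Event 4 summed over all $i\geq 1$ contributes $-1/N$ at total rate $p\mu_1 N s_{1,1}$. Substituting $q_{1,1}=s_{1,1}-s_{2,1}$ and negating (since $V$ decreases when $s_{1,1}$ grows), I obtain
\[
\nabla V(s)=\mu_1 s_{1,1}-\lambda(1-A_1(s))-(1-p)\mu_1 s_{2,1}-\mu_2 s_{2,2}.
\]

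For the first bullet I would drop the two non-positive terms $-(1-p)\mu_1 s_{2,1}$ and $-\mu_2 s_{2,2}$, giving $\nabla V(s)\leq \mu_1 s_{1,1}-\lambda+\lambda A_1(s)$. The hypothesis $V(s)\geq \log N/(2\sqrt N)$ yields $\mu_1 s_{1,1}\leq \lambda-\mu_1\log N/(2\sqrt N)$, and the hypothesis $s_{1,2}\leq p/\mu_2+\log N/(2\sqrt N)$ combined with the normalization $1/\mu_1+p/\mu_2=1$ gives $s_1=s_{1,1}+s_{1,2}\leq \lambda/\mu_1+p/\mu_2=1-(1-\lambda)/\mu_1$. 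This upper bound on $s_1$ sits at or below the $\Pi$-threshold $\lambda+(1+\mu_1+\mu_2)\log N/(w_l\sqrt N)$, so the defining property of $\Pi$ yields $A_1(s)\leq 1/\sqrt N$ (for the four named policies listed after Theorem \ref{Thm:main}, even stronger bounds hold: $A_1(s)=0$ for JSQ, JIQ, and I1F whenever $s_1<1$, and $A_1(s)\leq s_1^d\leq 1/N$ for Po$d$ with $d\geq \mu_1 N^\alpha\log N$). Using the lower bound in \eqref{N-cond} to absorb $\lambda A_1(s)$ into $\mu_1\log N/(6\sqrt N)$, the three pieces combine to $\nabla V(s)\leq -\mu_1\log N/(2\sqrt N)+\mu_1\log N/(6\sqrt N)=-\mu_1\log N/(3\sqrt N)$, as claimed.

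For the second bullet no use of the $s_{1,2}$ hypothesis is needed: dropping every non-positive term in the drift identity (including $-\lambda(1-A_1(s))\leq 0$) gives $\nabla V(s)\leq \mu_1 s_{1,1}$, and $V(s)\geq \log N/(2\sqrt N)$ forces $\mu_1 s_{1,1}\leq \lambda\leq 1$. The main technical nuisance is thus in the first bullet, namely showing that the upper bound $s_1\leq 1-(1-\lambda)/\mu_1$ really falls inside the $\Pi$-threshold region; when $\mu_1=1$ (i.e., $p=0$) this is immediate since $s_1\leq \lambda$, but for $\mu_1>1$ the residual gap of order $N^{-\alpha}(1-1/\mu_1)$ versus $\log N/\sqrt N$ must be handled either by tightening the use of \eqref{N-cond} or, more directly, by the policy-specific structural bounds on $A_1$ rather than the generic $\Pi$-condition alone.
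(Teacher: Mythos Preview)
Your proposal is correct and follows essentially the same route as the paper: both compute $\nabla V(s)=\mu_1 s_{1,1}-\lambda(1-A_1(s))-(1-p)\mu_1 s_{2,1}-\mu_2 s_{2,2}$, drop the non-positive $s_{2,\cdot}$ terms, and combine $\mu_1 s_{1,1}\leq \lambda-\mu_1\log N/(2\sqrt N)$ with $A_1(s)\leq 1/\sqrt N$. The paper obtains the latter by writing $s_1\leq \lambda/\mu_1+p/\mu_2=1-1/(\mu_1 N^\alpha)$ and asserting this lies below the $\Pi$-threshold; your closing caveat that this step is delicate when $\mu_1>1$ is well taken, and the policy-specific fix you propose ($s_1<1\Rightarrow A_1=0$ for JSQ/JIQ/I1F, and $s_1\leq 1-1/(\mu_1 N^\alpha)\Rightarrow A_1\leq s_1^d\leq 1/N$ for Po$d$) is exactly what makes the bound rigorous for the four named policies.
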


\begin{proof}
Assuming $s_{1,2} \leq \frac{p}{\mu_2}+\frac{\log N}{2\sqrt{N}}$ and $\frac{\lambda}{\mu_1} - s_{1,1} \geq \frac{\log N}{2 \sqrt{N}},$ 
we have $$s_1 = s_{11} + s_{12} \leq \frac{p}{\mu_2} + \frac{\lambda}{\mu_1} = 1 - \frac{1}{\mu_1 N^\alpha} \leq \lambda + \frac{1+\mu_1+\mu_2}{w_l}\frac{\log N}{\sqrt{N}}<1.$$
Therefore, the drift of $V(s)$ is
\begin{align} 
\nabla V(s) =& -\lambda {(1-A_1(s))} + \mu_1s_{1,1} - (1-p)\mu_1s_{2,1} - \mu_2 s_{2,2} \label{s11-lower-1}\\
         \leq&  {\frac{1}{\sqrt{N}}}-\lambda + \mu_1s_{1,1} - (1-p)\mu_1s_{2,1} - \mu_2 s_{2,2} \label{s11-lower-2}\\
         \leq&  {\frac{1}{\sqrt{N}}}-\lambda  + \mu_1s_{1,1} \label{s11-lower-3}\\
         \leq&  {\frac{1}{\sqrt{N}}}-\frac{\mu_1}{2}\frac{\log N}{\sqrt{N}} \label{s11-lower-4}\\
         \leq&  -\frac{\mu_1}{3}\frac{\log N}{\sqrt{N}} \label{s11-lower-5},
\end{align} where
\begin{itemize}
\item \eqref{s11-lower-1} to \eqref{s11-lower-2} holds because {$A_1(s) \leq \frac{1}{\sqrt{N}}$} under any policy in $\Pi$;
\item \eqref{s11-lower-3} to \eqref{s11-lower-4} holds because $s_{1,1} \leq \frac{\lambda}{\mu_1}-\frac{\log N}{2 \sqrt{N}}.$
\end{itemize}

Assuming $s_{12} > \frac{p}{\mu_2}+\frac{\log N}{2\sqrt{N}}$ and $s_{1,1} \leq \frac{\lambda}{\mu_1}-\frac{\log N}{2 \sqrt{N}},$ we have 
$$\nabla V(s) = -\lambda {(1-A_1(s))} + \mu_1s_{1,1} - (1-p)\mu_1s_{2,1} - \mu_2 s_{2,2} \leq  \mu_1 s_{1,1} < 1.$$ 
\end{proof}

Let $\mathcal E = \left\{s ~|~ s \leq \frac{p}{\mu_2}+\frac{\log N}{2\sqrt{N}}\right\}.$  we have $V(s) = \frac{\lambda}{\mu_1} - s_{1,1}$ satisfying two conditions:
\begin{itemize}
\item $\nabla V(s) \leq -\frac{\mu_1}{3}\frac{\log N}{\sqrt{N}}$ when $V(s) \geq \frac{\log N}{2 \sqrt{N}}$ and $s_{1,2} \in \mathcal E.$
\item $\nabla V(s) \leq 1$ when $V(s) \geq  \frac{\log N}{2 \sqrt{N}}$ and $s_{1,2} \notin \mathcal E.$
\end{itemize}
Define $B = \frac{\log N}{2 \sqrt{N}}$, $\gamma = \frac{\mu_1}{3}\frac{\log N}{\sqrt{N}},$ and $\delta = 1.$ Combining $q_{\max} \leq \mu_{\max}N$ and $\nu_{\max} \leq \frac{1}{N},$ we have $$\alpha \leq \frac{1}{1 + \frac{\mu_1}{3\mu_{\max}}\frac{\log N}{\sqrt{N}}} ~~\text{and}~~ \beta = \frac{1}{\frac{\mu_1}{3}\frac{\log N}{\sqrt{N}}}+1.$$ 

Based on Lemma \ref{tail-bound-cond} with $j=\frac{\sqrt{N}\log N}{4},$ we have
\begin{align}
\mathbb P\left(V(S) \geq B + 2 \nu_{\max} j\right) 
=& \mathbb P\left(\frac{\lambda}{\mu_1} - S_{1,1} \geq  \frac{\log N}{\sqrt{N}}\right) \label{s11-tail-1}\\
\leq& \left(\frac{1}{1 + \frac{\mu_1}{3\mu_{\max}}\frac{\log N}{\sqrt{N}}}\right)^{\frac{\sqrt{N}\log N}{4}} + \beta \mathbb P\left(S_{1,2} \notin {\mathcal E}\right)\label{s11-tail-2}\\
\leq& \left(1 - \frac{\mu_1}{4\mu_{\max}}\frac{\log N}{\sqrt{N}}\right)^{\frac{\sqrt{N}\log N}{4}} + \frac{4}{\mu_1}\frac{\sqrt{N}}{\log N} e^{-\frac{\mu_1\mu_2\log^2 N}{40\mu_{\max}}} \label{s11-tail-3}\\
\leq& e^{-\frac{\mu_1\log^2 N}{16\mu_{\max}}} + \frac{4}{\mu_1}\frac{\sqrt{N}}{\log N} e^{-\frac{\mu_1\mu_2\log^2 N}{40\mu_{\max}}} \label{s11-tail-4} \\
\leq& \frac{5}{\mu_1}\frac{\sqrt{N}}{\log N} e^{-\min\left(\frac{\mu_1}{16\mu_{\max}},\frac{\mu_1\mu_2}{40\mu_{\max}}\right)\log^2 N} \label{s11-tail-5},
\end{align} where
\begin{itemize}
\item \eqref{s11-tail-1} holds by substituting $B=\frac{\log N}{2\sqrt{N}},$ $\nu_{\max}=\frac{1}{N}$ and $j=\frac{\sqrt{N}\log N}{4};$
\item \eqref{s11-tail-1} to \eqref{s11-tail-2} holds based on Lemma \ref{driftbound-l:s11};
\item \eqref{s11-tail-2} to \eqref{s11-tail-3} holds because (i) in the first term in \eqref{s11-tail-2}, $\frac{\mu_1}{\mu_{\max}} \leq \frac{\sqrt{N}}{\log N}$ for a large $N$ satisfying \eqref{N-cond}, and (ii) the second term in \eqref{s11-tail-2} can be bounded by applying Lemma \ref{tailbound-u:s12}.
\end{itemize}

\subsection{Proof of Lemma \ref{tailbound-l:s12}: A lower bound on $S_{1,2}.$}

\begin{lemma}\label{driftbound-l:s12}
Consider Lyapunov function $$V(s) = \frac{p\lambda}{\mu_2}-s_{1,2}.$$ We have 
\begin{itemize}
\item $\nabla V(s) \leq -\frac{\mu_2}{2}\frac{\log N}{\sqrt{N}},$ when $$V(s) \geq \left(\frac{p \mu_1}{\mu_2}+\frac{1}{2}\right) \frac{\log N}{\sqrt{N}} ~~\text{and}~~ s_{1,1} \geq \frac{\lambda}{\mu_1} - \frac{\log N}{\sqrt{N}};$$
\item $\nabla V(s) \leq 1,$ when $$V(s) \geq \left(\frac{p \mu_1}{\mu_2}+\frac{1}{2}\right) \frac{\log N}{\sqrt{N}} ~~\text{and}~~ s_{1,1} \leq \frac{\lambda}{\mu_1} - \frac{\log N}{\sqrt{N}}.$$
\end{itemize}
\end{lemma}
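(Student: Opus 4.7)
The plan is to follow the same two-step template as Lemma \ref{driftbound-l:s11}: first derive a closed form for the drift $\nabla V(s)$ by reading off the transitions that move $s_{1,2}$ from the generator description in Section \ref{sec:proof}, and then plug in the hypotheses to handle the two cases. Inspecting the five transition events, arrivals (Events 1 and 2) never change $s_{1,2}$ (Event 1 touches only $s_{i,1}$, and Event 2 only fires for $i \geq 2$ and raises $s_{j,2}$ with $j \geq 2$); the pure phase-1 departures in Event 3 leave $s_{1,2}$ alone; Event 4 raises $s_{1,2}$ by $1/N$ whenever a phase-1 job at any busy server transitions to phase 2, contributing total rate $p\mu_1 N s_{1,1}$; and Event 5 lowers $s_{1,2}$ by $1/N$ whenever any phase-2 job departs, contributing total rate $\mu_2 N s_{1,2}$. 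Since $\frac{p\lambda}{\mu_2}$ is a constant, this yields
$$\nabla V(s) = -p\mu_1 s_{1,1} + \mu_2 s_{1,2}.$$

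For the first case, I would substitute the SSC-type hypothesis $s_{1,1} \geq \frac{\lambda}{\mu_1} - \frac{\log N}{\sqrt{N}}$ together with the upper bound on $s_{1,2}$ implied by $V(s) \geq \left(\frac{p\mu_1}{\mu_2} + \frac{1}{2}\right)\frac{\log N}{\sqrt{N}}$, namely $s_{1,2} \leq \frac{p\lambda}{\mu_2} - \left(\frac{p\mu_1}{\mu_2} + \frac{1}{2}\right)\frac{\log N}{\sqrt{N}}$. After multiplying through by $p\mu_1$ and $\mu_2$ respectively, the $\pm p\lambda$ terms cancel and the $\pm p\mu_1 \frac{\log N}{\sqrt{N}}$ terms also cancel, leaving exactly $-\frac{\mu_2}{2}\frac{\log N}{\sqrt{N}}$. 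The threshold $\frac{p\mu_1}{\mu_2} + \frac{1}{2}$ in the hypothesis is chosen precisely so that this cancellation produces a clean negative drift of magnitude $\frac{\mu_2}{2}\frac{\log N}{\sqrt{N}}$, in direct analogy with the Case-1 calculation in Lemma \ref{driftbound-l:s11}.

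For the second case, the lower bound on $s_{1,1}$ is not available, so I would simply discard the nonpositive term $-p\mu_1 s_{1,1}$ and bound $\nabla V(s) \leq \mu_2 s_{1,2}$. Because $V(s) \geq \left(\frac{p\mu_1}{\mu_2} + \frac{1}{2}\right)\frac{\log N}{\sqrt{N}} > 0$ forces $s_{1,2} < \frac{p\lambda}{\mu_2}$, this gives $\nabla V(s) < p\lambda < 1$. I do not anticipate a genuine obstacle in this lemma; the only care required is in correctly identifying which transitions touch $s_{1,2}$, and in particular noting that arrivals never do, which is why no policy-dependent term of the form $A_1(s)$ appears here, in contrast to Lemma \ref{driftbound-l:s11}. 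Once Lemma \ref{driftbound-l:s12} is established, combining it with Lemma \ref{tail-bound-cond} taking $\mathcal{E} = \{s : s_{1,1} \geq \frac{\lambda}{\mu_1} - \frac{\log N}{\sqrt{N}}\}$ and bounding $\mathbb{P}(S \notin \mathcal{E})$ via Lemma \ref{tailbound-l:s11} will deliver the tail bound in Lemma \ref{tailbound-l:s12}.
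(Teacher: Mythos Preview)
Your proposal is correct and follows essentially the same approach as the paper: compute the drift $\nabla V(s) = -p\mu_1 s_{1,1} + \mu_2 s_{1,2}$, substitute the two hypotheses in the first case to get the clean cancellation yielding $-\frac{\mu_2}{2}\frac{\log N}{\sqrt{N}}$, and in the second case drop the nonpositive term and use $\mu_2 s_{1,2} < p\lambda \leq 1$. Your additional remark that no $A_1(s)$ term appears because arrivals never move $s_{1,2}$ is accurate and a useful contrast with Lemma \ref{driftbound-l:s11}.
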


\begin{proof}
Assuming $V(s) = \frac{p\lambda}{\mu_2}-s_{1,2} \geq \left(\frac{p \mu_1}{\mu_2}+\frac{1}{2}\right)\frac{\log N}{\sqrt{N}}$ and $s_{1,1} \geq \frac{\lambda}{\mu_1} - \frac{\log N}{\sqrt{N}},$ we have 
\begin{align}
\nabla V(s) =& -(p\mu_1s_{1,1} - \mu_2s_{1,2}) \label{s12-lower-1} \\
         \leq& -\left(p\lambda - \frac{p\mu_1\log N}{\sqrt{N}} - \mu_2s_{1,2}\right) \label{s12-lower-2} \\
         \leq& - \frac{\mu_2}{2} \frac{\log N}{\sqrt{N}}, \label{s12-lower-3}
\end{align} where
\begin{itemize}
\item \eqref{s12-lower-1} to \eqref{s12-lower-2} holds because $s_{1,1} \geq \frac{\lambda}{\mu_1} - \frac{\log N}{\sqrt{N}};$ 
\item \eqref{s12-lower-2} to \eqref{s12-lower-3} holds because $s_{1,2} \leq \frac{p\lambda}{\mu_2} - \left(\frac{p \mu_1}{\mu_2}+\frac{1}{2}\right)\frac{\log N}{\sqrt{N}}.$
\end{itemize}

Next, assuming $\frac{p\lambda}{\mu_2}-s_{1,2} \geq \left(\frac{p \mu_1}{\mu_2}+\frac{1}{2}\right)\frac{\log N}{\sqrt{N}}$ and $s_{1,1} < \frac{\lambda}{\mu_1} - \frac{\log N}{\sqrt{N}},$ we have 
\begin{align}
\nabla V(s) = -(p\mu_1s_{1,1} - \mu_2s_{1,2}) \leq \mu_2s_{1,2} \leq p \lambda \leq 1. 
\end{align}
\end{proof}

Defining $\mathcal E = \left\{s ~|~ s \geq \frac{\lambda}{\mu_1} - \frac{\log N}{\sqrt{N}}\right\},$ we have $V(s) = \frac{p\lambda}{\mu_2} -s_{1,2}$ satisfying two conditions:
\begin{itemize}
\item $\nabla V(s) \leq -\frac{\mu_2}{2}\frac{\log N}{\sqrt{N}}$ when $V(s) \geq \left(\frac{p \mu_1}{\mu_2}+\frac{1}{2}\right) \frac{\log N}{\sqrt{N}}$ and $s_{1,1} \in \mathcal E.$
\item $\nabla V(s) \leq 1$ when $V(s) \geq \left(\frac{p \mu_1}{\mu_2}+\frac{1}{2}\right) \frac{\log N}{\sqrt{N}}$ and $s_{1,1} \notin \mathcal E.$
\end{itemize}
Define $B = \left(\frac{p \mu_1}{\mu_2}+\frac{1}{2}\right) \frac{\log N}{\sqrt{N}}$, $\gamma = \frac{\mu_2}{2}\frac{\log N}{\sqrt{N}}$ and $\delta = 1.$ Combining $q_{\max} = \mu_{\max}N$ and $\nu_{\max} = \frac{1}{N},$ we have $$\alpha \leq \frac{1}{1 + \frac{\mu_2}{2\mu_{\max}}\frac{\log N}{\sqrt{N}}} ~~\text{and}~~ \beta = \frac{2}{\mu_2}\frac{\sqrt{N}}{\log N} + 1.$$ 

Based on Lemma \ref{tail-bound-cond} with $j= \frac{\sqrt{N}\log N}{4}$, we have
\begin{align}
\mathbb P\left(V(S) \geq B + 2 \nu_{\max} j\right) 
=& \mathbb P\left( \frac{p\lambda}{\mu_2} -S_{1,2} \geq \left(\frac{p \mu_1}{\mu_2}+1\right) \frac{\log N}{\sqrt{N}}\right) \label{s12-lower-tail-1}\\
\leq& \left(\frac{1}{1 + \frac{\mu_2}{2\mu_{\max}}\frac{\log N}{\sqrt{N}}} \right)^{\frac{\sqrt{N}\log N}{4}} + \frac{2}{\mu_2}\frac{\sqrt{N}}{\log N} \mathbb P\left(S_{1,1} \notin \mathcal E \right)\label{s12-lower-tail-2}\\
\leq& \left(1 - \frac{\mu_2}{3\mu_{\max}}\frac{\log N}{\sqrt{N}}\right)^{\frac{\sqrt{N}\log N}{4}} + \frac{3}{\mu_2}\frac{\sqrt{N}}{\log N} \mathbb P\left(S_{1,1} \notin \mathcal E \right)\label{s12-lower-tail-3}\\                          
\leq& e^{-\frac{\mu_2\log^2 N}{12\mu_{\max}}} + \frac{15}{\mu_1\mu_2} \frac{N}{\log^2 N} e^{-\min\left(\frac{\mu_1}{16\mu_{\max}},\frac{\mu_1\mu_2}{40\mu_{\max}}\right)\log^2 N} \label{s12-lower-tail-4}\\
\leq& \frac{16}{\mu_1\mu_2} \frac{N}{\log^2 N} e^{-\min\left(\frac{\mu_1}{16\mu_{\max}},\frac{\mu_2}{12\mu_{\max}},\frac{\mu_1\mu_2}{40\mu_{\max}}\right)\log^2 N},  \label{s12-lower-tail-5}              
\end{align} where
\begin{itemize}
\item \eqref{s12-lower-tail-1} holds by substituting $B,$ $\nu_{\max}$ and $j;$
\item \eqref{s12-lower-tail-1} to \eqref{s12-lower-tail-2} holds due to Lemma \ref{driftbound-l:s12};
\item \eqref{s12-lower-tail-2} to \eqref{s12-lower-tail-3} holds because $\frac{\mu_2}{\mu_{\max}} \leq \frac{\sqrt{N}}{\log N}$ for $N$ satisfying \eqref{N-cond} in the first term of \eqref{s12-lower-tail-3};
\item \eqref{s12-lower-tail-3} to \eqref{s12-lower-tail-4} holds by Lemma \ref{tailbound-l:s11} to obtain the tail bound in the second term of \eqref{s12-lower-tail-4}.
\end{itemize}

Recall $\frac{p\mu_1}{\mu_2}+1= \mu_1$ and the proof is completed.


\subsection{Proof of Lemma \ref{SSC:s1 s2}: SSC on $S_1$ and $\sum_{i=2}^{b}S_i.$}
Define $L_{1,1} = \frac{\lambda}{\mu_1} - \frac{\log N}{\sqrt{N}}$ and $L_{1,2} = \frac{p\lambda}{\mu_2}- \frac{\mu_1\log N}{\sqrt{N}}.$ Recall $w_u = \max((1-p)\mu_1, \mu_2),$ $w_l = \min((1-p)\mu_1, \mu_2),$ $k = \left(1+\frac{w_u b}{w_l}\right)\left(\frac{1+\mu_1+\mu_2}{w_l}+2\mu_1\right)$ and $c_1 = \frac{w_u b}{w_l}\left(\frac{1+\mu_1+\mu_2}{w_l}+2\mu_1\right)+2\mu_1.$

\begin{lemma}\label{driftbound:SSCp=0:s1 s2}
Consider Lyapunov function 
\begin{align}
V(s) = \min\left\{\lambda +\frac{k\log N}{\sqrt{N}}-s_1, \sum_{i=2}^b s_i\right\}. \label{Ly:SSC}
\end{align} 
We have
\begin{itemize}
\item $\nabla V(s) \leq -\frac{w_u\mu_1\log N}{\sqrt{N}},$ when $$V(s) \geq \frac{c_1\log N}{\sqrt{N}} ~\text{with} ~s_{1,1} \geq L_{1,1} ~\text{and}~ s_{1,2} \geq L_{1,2};$$
\item $\nabla V(s) \leq w_{u},$ when $$V(s) \geq \frac{c_1\log N}{\sqrt{N}} ~\text{with} ~s_{1,1} \leq L_{1,1} ~\text{or}~ s_{1,2} \leq L_{1,2}.$$
\end{itemize}
\end{lemma}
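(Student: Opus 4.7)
The plan is to exploit the fact that for $V(s) = \min\{A(s), B(s)\}$ with $A(s) = \lambda + \tfrac{k \log N}{\sqrt{N}} - s_1$ and $B(s) = \sum_{i=2}^{b} s_i$, the drift satisfies $\nabla V(s) \leq \nabla A(s)$ whenever $V(s) = A(s)$ and $\nabla V(s) \leq \nabla B(s)$ whenever $V(s) = B(s)$ (this follows from $\min\{A(s'),B(s')\} - V(s) \leq A(s')-A(s)$ in the first case and symmetrically in the second). Hence it suffices to upper bound the two drifts separately under each regime.

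Reading off Events 1--5 in the generator, the only events that change $s_1 = s_{1,1} + s_{1,2}$ are arrivals routed to idle servers and the two departure events at queues of length $1$; the phase-1-to-2 swap leaves $s_1$ invariant. Similarly, $\sum_{i\geq 2} s_i$ changes only through arrivals routed to non-empty non-full servers and through departures from queues of length $\geq 2$. This yields
\[
\nabla A(s) = -\lambda(1-A_1(s)) + (1-p)\mu_1(s_{1,1} - s_{2,1}) + \mu_2(s_{1,2} - s_{2,2}),
\]
\[
\nabla B(s) = \lambda(A_1(s) - A_b(s)) - (1-p)\mu_1 s_{2,1} - \mu_2 s_{2,2}.
\]
The hypothesis $V(s) \geq \tfrac{c_1 \log N}{\sqrt{N}}$ forces both $A(s)$ and $B(s)$ above this threshold. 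Since $k - c_1 = \tfrac{1+\mu_1+\mu_2}{w_l}$, the first gives $s_1 \leq \lambda + \tfrac{(1+\mu_1+\mu_2) \log N}{w_l \sqrt{N}}$, placing the state in the regime where any $\pi\in \Pi$ satisfies $A_1(s) \leq \tfrac{1}{\sqrt{N}}$. The second, combined with $s_i \leq s_2$ for $i \geq 2$, gives $s_2 \geq \tfrac{c_1 \log N}{b\sqrt{N}}$, hence $(1-p)\mu_1 s_{2,1} + \mu_2 s_{2,2} \geq w_l s_2 \geq \tfrac{w_l c_1 \log N}{b\sqrt{N}}$.

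For the first bullet, the $\nabla B$ bound is immediate: $\nabla B(s) \leq \tfrac{1}{\sqrt{N}} - \tfrac{w_l c_1 \log N}{b\sqrt{N}}$, and expanding $\tfrac{w_l c_1}{b} = \tfrac{w_u(1+\mu_1+\mu_2)}{w_l} + 2w_u \mu_1 + \tfrac{2 w_l \mu_1}{b}$ shows this is at most $-\tfrac{w_u \mu_1 \log N}{\sqrt{N}}$. For $\nabla A$, the algebraic pivot is the Coxian-2 identity $\tfrac{1}{\mu_1} + \tfrac{p}{\mu_2} = 1$, which gives $(1-p)\mu_1 + \mu_1\mu_2 = \mu_1 + \mu_2$ and therefore $(1-p)\mu_1 L_{1,1} + \mu_2 L_{1,2} = \lambda - (\mu_1+\mu_2)\tfrac{\log N}{\sqrt{N}}$. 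Writing $s_{1,m} = L_{1,m} + r_m$ with $r_m \geq 0$ and $r_1 + r_2 = s_1 - (L_{1,1} + L_{1,2}) \leq \bigl[\tfrac{1+\mu_1+\mu_2}{w_l} + 1 + \mu_1\bigr]\tfrac{\log N}{\sqrt{N}}$, I bound
\[
(1-p)\mu_1 s_{1,1} + \mu_2 s_{1,2} \leq \lambda + \Bigl[-(\mu_1+\mu_2) + w_u\Bigl(\tfrac{1+\mu_1+\mu_2}{w_l} + 1 + \mu_1\Bigr)\Bigr]\tfrac{\log N}{\sqrt{N}}.
\]
Substituting back into $\nabla A(s)$, the $\tfrac{w_u(1+\mu_1+\mu_2)}{w_l}$ contribution cancels exactly with the corresponding piece of $\tfrac{w_l c_1}{b}$, leaving the coefficient of $\tfrac{\log N}{\sqrt{N}}$ equal to $-(\mu_1+\mu_2) + w_u - w_u \mu_1 - \tfrac{2 w_l \mu_1}{b}$. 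Since $w_u \leq \max(\mu_1,\mu_2) \leq \mu_1+\mu_2$, this is at most $-w_u \mu_1 - \tfrac{2 w_l \mu_1}{b}$, and the residual $-\tfrac{2w_l\mu_1\log N}{b\sqrt{N}}$ slack absorbs the $\tfrac{1}{\sqrt{N}}$ contribution from $\lambda A_1(s)$.

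For the second bullet, coarse bounds suffice: $\nabla A(s) \leq (1-p)\mu_1 s_{1,1} + \mu_2 s_{1,2} \leq w_u s_1 \leq w_u$, and $\nabla B(s) \leq \lambda \leq 1 \leq w_u$. The final inequality uses $w_u \geq 1$, which follows by contradiction from $\tfrac{1}{\mu_1} + \tfrac{p}{\mu_2} = 1$: if both $(1-p)\mu_1 < 1$ and $\mu_2 < 1$, then $\tfrac{1}{\mu_1} + \tfrac{p}{\mu_2} > (1-p) + p = 1$. The main obstacle is precisely the constant tracking in the $\nabla A$ bound: the definition of $c_1$ is engineered so that the $s_2$-departure lower bound overpowers both the arrivals and the excess $(1-p)\mu_1 r_1 + \mu_2 r_2$ arising from $s_{1,m}$ exceeding its equilibrium $L_{1,m}$, and the Coxian-2 relation is exactly the identity that forces the cancellation to work out with precisely the slack $w_u\mu_1$ declared in the lemma.
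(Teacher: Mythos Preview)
Your overall strategy matches the paper's exactly: split on which branch of the minimum is active, compute $\nabla A$ and $\nabla B$ from the generator, and use $V(s)\geq \tfrac{c_1\log N}{\sqrt N}$ to force both $s_1\leq \lambda+\tfrac{(1+\mu_1+\mu_2)\log N}{w_l\sqrt N}$ (so $A_1(s)\leq N^{-1/2}$) and $(1-p)\mu_1 s_{2,1}+\mu_2 s_{2,2}\geq \tfrac{w_lc_1\log N}{b\sqrt N}$. Your drift expressions for $A$ and $B$ are correct, your use of the Coxian-2 identity $(1-p)\mu_1+\mu_1\mu_2=\mu_1+\mu_2$ is exactly the pivot the paper uses, and your second-bullet argument (including the proof that $w_u\geq 1$) is the same as the paper's.

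There is, however, one genuine gap in your first-bullet $\nabla A$ bound. After correctly arriving at the coefficient $-(\mu_1+\mu_2)+w_u-w_u\mu_1-\tfrac{2w_l\mu_1}{b}$, you discard the piece $-(\mu_1+\mu_2)+w_u\leq 0$ and then claim that the remaining slack $\tfrac{2w_l\mu_1\log N}{b\sqrt N}$ absorbs the $\tfrac{1}{\sqrt N}$ from $\lambda A_1(s)$. That would require $2w_l\mu_1\log N\geq b$, which is \emph{not} implied by the standing hypotheses: the buffer size $b$ is allowed to grow with $N$ (indeed $k=\Theta(b)$, and the corollary permits $b$ up to order $N^{0.5-\alpha}/\log N$), so this inequality fails in the regime of interest. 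The fix is to keep the term you threw away: since $\mu_1+\mu_2-w_u\geq \min\{\mu_1,\mu_2\}$ and the paper's condition on $N$ ensures $(\mu_1+\mu_2-w_u)\log N\geq 1$, the piece $-\bigl[(\mu_1+\mu_2)-w_u\bigr]\tfrac{\log N}{\sqrt N}$ is what actually cancels the $\tfrac{1}{\sqrt N}$, after which the remaining $-w_u\mu_1\tfrac{\log N}{\sqrt N}$ (with an additional $-\tfrac{2w_l\mu_1}{b}\tfrac{\log N}{\sqrt N}$ to spare) gives the stated drift bound. This is precisely how the paper closes the estimate.
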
 
\begin{proof}
When $V(s) \geq \frac{c_1\log N}{\sqrt{N}},$ the following two inequalities hold
\begin{align}
             &s_1 \leq \lambda + \frac{(k-c_1)\log N}{\sqrt{N}} = \lambda + \frac{1+\mu_1+\mu_2}{w_l}\frac{\log N}{\sqrt{N}}, \label{SSCp=0:s1}\\
&\sum_{i=2}^b s_i \geq \frac{c_1\log N}{\sqrt{N}}. \label{SSCp=0:s2}
\end{align}

We have two observations based on \eqref{SSCp=0:s1} and \eqref{SSCp=0:s2}:
\begin{itemize}
\item \eqref{SSCp=0:s1} implies $A_1(s) \leq \frac{1}{\sqrt{N}}$ under any policy in $\Pi$;  
\item \eqref{SSCp=0:s2} implies $s_2 \geq \frac{c_1}{b}\frac{\log N}{\sqrt{N}}$ because  $s_2 \geq s_3 \geq \cdots \geq s_b,$ and we have
\end{itemize}
\begin{align}
(1-p)\mu_1 s_{2,1} + \mu_2 s_{2,2} \geq w_l s_2 \geq \frac{w_lc_1}{b}\frac{\log N}{\sqrt{N}}, \label{p=0s21-s22-lowerbound}
\end{align}
where a finite buffer size is required such that the lower bound $w_l s_2 \geq \frac{w_lc_1}{b}\frac{\log N}{\sqrt{N}}$ is meaningful.

We study the Lyapunov drift and consider two cases:
\begin{itemize}
\item Supppose $\lambda +\frac{k\log N}{\sqrt{N}}-s_1 \geq \sum_{i=2}^b s_i \geq \frac{c_1\log N}{\sqrt{N}}.$ In this case, $V(s) = \sum_{i=2}^b s_i,$ and
\begin{align}
\nabla V(s) \leq& \lambda {(A_1(s)-A_b(s))}- (1- p)\mu_1 s_{2,1} -\mu_2 s_{2,2} \label{SSCp=0-case1-1}\\
            \leq& {\frac{1}{\sqrt{N}}}- (1- p)\mu_1 s_{2,1} -\mu_2 s_{2,2} \label{SSCp=0-case1-2}\\
            \leq&  {\frac{1}{\sqrt{N}}}-\frac{w_lc_1}{b}\frac{\log N}{\sqrt{N}} \label{SSCp=0-case1-3}\\
            \leq&  {\frac{1}{\sqrt{N}}}-\frac{2w_u \mu_1\log N}{\sqrt{N}} \label{SSCp=0-case1-4}\\
            \leq&  -\frac{w_u \mu_1\log N}{\sqrt{N}},
\end{align}  where
\begin{itemize}
\item \eqref{SSCp=0-case1-1} to \eqref{SSCp=0-case1-2} holds because {$A_1(s) \leq \frac{1}{\sqrt{N}}$ under any policy in $\Pi;$}
\item \eqref{SSCp=0-case1-2} to \eqref{SSCp=0-case1-3} holds because \eqref{p=0s21-s22-lowerbound};
\item \eqref{SSCp=0-case1-3} to \eqref{SSCp=0-case1-4} holds because $c_1 \geq \frac{w_u b}{w_l} 2\mu_1.$
\end{itemize}

\item Suppose $\sum_{i=2}^b s_i > \lambda +\frac{k\log N}{\sqrt{N}}-s_1 \geq \frac{c_1\log N}{\sqrt{N}}.$ In this case, $V(s) = \lambda +\frac{k\log N}{\sqrt{N}}-s_1,$ and
\begin{align}
&\nabla V(s) \\
\leq& -\lambda {(1-A_1(s))} + (1-p)\mu_1 s_{1,1} + \mu_2 s_{1,2} - (1- p)\mu_1 s_{2,1} - \mu_2 s_{2,2} \label{SSCp=0-case2-1}\\ 
            \leq& {\frac{1}{\sqrt{N}}}-\lambda + w_u s_1 - \left(w_u-(1-p)\mu_1\right) s_{1,1} - \left(w_u-\mu_2\right) s_{1,2} - (1- p)\mu_1 s_{2,1} - \mu_2 s_{2,2} \label{SSCp=0-case2-2}\\ 
       {\leq}& {\frac{1}{\sqrt{N}}} -\lambda + w_u (s_1-L_{1,1}-L_{1,2}) + \left((1-p)\mu_1L_{1,1} + \mu_2L_{1,2}\right)- (1- p)\mu_1 s_{2,1} - \mu_2 s_{2,2} \label{SSCp=0-case2-3}\\
       {=}& {\frac{1}{\sqrt{N}}} + \left(w_u (k-c_1 + 1 + \mu_1) - (1-p)\mu_1 - \mu_1\mu_2 \right)\frac{\log N}{\sqrt{N}}- (1- p)\mu_1 s_{2,1} - \mu_2 s_{2,2} \label{SSCp=0-case2-4}\\
       {\leq}& {\frac{1}{\sqrt{N}}} + \left(w_u (k-c_1 + 1 + \mu_1) - (1-p)\mu_1 - \mu_1\mu_2 \right)\frac{\log N}{\sqrt{N}} - \frac{w_lc_1}{b}\frac{\log N}{\sqrt{N}} \label{SSCp=0-case2-5}\\
       {=}& w_u\left( k-\left(1+\frac{w_l}{w_ub}\right)c_1 + \mu_1\right)\frac{\log N}{\sqrt{N}} +{\frac{1}{\sqrt{N}}}- \left((1-p)\mu_1 + \mu_1\mu_2 - w_u\right)\frac{\log N}{\sqrt{N}} \label{SSCp=0-case2-6}\\
       {\leq}& w_u\left( k-\left(1+\frac{w_l}{w_ub}\right)c_1 + \mu_1\right)\frac{\log N}{\sqrt{N}} \label{SSCp=0-case2-7}\\
       {\leq}& -\frac{w_u \mu_1\log N}{\sqrt{N}}, \label{SSCp=0-case2-8}
\end{align} where
\begin{itemize}
\item \eqref{SSCp=0-case2-1} to \eqref{SSCp=0-case2-2} holds by adding and substructing $w_us_1 = w_u(s_{1,1}+s_{1,2});$
\item \eqref{SSCp=0-case2-2} to \eqref{SSCp=0-case2-3} holds because $s_{1,1}$ and $s_{1,2}$ taking the lower bounds at $L_{1,1}$ and $L_{1,2}$ gives an upper bound;
\item \eqref{SSCp=0-case2-3} to \eqref{SSCp=0-case2-4} holds by substituting $L_{1,1} = \frac{\lambda}{\mu_1} - \frac{\log N}{\sqrt{N}},$  $L_{1,2} = \frac{p\lambda}{\mu_2}- \frac{\mu_1\log N}{\sqrt{N}}$ and $s_1 \leq \lambda + \frac{(k-c_1)\log N}{\sqrt{N}}.$ We have $s_1-L_{1,1}-L_{1,2} = (k-c_1+1 + \mu_1)\frac{\log N}{\sqrt{N}}$ and $(1-p)\mu_1L_{1,1} + \mu_2L_{1,2} = \lambda - \left((1-p)\mu_1 + \mu_1\mu_2\right)\frac{\log N}{\sqrt{N}}.$
\item \eqref{SSCp=0-case2-4} to \eqref{SSCp=0-case2-5} holds by substituting the lower bound of $(1- p)\mu_1 s_{2,1} + \mu_2 s_{2,2}$ in \eqref{p=0s21-s22-lowerbound};
\item \eqref{SSCp=0-case2-5} to \eqref{SSCp=0-case2-6} holds by combining the terms with $c_1;$ 
\item \eqref{SSCp=0-case2-6} to \eqref{SSCp=0-case2-7} holds because {$((1-p)\mu_1 + \mu_1\mu_2 - w_u)\log N = (\mu_1 + \mu_2 - w_u)\log N \geq 1$ when $N$ satisfies \eqref{N-cond};}
\item \eqref{SSCp=0-case2-7} to \eqref{SSCp=0-case2-8} holds because $k -\left(1+\frac{w_l}{w_ub}\right)c_1 \leq - 2\mu_1.$
\end{itemize}
\end{itemize}

Next, we show $\nabla V(s) \leq w_{u}$ based on the upper bounds \eqref{SSCp=0-case1-1} and \eqref{SSCp=0-case2-1}.
\begin{itemize}
\item Consider the upper bound in \eqref{SSCp=0-case1-1}. We have
\begin{align}
\nabla V(s) \leq \lambda (A_1(s)-A_b(s)) - (1- p)\mu_1 s_{2,1} -\mu_2 s_{2,2} \leq 1 \leq w_{u}, \nonumber
\end{align}
where $1 \leq w_{u}$ holds because $\frac{1}{\mu_1} + \frac{p}{\mu_2}=1.$
\item Consider the upper bound in \eqref{SSCp=0-case2-1}. We have
\begin{align}
\nabla V(s) \leq& -\lambda (1-A_1(s)) + (1-p)\mu_1 s_{1,1} + \mu_2 s_{1,2} - (1- p)\mu_1 s_{2,1} - \mu_2 s_{2,2} \nonumber\\
\leq& (1-p)\mu_1 s_{1,1} + \mu_2 s_{1,2} \leq w_{u}, \nonumber
\end{align}
where the last inequality holds because $s_{1,1}+s_{1,2} = s_1 \leq 1.$
\end{itemize}

\end{proof}

Let $\mathcal E = \left\{s ~|~ s_{1,1} \geq L_{1,1}, ~s_{1,2} \geq L_{1,2}\right\}.$ We have $V(s) = \min\left\{\lambda +\frac{k\log N}{\sqrt{N}}-s_1, \sum_{i=2}^{b}s_i\right\}$ satisfying the following two conditions based on Lemma \ref{driftbound:SSCp=0:s1 s2}:
\begin{itemize}
\item $\nabla V(s) \leq -\frac{w_u\mu_1\log N}{\sqrt{N}}$ when $V(s) \geq \frac{c_1\log N}{\sqrt{N}}$ and $s \in \mathcal E.$
\item $\nabla V(s) \leq w_u$ when $V(s) \geq \frac{c_1\log N}{\sqrt{N}}$ and $s \notin \mathcal E.$
\end{itemize}
Define  $B = \frac{c_1\log N}{\sqrt{N}},$ $\gamma = \frac{w_u\mu_1\log N}{\sqrt{N}}$ and $\delta = w_u.$ Combining $q_{\max} = \mu_{\max}N$ and $\nu_{\max} = \frac{1}{N},$ we have $$\alpha \leq \frac{1}{1 + \frac{w_u\mu_1\log N}{\mu_{\max}\sqrt{N}}} ~~\text{and}~~ \beta = \frac{\sqrt{N}}{\mu_1\log N} + 1.$$ 

Based on Lemma \ref{tail-bound-cond} with $j=\frac{\mu_1\sqrt{N}\log N}{2},$ we have
\begin{align}
&\mathbb P\left(V(S) \geq B + 2 \nu_{\max} j\right) \\
=& \mathbb P\left(V(S) \geq \frac{c_1\log N}{\sqrt{N}} + \frac{\mu_1\log N}{\sqrt{N}} \right) \label{SSC1p=0-tail-1}\\
\leq& \left(\frac{1}{1 + \frac{w_u\mu_1\log N}{\mu_{\max}\sqrt{N}}}\right)^{\frac{\mu_1\sqrt{N}\log N}{2}} + \left(\frac{\sqrt{N}}{\mu_1\log N} + 1\right) \mathbb P\left(s \notin \mathcal E\right) \label{SSC1p=0-tail-2}\\
\leq& \left(1 - \frac{w_u\mu_1}{2\mu_{\max}}\frac{\log N}{\sqrt{N}}\right)^{\frac{\mu_1\sqrt{N}\log N}{2}} + \left(\frac{\sqrt{N}}{\mu_1\log N} + 1\right) \mathbb P\left(s \notin \mathcal E\right) \label{SSC1p=0-tail-3}\\
\leq& e^{-\frac{w_u\mu_1^2\log^2 N}{4\mu_{\max}}} + \left(\frac{\sqrt{N}}{\mu_1\log N} + 1\right)\frac{32}{\mu_1\mu_2} \frac{N}{\log^2 N} e^{-\min\left(\frac{\mu_1}{16\mu_{\max}},\frac{\mu_2}{12\mu_{\max}},\frac{\mu_1\mu_2}{40\mu_{\max}}\right)\log^2 N} \label{SSC1p=0-tail-4}\\
\leq& \frac{34}{\mu_1^2\mu_2} \frac{N^{1.5}}{\log^3 N} e^{-\min\left(\frac{\mu_1}{16\mu_{\max}},\frac{\mu_2}{12\mu_{\max}},\frac{\mu_1\mu_2}{40\mu_{\max}}\right)\log^2 N}, \label{SSC1p=0-tail-5}
\end{align}  where
\begin{itemize}
\item \eqref{SSC1p=0-tail-1} holds holds by substituting $B,$ $\nu_{\max}$ and $j;$
\item \eqref{SSC1p=0-tail-1} to \eqref{SSC1p=0-tail-2} holds based on Lemma \ref{driftbound:SSCp=0:s1 s2};
\item \eqref{SSC1p=0-tail-2} to \eqref{SSC1p=0-tail-3} holds $\frac{w_u\mu_1}{\mu_{\max}} \leq \frac{\sqrt{N}}{\log N}$ for a large $N$ for the first term in \eqref{SSC1p=0-tail-3};
\item \eqref{SSC1p=0-tail-3} to \eqref{SSC1p=0-tail-4} holds by applying the union bound on $\mathbb P\left(S \notin \mathcal E\right)$ such that  
\begin{align*}
\mathbb P\left(s \notin \mathcal E\right) \leq& \mathbb P\left(s_{1,1} < L_{1,1}\right)+ \mathbb P\left(s_{1,2} < L_{1,2}\right) \\
\leq& \frac{32}{\mu_1\mu_2} \frac{N}{\log^2 N} e^{-\min\left(\frac{\mu_1}{16\mu_{\max}},\frac{\mu_2}{12\mu_{\max}},\frac{\mu_1\mu_2}{40\mu_{\max}}\right)\log^2 N}.
\end{align*}
\end{itemize}

\section{Proof of the Corollary} \label{sec:0-delay}
Under JSQ, a job is discarded or blocked only if all buffers are full, i.e. when $N\sum_{i=1}^b S_i= Nb.$  From Theorem \ref{Thm:main}, we have
\begin{align}
\mathbb P(\mathcal B) = &\mathbb P\left(N\sum_{i=1}^b S_i=Nb\right)=\mathbb P\left(\sum_{i=1}^b S_i\geq b\right)\\
\leq &\mathbb P\left(\max\left\{\sum_{i=1}^b S_i-\lambda -\frac{k\log N}{\sqrt{N}}, 0\right\}\geq b-\lambda -\frac{k\log N}{\sqrt{N}}\right) \label{markov-1}\\
{\leq}&\frac{\mathbb E\left[\max\left\{\sum_{i=1}^b S_i-\lambda -\frac{k\log N}{\sqrt{N}}, 0\right\}\right]}{b-\lambda - \frac{k\log N}{\sqrt{N}}}\label{markov-2}\\
{\leq}&{\frac{8\mu_{\max}}{b-\lambda}\frac{1}{\sqrt{N}\log N}} \label{markov-3}
\end{align}
where \eqref{markov-1} to \eqref{markov-2} holds due to the Markov inequality; and \eqref{markov-2} to \eqref{markov-3} holds because of Theorem \ref{Thm:main} and $b-\lambda \geq \frac{8k\log N}{\sqrt{N}}$.

For jobs that are not discarded, the average queueing delay according to Little's law is
$$\frac{\mathbb E\left[\sum_{i=1}^bS_i\right]}{\lambda(1-\mathbb P(\mathcal B))}.$$ Therefore, the average waiting time is
\begin{align*}
\mathbb E[W]=&\frac{\mathbb E\left[\sum_{i=1}^bS_i\right]}{\lambda(1-\mathbb P(\mathcal B)))}-1 \\
\leq& \frac{\frac{k\log N}{\sqrt{N}}+\frac{7\mu_{\max}}{\sqrt{N}\log N}+\lambda \mathbb P(\mathcal B)}{\lambda(1-\mathbb P(\mathcal B))} \\
\leq& \frac{2k\log N}{\sqrt{N}} + \frac{14\mu_{\max}+\frac{16\mu_{\max}}{b-\lambda}}{\sqrt{N}\log N},
\end{align*}
where the last inequality holds because $\lambda(1-\mathbb P(\mathcal B)) \geq 0.5$ under $b-\lambda \geq \frac{8k\log N}{\sqrt{N}}.$

Next, we study the waiting probability $\mathbb P(\mathcal W)$. Define $\overline{\mathcal W}$ to be the event that a job entered into the system (not blocked) and waited in the buffer and $\mathbb P(\overline{\mathcal W})$ is the steady-state probability of $\overline{\mathcal W}$. {Applying Little's law to the jobs waiting in the buffer, $$\lambda \mathbb P(\overline{\mathcal W}) \mathbb E[T_{Q}] = \mathbb E\left[\sum_{i=2}^b S_i\right],$$ where $T_{Q}$ is the waiting time for the jobs waiting in the buffer.}
Since $\mathbb E[T_{Q}]$ is lower bounded by $\overline T_{Q} = \min \left\{\frac{1}{\mu_1}, \frac{1}{\mu_2}\right\},$ we have
$$\mathbb P(\overline{\mathcal W}) \leq \frac{\mathbb E\left[\sum_{i=2}^b S_i\right]}{\lambda \overline T_{Q}}.$$

We now provide a bound on $\mathbb E\left[\sum_{i=2}^b S_i\right]$.
From the work-conserving law, we have 
$$\mathbb E[S_1]=\lambda(1-\mathbb P(\mathcal B))\geq \lambda\left(1-\frac{8\mu_{\max}}{b-\lambda}\frac{1}{\sqrt{N}\log N}\right).$$ Therefore, we have
$$\mathbb E[S_1]\ge \lambda - \frac{8\mu_{\max}}{b-\lambda}\frac{1}{\sqrt{N}\log N}.$$ From Theorem \ref{Thm:main}, one has
$$\mathbb E\left[\sum_{i=1}^b {S}_i\right]\leq \lambda + \frac{k\log N}{\sqrt{N}}+\frac{7\mu_{\max}}{\sqrt{N}\log N}.$$ The above two inequalities give the following bound on $\mathbb E\left[\sum_{i=2}^b S_i\right]$:  $$\mathbb E\left[\sum_{i=2}^b S_i\right] \leq \frac{k\log N}{\sqrt{N}} + \frac{7\mu_{\max}+\frac{8\mu_{\max}}{b-\lambda}}{\sqrt{N}\log N}.$$

Finally, a job not routed to an idle server is either blocked or waited in the buffer
{$$\mathbb P(\mathcal W)=  \mathbb P(\mathcal B)+\mathbb P(\overline{\mathcal W}) \leq \mathbb P(\mathcal B)+\frac{\mathbb E\left[\sum_{i=2}^b S_i\right]}{\lambda \overline T_{Q}} \leq \frac{1}{\lambda \overline T_{Q}}\frac{k\log N}{\sqrt{N}} + \frac{1}{\lambda\overline T_{Q}} \frac{7\mu_{\max}+\frac{8\mu_{\max}}{b-\lambda}}{\sqrt{N}\log N}.$$}

The analysis for Po$d$ is similar, except that
\begin{align}
\mathbb P(\mathcal B)=&\mathbb P\left(\mathcal B\left|S_b\leq 1-\frac{1}{\mu_1 N^{\alpha}}\right.\right)\mathbb P\left(S_b\leq 1-\frac{1}{\mu_1 N^{\alpha}}\right)\\
&+\mathbb P\left(\mathcal B\left|S_b> 1-\frac{1}{\mu_1 N^{\alpha}}\right.\right)\mathbb P\left(S_b> 1-\frac{1}{\mu_1 N^{\alpha}}\right)\\
\leq&\mathbb P\left(\mathcal B\left|S_b\leq 1-\frac{1}{\mu_1 N^{\alpha}}\right.\right)+\mathbb P\left(S_b> 1-\frac{1}{\mu_1 N^{\alpha}}\right)\label{pod-block-1}\\
\leq&\left(1-\frac{1}{\mu_1 N^{\alpha}}\right)^{\mu_1 N^\alpha \log N}+\mathbb P\left(\sum_{i=1}^b S_i>b-\frac{b}{\mu_1 N^{\alpha}}\right)\label{pod-block-2}\\
\leq& \frac{1}{N} + \frac{\mathbb E\left[\max\left\{\sum_{i=1}^b S_i-\lambda - \frac{k\log N}{\sqrt{N}}, 0\right\}\right]}{b-\lambda -\frac{k\log N}{\sqrt{N}}-\frac{b}{\mu_1 N^{\alpha}}}. \label{pod-block-3}\\
\leq& \frac{1}{N} + \frac{8\mu_{\max}}{b-\lambda} \frac{1}{\sqrt{N}\log N}. \label{pod-block-4}
\end{align} 
\eqref{pod-block-1} to \eqref{pod-block-2} holds because it denotes the probability of the event all sampled $d$ servers have $b$ jobs; \eqref{pod-block-2} to \eqref{pod-block-3} holds because $(1-\frac{1}{x})^x\leq \frac{1}{e}$ for $x\geq 1$ and the Markov inequality; \eqref{pod-block-3} to \eqref{pod-block-4} holds because of Theorem \ref{Thm:main} and $b-\lambda \geq \frac{8k\log N}{\sqrt{N}}+\frac{8b}{\mu_1N^\alpha}.$
The remaining analysis is the same.

Finally, for JIQ and I1F, we have not been able to bound $\mathbb P(\mathcal B).$ However,
\begin{align}
\mathbb P(\mathcal W)=&\mathbb P\left(S_1=1\right)\leq \mathbb P\left(\sum_{i=1}^bS_i\geq 1\right) \\
\leq& \mathbb P\left(\max\left\{\sum_{i=1}^bS_i-\lambda -\frac{k\log N}{\sqrt{N}}, 0\right\}\geq \frac{1}{N^{\alpha}}-\frac{k\log N}{\sqrt{N}}\right) \label{JIQ-zero-1}\\
\leq& \frac{\mathbb E \left[ \max\left\{\sum_{i=1}^bS_i-\lambda -\frac{k\log N}{\sqrt{N}}, 0\right\} \right] }{\frac{1}{N^{\alpha}}-\frac{k\log N}{\sqrt{N}}}\label{JIQ-zero-2}\\
\leq&\frac{\mathbb E \left[ \max\left\{\sum_{i=1}^bS_i-\lambda -\frac{k\log N}{\sqrt{N}}, 0\right\}\right]}{\frac{1}{2N^{\alpha}}} \label{JIQ-zero-3}\\
\leq& \frac{14\mu_{\max}}{N^{0.5-\alpha} \log N} \label{JIQ-zero-4}.
\end{align}
\eqref{JIQ-zero-1}-\eqref{JIQ-zero-2} holds because of the Markov inequality; \eqref{JIQ-zero-2}-\eqref{JIQ-zero-3} holds because $2k \leq \frac{N^{0.5-\alpha}}{\log N};$
\eqref{JIQ-zero-3}-\eqref{JIQ-zero-4} holds because of Theorem \ref{Thm:main}.
Given the choice of $k =\left(1+\frac{w_u b}{w_l}\right)\left(\frac{1+\mu_1+\mu_2}{w_l}+2\mu_1\right)$ in Theorem \ref{Thm:main}, we need the buffer size $b$ to be at the same order, which leads to the finite-buffer assumption.

\end{document}